\newtheorem{theorem}{Theorem}
\newtheorem{lemma}{Lemma}
\newtheorem{definition}{Definition}
\newtheorem{rem}{Remark}
\newtheorem{assumptionp}{Assumption}
\newcommand\f{\frac}
\newcommand{\G}{\mathcal{G}}
\newcommand{\svet}{\texttt{s}}
\newcommand{\ivet}{\texttt{i}}
\newcommand{\rvet}{\texttt{r}}
\newcommand{\evet}{\texttt{e}}
\newcommand\rr{{\mathbb R}}
\DeclareMathOperator*{\esssup}{ess\,sup}
\newcommand\nn{{\mathbb N}}
\newcommand{\vli}{{\boldsymbol{\imath}}}
\newcommand{\vone}{{\boldsymbol{1}}}
\newcommand{\vzero}{{\boldsymbol{0}}}
\DeclareMathOperator{\Diag}{Diag}
\newcommand{\vS}{{\bf S}}
\newcommand{\vE}{{\bf E}}
\newcommand{\vI}{{\bf I}}
\newcommand{\vR}{{\bf R}}
\newcommand{\vV}{{\bf V}}
\newcommand{\vs}{{\bf s}}
\newcommand{\ve}{{\bf e}}
\newcommand{\vr}{{\bf r}}
\begin{document}
\begin{frontmatter}
%%\small
%\author[1]{Simone Dovetta}%\email{simone.dovetta@uniroma1.it}
\author{Giovanni Naldi - UNIMI}%\email{giovanni.naldi@unimi.it}
\author{Giuseppe Patan\`{e} - CNR}%\email{patane@ge.imati.cnr.it}

%\affil[1]{UNIMI, Italy}
%\affil[2]{CNR-IMATI, Italy}

%\affil[1]{Department of Basic and Applied Sciences for Engineering, Via Antonio Scarpa, 16, 00161 Roma, Italy}
\address{UNIMI, Department of Environmental Science and Policy, via Celoria 2, 20133 Milano, Italy}
\address{CNR-IMATI, Via De Marini 6, 16149 Genova, Italy}
%\affil[4]{CNR-IMATI, via Ferrata 5, 27100 Pavia, Italy}
%
\title{A Graph-Based Modelling of Epidemics:\\ Properties, Simulation, and Continuum Limit}
\begin{abstract}
This work is concerned with epidemiological models defined on networks, which highlight the prominent role of the social contact network of a given population in the spread of infectious diseases. In particular, we address the modelling and analysis of very large networks. As a basic epidemiological model, we focus on a SEIR (Susceptible-Exposed-Infective-Removed) model governing the behaviour of infectious disease among a population of individuals, which is partitioned into sub-populations. We study the long-time behaviour of the dynamic for this model, also taking into account the heterogeneity of the infections and the social network. By relying on the theory of graphons, we address the natural question of the large population limit and investigate the behaviour of the model as the size of the network tends to infinitely. After establishing the existence and uniqueness of solutions to the selected models, we discuss the use of the graphon-based limit model as a generative model for a network with particular statistical properties related to the distribution of connections. We also provide some preliminary numerical tests.
\end{abstract}
\begin{keyword}
Coupled dynamical systems; SEIR model on graphs; spectral properties; continuum limit; graphon
\end{keyword}
\end{frontmatter}
\section{Introduction\label{sec:intro}}
The modelling of infectious disease transmission has a long history in mathematical biology, but in recent years it has gained an increasing influence on the theory and practice of disease management and control.~\cite{AM1991,Brauer2017,Dieketal2013,He2000,Keelingetal2008}. Most epidemiological models are compartmental models, with the population divided into classes and with assumptions about the rate of transfer from one class to another. In this paper we consider as a basic model the \emph{Susceptible-Exposed-Infectious-Removed} (SEIR) \emph{model} describing the disease transmission and the rate of infected individuals. In fact, the SEIR model, and some of its modifications, is widely considered in the literature and in applications as a starting point for describing the spread of various diseases, for example, tuberculosis~\cite{Tuber17}, measles~\cite{measles20}, MERS~\cite{Mers03}, and recently COVID-19~\cite{Covid19:21}. The basic idea of the SEIR model~\cite{He2000,Keelingetal2008} is to describe the number of infected and recovered individuals based on the number of contacts, probability of disease transmission, incubation period, recovery, and mortality rate. Since the model focuses on a very short time with respect to demographic dynamics, it postulates that births and natural (i.e., not connected with epidemics) deaths balance each other.

While, in some cases, the behaviour of average quantities of a (large) with respect to the time is sufficient to provide useful insight into the spread of the epidemics from the available data, the spatial component of many transmission systems has been recognised to be of pivotal importance. Due to this, spatially heterogeneous features must be included in the model to properly represent the transmission pattern. Then, a reasonable hypothesis about the phenomena may consider that the spatial aspects of transmission heavily influence the aggregation characteristic of the epidemic: we need hence to investigate data by using models that include such spatial connections. For example, the understanding of human mobility and the development of qualitative and quantitative theories is of key importance for the modelling and comprehension of human infectious disease dynamics, on geographical scales of different sizes. Also for the spread of infectious diseases in livestock comprehensive information on livestock movements, cattle movement, and contacts is required to devise appropriate disease control strategies. Understanding contact risk when herds mix extensively, and where different pathogens can be transmitted at different spatial and temporal scales, remains a major challenge~\cite{Livestock21}. For example, using data related to cattle movements and focusing on the geographical distribution of these movements is possible to improve the analysis of the spread of epizootic diseases~\cite{Epizotic15}.

To introduce spatial heterogeneity we consider metapopulation-based models, where the population is partitioned into large, spatially segregated sub-populations. A similar approach could be used in a more general way, irrespective of the biological interpretation: different ages, small interacting communities, and so on~\cite{Eubanketal2004,Vespignanietal2015,Newman2002}. This framework has traditionally provided an attractive approach to incorporating more realistic contact structures into epidemic models since it often preserves analytic tractability but also captures the most salient structural inhomogeneities in contact patterns. Understanding the dynamics of coupled systems on graphs modelling connectivity in real-life systems can be quite challenging. On the other hand, one is often interested in the analysis of average quantities over large networks~\cite{LS07}. For the analysis of the behaviour of the correspondent coupled dynamical systems, we consider the continuum limit where the dynamics on a sequence of large graphs (networks) is replaced by an evolution equation on a continuous spatial domain. Many nontrivial graph sequences have relatively simple limits, described by symmetric measurable functions on a unit square, called \emph{graphons}. In particular, information on the asymptotic distribution of social contacts can be encoded in the theory of graphons to obtain continuum models that approximate the dynamics over graphs with a large number of nodes.

The paper is organised as follows. In Sect.~\ref{sec:RELATED-WORK}, we briefly previous work on graphons and in Sect.~\ref{sec:PF}, we introduce the SEIR model on a graph and its main properties. Our main goal is to understand the interplay between the transmission of the disease and the distribution of contacts. Our model does not take into account some relevant factors, like the presence of asymptomatic individuals that have a prominent role in disease transmission. We point out that these model improvements could be analysed with methods similar to those introduced in this work. In Sect.~\ref{sec:spectra}, we estimate the spectrum of the weighted adjacency matrix involved in the dynamical model and show that the spectrum has a prominent role in the asymptotic behaviour of the epidemic. In Sect.~\ref{sec:graphon}, we derive the continuum limit of compartmental epidemiological models on graphs based on a suitable asymptotic procedure, which relies on the notion of graphon. Then, we investigate the relations between the solutions of the graphon-based asymptotics and the SEIR dynamical system on a suitable graph. Finally, we provide some numerical tests. In the numerical experiments and following the detailed analysis in~\cite{Beraudetal2015}, the number of social contacts in the population is described by a Gamma distribution. In Sect.~\ref{sec:majhead}, we discuss conclusions and future work.

\section{Basic facts about graphons\label{sec:RELATED-WORK}}
Graphon theory was introduced and developed in recent years by Lov\'{a}sz, Szegedy, Borgs, Chayes, S\'{o}s, and Vesztergombi among others~\cite{LO12,BCCZ18,BCCZ19,BCLSV06,BCLSV12}. A graph~$G=(V,E)$ is a set of nodes (vertices)~$V(G)\,=\,\{ 1,\, 2,\, \ldots ,\,n\}$, usually~$n=\vert V(G) \vert$ where~$\vert \cdot \vert~$ denotes the cardinality of a set, and a set of edges~$E(G)\subseteq V\times V$ between the vertices. In what follows the graphs will be simple, without loops or multiple edges, and finite unless otherwise specified. Weights (real numbers) will be given to the edges of a graph to make it an edge-weighted graph. Moreover, we assume that the graph is undirected, i.e., we identify the edges~$(i,j)$ and~$(j,i)$. Let~$\tilde A=(\tilde a_{ij})$ be the adjacency matrix of the graph:
\[
\tilde a_{ij}=\left\{
\begin{array}{l}
 1\,\,\,\text{if }(i,j)\text{ is an edge,}\\
 0\,\,\,\text{otherwise.}
\end{array}
\right.
\]
Let~$G$ and~$H$ be graphs, a map~$\phi$ from~$V(H)$ to~$V(G)$ is a homomorphism if it preserves edge adjacency, that is if for every edge~$(i,j)$ in~$E(H)$,~$(\phi (i),\, \phi(j))$ is an edge in~$E(G)$. Denote by~$hom(H,G)$ the number of homomorphisms from~$H$ to~$G$. For example for~$H$ with one node and no edge~$hom(H,G)=|V(G)|$, instead when~$H$ has two nodes and one edge~$hom(H,G)=|E(G)|$, or when~$H$ has three vertices and~$E(H)=\{(1,2),\, (2,3),\,(3,1)\}$,~$hom(H,G)$ is~$6$ times the number of triangles in~$G$. Normalising by the total number of possible maps, we get the density of homomorphisms from~$H$ to~$G$, \mbox{$t(H,G)=\frac{hom(H,G)}{\vert V(G) \vert ^{\vert V((H)\vert }}$}. It is defined that a sequence of simple graphs~$\{G_n\}_{n\in\mathbb{N}}$ is convergent if~$G_n$ become more and more similar as~$n$ goes to infinity.
\begin{definition} \label{left_conv}
A sequence of graphs~$\{G_n\}_{n\in\mathbb{N}}$ is said to be left convergent if the sequences~$t(H,G_n)$ converges for~$n\rightarrow\infty$, for every simple graph~$H$.
\end{definition}
The breakthrough results of Lov\'{a}sz and Szegedy~\cite{LO12} is that a (left) convergent undirected graph sequence has a limit object, which can be represented as a measurable function.
\begin{definition} \label{def:graphon}
A {\it graphon} is a bounded measurable functions~$W:\,[0,1]^2 \rightarrow \mathbb{R}$ that satisfy~$W(x,y)=W(y,x)$ for all~$x,\,y\in [0,1]$.
\end{definition}
Let~$\cal{W}$ denote the space of all the graphon and~$\cal{W}_0$ is the set of graphons with values in~$[0,1]$. In our models, we will consider, due to the meaning of the connections between different sub-population only graphons in~$\cal{W}_0$. We give below the representation Theorem of Lov\'asz and Szegedy~\cite{LS06}.
\begin{theorem}\label{Teo:LS}
For every (left) convergent sequence of simple graphs, there is~$W\in \cal{W}_0$ such that
\begin{equation}\label{eq:repres}
t(H,G_n) \rightarrow t(H,W) := \int_{I^{\vert V(H)\vert}}\,\,\prod_{(i,j=\in E(H)}\,W(x_i,x_j)dx
\end{equation}
for every simple graph~$H$, where~$I=[0,1]$. Moreover, for every~$W\in\cal{W}_0$ there is a sequence of graphs~$\{ G_n\}_{\mathbb{N}}$ satisfying \eqref{eq:repres}.
\end{theorem}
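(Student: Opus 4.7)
The plan is to handle the two assertions separately: first that every left-convergent sequence of simple graphs has a graphon limit in $\mathcal{W}_0$, then conversely that every $W\in\mathcal{W}_0$ is the limit of some graph sequence. In both directions the key idea is to embed finite graphs isometrically into the space of graphons and to exploit a compactness property in a suitable metric, the \emph{cut distance} $d_\square$ induced by $\|U-V\|_\square=\sup_{S,T\subseteq [0,1]}\bigl|\int_{S\times T}(U-V)\,dx\,dy\bigr|$.

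For the forward direction, I would first associate to each graph $G_n$ on $n$ vertices its \emph{step-function graphon} $W_{G_n}$ obtained by partitioning $[0,1]$ into $n$ equal intervals $I_1,\dots,I_n$ and setting $W_{G_n}(x,y)=\tilde a_{ij}$ whenever $(x,y)\in I_i\times I_j$. A direct calculation shows $t(H,G_n)=t(H,W_{G_n})$ up to an error vanishing as $n\to\infty$ arising from the diagonal. The main ingredient is now Szemer\'edi's regularity lemma, which, reinterpreted in graphon language (Lov\'asz--Szegedy), yields compactness of $(\mathcal{W}_0,d_\square)$. Hence $\{W_{G_n}\}$ has a subsequence converging in cut distance to some $W\in\mathcal{W}_0$. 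Combined with the \emph{counting lemma}
\[
|t(H,U)-t(H,V)|\leq |E(H)|\cdot\|U-V\|_\square,
\]
this gives $t(H,W_{G_n})\to t(H,W)$ along the subsequence, and since the sequence $\{G_n\}$ is left convergent by hypothesis, every convergent subsequence of $t(H,G_n)$ must share the same limit, so the full sequence converges to $t(H,W)$ for every simple $H$.

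For the reverse implication, I would use the \emph{$W$-random graph} construction. Given $W\in\mathcal{W}_0$, sample $X_1,\dots,X_n$ independently and uniformly from $[0,1]$ and form the random simple graph $\mathbb{G}(n,W)$ on $n$ vertices in which $(i,j)$ is present with probability $W(X_i,X_j)$, independently over edges. Fubini's theorem immediately gives $\mathbb{E}[t(H,\mathbb{G}(n,W))]\to t(H,W)$, and an Azuma--Hoeffding (bounded-differences) argument applied to the martingale obtained by successively exposing the vertex labels $X_i$ and the edge variables yields exponential concentration around the expectation. Since the family of all finite simple graphs $H$ is countable, a standard Borel--Cantelli and diagonal-extraction argument produces, almost surely, a realization $\{G_n\}$ with $t(H,G_n)\to t(H,W)$ for \emph{every} $H$ simultaneously.

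The hard part will be the compactness of $(\mathcal{W}_0,d_\square)$, which underlies the forward direction and is essentially the content of the regularity lemma. The counting lemma itself is elementary (it follows from a telescoping argument on the product $\prod_{(i,j)\in E(H)}W(x_i,x_j)$), but the converse estimate, the so-called \emph{inverse counting lemma} that links convergence of homomorphism densities back to cut-distance convergence, is the deep step and is logically equivalent to regularity. Granted these two ingredients, the remaining verifications --- measurability and symmetry of the limit, passage from step-functions to a general graphon, and the almost-sure existence of the approximating sequence in the second part --- are routine.
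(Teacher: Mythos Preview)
The paper does not actually prove this theorem: it is quoted in Section~2 as background material, attributed to Lov\'asz and Szegedy~\cite{LS06}, and stated without proof. So there is no ``paper's own proof'' to compare against.

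That said, your sketch is correct and is exactly the standard Lov\'asz--Szegedy argument. The forward direction via the embedding $G_n\mapsto W_{G_n}$, compactness of $(\mathcal{W}_0,\delta_\square)$ (equivalently, the weak regularity lemma), and the counting lemma is the route taken in~\cite{LS06,LS07,LO12}; your observation that left convergence of $\{G_n\}$ forces all subsequential limits of $t(H,W_{G_n})$ to coincide, so that any cut-limit $W$ of a subsequence already serves as the desired representing graphon, is precisely how uniqueness issues are sidestepped at this stage. For the converse, the $W$-random graph construction together with bounded-differences concentration and Borel--Cantelli is again the standard proof; a small remark is that no diagonal extraction is needed, since almost-sure convergence for each fixed $H$ combined with countability of the set of simple graphs gives almost-sure simultaneous convergence directly. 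Your identification of the regularity-based compactness as the deep ingredient, with the counting lemma being an elementary telescoping estimate, is accurate.
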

Heuristically the intuition behind this definition is that the interval~$[0,1]$ represents a sort of continuum of vertices, serving as locations or indices, and~$W(x,y)$ denotes the probability of having an edge between vertices~$x$ and~$y$. It is possible to represent a graph~$G$ with a graphon from a suitable family of graphons. If~$\vert V(G) \vert=n$ and choosing any labelling of the nodes, we divide the interval~$[0,1]$ in~$n$ sub-intervals~$I_j^n$,~$j=1,2,\ldots, n$ as
\begin{equation}\label{def:partition}
I_1^n=\left[ 0,\frac{1}{n} \right];\,\, I_j^n=\left( \frac{j-1}{n},\,\frac{j}{n} \right]\,\,j=2,\ldots ,n;
\end{equation}
and define the piecewise constant graphon~$W_G$ by
\begin{equation}\label{eq:graph-on}
W_G(x,y)=a_{ij}\,\,\text{if}\,\, (x,y)\in I_i^n\times I_j^n.
\end{equation}
Then,~$W_G$ is a kind of functional representation of the adjacency matrix~$A$ of the graph~$G$. We point out that different labelling yields different graphon associated with the same graph.

It remains to formalise the convergence of the associated graphons to a graph and understand in what sense the space of graphons completes the space of finite graphs. To do this, we define a metric on the set of labelled graphons. In this work, we will use the set of graphons~$\cal{W}_0$ but the same definitions can be extended to the general graphon space~$\cal{W}$. By analogy with the matrix cut norm, we can define the cut norm of~$W\in \cal{W}_0$ by setting
\begin{equation}
		\label{cut1}
		\|W\|_{\Box}:=\sup_{S,T\subseteq[0,1]}\left\vert\int_{S\times T}W(x,y)\,dxdy\right\vert ,
	\end{equation}
where the supremum is taken over all measurable subsets~$S$ and~$T$. The notion of cut-norm was first introduced by Frieze and Kannan~\cite{FK99}, and its fundamental importance for the theory of graphons was recently unveiled in several papers, see in particular~\cite{BCCZ18,BCCZ19,BCLSV06,BCLSV08,BCLSV12} and the references therein. In particular, we point out that the notion of cut-norm for the graphons associated with a sequence of graphs is connected with the notion of the (left) convergence for graph sequences, see~\cite[Theorem 3.8]{BCLSV08}. Very loosely speaking, a sequence of graphs is left convergent if the local structure of the graphs is asymptotically stable, in the sense that asymptotically the graphs contain the ``same number of copies'' of any given subgraph. A drawback of the notion of the cut norm is related to the fact that for two graphs~$G$,~$G^\prime$ which are the same but with different node labelling the corresponding adjacency matrices are not the same. Then, we have~$\|W_G - W_{G^\prime}\|_{\Box} \neq 0$. This observation leads to the definition of the cut distance.
\begin{definition} \label{cut:distance}
Let~$\cal{M}$ the set of all measure-preserving bijections~$\phi : [0,1] \rightarrow [0,1]$, the cut distance between~$W,\, U\in\cal{W}$ is defined as
\begin{equation*}
\left\{
\begin{array}{l}
\delta_{\Box}(W,U )=\inf_{\phi ,\psi\in\cal{M}}\,\| W^\phi - U^\psi \|_{\Box},\\
W^\phi (x,y)=W(\phi (x),\phi (y)),\quad U^\psi (x,y)=W(\psi (x),\psi (y)).
\end{array}
\right.
\end{equation*}
\end{definition}
Since the cut metric of two different graphons can be zero, strictly speaking, it is not a metric. By identifying graphons~$U$ and~$W$ for which~$\delta_{\Box}(W,U )=0$, we can construct a metric space~$(\cal{W}_0,\delta_{\Box})$ which is compact under this identification~\cite{LS07}. For a sequence of graphons~$\{ W_n \}_{n\in \mathbb{N}}$ in~$\cal{W}_0$ it is possible to prove~\cite{BCLSV08} that the convergence of~$t(H,W_n)$, as~$n\rightarrow \infty$, for any simple graph~$H$ is equivalent to requesting that ~$\{ W_n \}_{n\in \mathbb{N}}$ is a Cauchy sequence for~$\delta_{\Box}$ or that exists~$W\in \cal{W}_0$ such that~$\delta_{\Box} (W_n,W) \rightarrow 0$. Using the association between simple graphs and graphons we have the following result~\cite{BCLSV08}.
\begin{theorem}\label{Teo:LS_conv}
Let~$\{G_n\}_{n\in\mathbb{N}}$ be any (left)convergence sequence of simple graphs. There exists a graphon~$W\in\cal{W}_0$ such that \mbox{$\lim_{n\rightarrow \infty}\, \delta_{\Box}(W_{G_n},W)=0$}. Conversely, every~$W\in\cal{W}_0$ is a limit in the cut metric of a convergent sequence of simple graphs.
\end{theorem}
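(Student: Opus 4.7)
The plan is to deduce the statement from two ingredients already cited in the excerpt: the compactness of $(\mathcal{W}_0,\delta_{\Box})$ under the identification $\delta_{\Box}(U,W)=0$, and the equivalence, valid for any graphon sequence $\{W_n\}\subset\mathcal{W}_0$, between convergence of every density $t(H,W_n)$, being Cauchy for $\delta_{\Box}$, and existence of a $\delta_{\Box}$-limit in $\mathcal{W}_0$.

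\textbf{Forward direction.} Given a left-convergent sequence $\{G_n\}_{n\in\mathbb{N}}$, I would first attach to each $G_n$ its canonical step-function graphon $W_{G_n}$ via the partition (\ref{def:partition}) and the formula (\ref{eq:graph-on}). A direct comparison between the sum defining $t(H,G_n)=hom(H,G_n)/|V(G_n)|^{|V(H)|}$ and the integral defining $t(H,W_{G_n})$ shows that the two differ by at most $O(|V(H)|^2/|V(G_n)|)$, the discrepancy being due to the non-injective maps $V(H)\to V(G_n)$, which correspond to the "diagonal" cells in the integral. Hence $t(H,W_{G_n})$ converges to the same limit as $t(H,G_n)$ for every simple $H$, and the quoted equivalence produces a $W\in\mathcal{W}_0$ with $\delta_{\Box}(W_{G_n},W)\to 0$.

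\textbf{Converse.} For a given $W\in\mathcal{W}_0$ the natural candidate sequence is furnished by the $W$-random graph model $\mathbb{G}(n,W)$: sample $x_1,\dots,x_n$ i.i.d.\ uniformly on $[0,1]$ and insert each edge $(i,j)$ independently with probability $W(x_i,x_j)$. Fixing a simple graph $H$, the quantity $t(H,\mathbb{G}(n,W))$ is a bounded function of the independent variables $(x_i)$ and of the $\binom{n}{2}$ independent edge indicators; a bounded-differences (Azuma/McDiarmid) inequality applied to this function controls its deviation from $\mathbb{E}[t(H,\mathbb{G}(n,W))]=t(H,W)+O(1/n)$, giving $t(H,\mathbb{G}(n,W))\to t(H,W)$ almost surely. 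Intersecting the full-measure events indexed by the countable set of isomorphism classes of finite $H$ yields a single event on which all homomorphism densities converge simultaneously; on any realisation in this event, the equivalence again produces a sequence of simple graphs $G_n$ with $\delta_{\Box}(W_{G_n},W)\to 0$.

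\textbf{Main obstacle.} Once the two inputs (compactness of $(\mathcal{W}_0,\delta_{\Box})$ and the density-versus-cut-distance equivalence) are taken as granted, the argument is essentially combinatorial bookkeeping. The deep content therefore lies in proving those inputs: compactness rests on a weak Szemer\'edi regularity lemma for graphons producing finite $\varepsilon$-nets of step functions, while the nontrivial half of the equivalence (density convergence $\Rightarrow$ Cauchy in $\delta_{\Box}$) combines that regularity lemma with the counting lemma $|t(H,U)-t(H,W)|\le |E(H)|\,\|U-W\|_{\Box}$. In a self-contained exposition the bulk of the effort would be devoted to these technical lemmas, after which both directions of the stated theorem fall out as outlined above.
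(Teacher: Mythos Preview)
The paper does not supply a proof of this theorem; it is quoted as a background result from \cite{BCLSV08}, immediately after the paper records (also without proof) the two facts you invoke: compactness of $(\mathcal{W}_0,\delta_{\Box})$ \cite{LS07} and the equivalence, for a sequence $\{W_n\}\subset\mathcal{W}_0$, between convergence of every $t(H,W_n)$, being $\delta_{\Box}$-Cauchy, and having a $\delta_{\Box}$-limit \cite{BCLSV08}. There is thus no in-paper argument to compare against.

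That said, your outline is a correct sketch of the standard proof in the cited literature, and the ingredients you isolate are exactly the ones the paper already takes for granted. Your diagnosis of the ``main obstacle'' is also accurate: the real work (the weak regularity lemma yielding compactness, and the counting lemma giving the nontrivial implication density convergence $\Rightarrow$ $\delta_{\Box}$-Cauchy) lives inside those two inputs; once they are accepted the theorem is a short deduction, which is presumably why the paper states it without proof. One small comment on the converse: your $W$-random construction via $\mathbb{G}(n,W)$ and bounded differences is the classical route and works, but note that Theorem~\ref{Teo:LS} (the Lov\'asz--Szegedy representation theorem) already asserts the existence of a graph sequence with $t(H,G_n)\to t(H,W)$, so in the logical structure of the paper one could simply quote that and then apply the density/$\delta_{\Box}$ equivalence, without redoing the sampling argument.
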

The limit in Theorem~\ref{Teo:LS_conv} is unique up to the identification of graphons with a cut distance equal to zero. Graphons are then the limit objects of converging graph sequences. We will construct and analyse the limit of the dynamical systems on a sequence of simple graphs by using the graphon structure. In particular, we characterise the asymptotic behaviour of the corresponding sequence of SEIR models defined on the simple graphs~$G_n$. Pioneering works about dynamical systems and graphon were done by G S. Medvedev for the Kurtamoto model~\cite{Med19} and nonlinear heat equation~\cite{Med14}. As compared to~\cite{Med14,Med19} we handle the time-dependent case, the biological characteristic of the epidemics and the contact dynamics can change with respect to time. Moreover, we consider not only a scalar dynamical system on the single node but a vectorial one and, from the technical hypothesis, we do not require that~$W\in L^2([0,1]^2)$. In recent literature it is possible to find related works using graphons to study epidemics spread, e.g.~\cite{SHUANG2019,VIZUETE2020,AURELI2022}.

\section{A prototype problem: SEIR model on graphs\label{sec:PF}}
When investigating the transmission of infectious diseases, the analysis of the average behaviour of a large population is sufficient to provide useful insight and extract valuable information from the input data. However, the importance of the spatial component of many transmission systems is being increasingly recognised~\cite{Vespignanietal2015}. The main approaches for spatial models concern different scales: an individual-based simulation, a meta-population model, or a network model. Individual-based models explicitly represent every individual and usually assume a variable probability that any infectious host can infect any other susceptible host. Then the model should be able to account for the states of all~$N$ individuals in the population in an independent manner, and at the same time, it should allow for arbitrary interactions among them. The analysis of these models is a difficult task the computational cost of numerical simulations is very onerous, and the extraction of the collective behaviours is very complex. Conversely, in the meta-population models the number of individuals at different space locations is in some states. These models often assume that each location is connected to others, with possible variable strengths of connection.

To describe a mathematical model for the spread of infectious disease, one has to make some assumptions about the disease transmission. We consider here, as a basic model, the SEIR compartmental model where individuals are classified into different population groups based on the infection status. The model tracks the number of people in each of the following categories: \emph{Susceptibles} (individuals that may become infected), \emph{Exposed} (individuals that have been infected with a pathogen, but due to the pathogen incubation period, are not yet infectious), \emph{Infectious} (individuals that are infected with a pathogen and may transmit it to others), and \emph{Recovered} (individual that is either no longer infectious or has been ``removed'' from the population). Indeed, we consider diseases with a latent phase during which the individual is infected but not yet infectious: a recent example of the application of this type of model is the description of the transmission of the COVID-19 disease~\cite{Heetal2020}.

In the scalar case, the total (initial) population,~$N$, is categorised into four classes, namely,~$s(t)$ (susceptible),~$e(t)$ (exposed),~$i(t)$ (infected-infectious), and~$r(t)$ (recovered), where~$t$ is the time variable. The basic assumption for the scalar model is the homogeneously mixing population hypothesis which, roughly speaking, means that a given infectious individual may transmit the disease to any susceptible individual at the same rate. Also, one postulates that all the individuals in the population have the same chances of interacting with each other. People move from \texttt{S} to \texttt{E} based on the number of contacts with \texttt{I} individuals, multiplied by the probability of infection~$\beta$, where~$\beta i(t)/N$ is the average number of contacts with infection per unit time of one susceptible person. The other processes taking place at time~$t$ are: the exposed \texttt{E} become infectious \texttt{I} with a rate~$\mu$ and the infectious recover \texttt{R} with a rate~$\gamma$. Recovered individuals do not flow back into the \texttt{S} class, as lifelong immunity is postulated. The fractions~$1/\mu$ and~$1/\gamma$ are the average disease incubation and infectious periods, respectively. We assume that the total population remain constant, i.e., ~$s(t)+e(t)+i(t)+r(t)=N$. Next, we consider the rescaled variables, which for simplicity we do not relabel, ~$s(t)/N \rightarrow s(t)$,~$e(t)/N \rightarrow e(t)$, ~$i(t)/N \rightarrow i(t)$,~$r(t)/N \rightarrow r(t)$. The ordinary differential equations (ODEs) governing the SEIR model are then
\[ \left\{
\begin{array}{l}
\dot{s}(t)=-\beta\, s(t)i(t),\qquad
\dot{e}(t)= \beta\, s(t)i(t) - \mu e(t),\\
\dot{i}(t)=\mu e(t) -\gamma i(t),\qquad
\dot{r}(t)=\gamma i(t).
\end{array} \right.
\]
%Note that~$s(t)+e(t)+i(t)+r(t)$ is constant.
We consider a meta-population model and a network that is represented by a \emph{simple graph}~$\mathcal{G}=(V,E)$ with~$n$ vertices (nodes, regions, patches) and~$m$ edges (connections). Each edge is described by a couple of nodes~$(u,v)$,~$u,v\in V$. We order the nodes and label them with an integer index. We assume that the adjacency matrix of~$G$ is an irreducible matrix: there are no isolated and unreachable groups. %\begin{rem} In the rest of this section, we focus on an undirected graph and a symmetric adjacency matrix. One could also assume that the entries of the matrix belong to the %interval~$[0,1]$, thus obtaining a weighted graph, where the weights of the edges represent the probability of contact between the sub-populations in the connected nodes. In %this way, a lockdown or a travel restriction policy could be simulated. Furthermore, one could consider weights varying over time, thus taking into account a variable topology %of the graph underlying the dynamical system. The following analysis can be easily adapted to these cases.
%\end{rem}
%
In node~$j$ the corresponding sub--population possesses~$N_j$ individuals, and~$\sum_{j=1}^{n} N_j=N$. We assume that individuals can move to a different node, interact with people in that node, and then return to the original one. We denote by~$\hat{a}_{jk}$ the total amount of the subpopulation~$j$ that ``goes'' to node~$k$ and interacts with the people in that node. We call~$\hat{A}$ the matrix with entries~$\hat{a}_{jk}$, so that~$\sum_{k=1}^{n} \hat{a}_{jk}=N_j$,~$j=1,\dots,n$. Associated to~$\hat{A}$, let~$P^{out}$ the \textit{probability outgoing matrix} with entries~$p^{out}_{jk}$, where we denote by~$p^{out}_{jk}$ the probability (percentage) that the sub--population~$j$ ``goes'' to node~$k$. In addition, we denote by~$P^{in}$ the \textit{probability incoming matrix} with entries~$p^{in}_{jk}$, where~$p^{in}_{jk}$ is now the probability (percentage) of the sub--population in~$k$ that ``arrived'' from~$j$. Finally, let~$M_j = \sum_{k=1}^{n} \hat{a}_{kj}$ be the total amount of people arrived in node~$j=1,\dots,n$, so that~$\sum_{j=1}^{n} M_j=N$ again. Then, for any~$j=1,\dots,n$,~$\sum_{k=1}^{n} p^{out}_{jk}=\sum_{k=1}^{n} p^{in}_{jk}=1$. Moreover, we have
\[
\hat{A}=\Diag(N_1,\,N_2,\dots,\,N_n)P^{oout} = P^{in} \Diag(M_1,\,M_2,\dots,\,M_n)
\]
where~$\Diag(x_1,\,x_2,\dots,\,x_n)$ is the diagonal matrix with the vector~$(x_1,\,x_2,\dots,\,x_n)^T\in\mathbb{R}^n$ on the main diagonal.

Following the (SEIR model) four different discrete classes are considered for the statuses of individuals in each: susceptible, exposed, infectious, and recovered. Let~$S_j(t),\,E_j(t),\,I_j(t),\,R_j(t)$ the number of individuals in the node~$j$ at time~$t$,~$S_j(t)+E_j(t)+I_j(t)+R_j(t)=N_j$: we consider a time interval in which we can neglect demographics. Without any interaction with other nodes, within a deterministic approach of the compartmental models, with continuous time~$t$, the epidemic dynamics can be described by the system of differential equations in~\eqref{eq:seir:scalar}:
\begin{equation}\label{eq:seir:scalar}
\left\{
\begin{array}{lll}
\dot{S}_j(t)=-\lambda\, S_j(t), &
&\dot{E}_j(t)= \lambda\, S_j(t) - \mu E_j(t) \\
\dot{I}_j(t)=\mu E_j(t) -\gamma I_j(t),
& &\dot{R}_j(t)=\gamma I_j(t)
\end{array}
\right.
\end{equation}
where the parameter~$\lambda$ is the force of infection.

Concerning the behaviour of an epidemic,~$\lambda$ is the rate at which susceptible individuals become infected or exposed, and it is a function depending on the number of infectious individuals: it contains information about the interactions between individuals that concur with the infection transmission. If we suppose that the population of~$N_j$ individuals mix at random, meaning that all pairs of individuals have the same probability of interacting, the force of infection may be computed as:
\[
\begin{array}{ll}
\lambda & = \text{ trasmission rate} \\
 &\quad \times \text{ effective number of contacts per unit time}\\
 &\quad \times \text{ proportion of contacts infectious}\\
 & \sim \beta \frac{I_j}{N_j}
\end{array}
\]
where~$\beta$ is the infectious rate. Then the system state,
\begin{equation}\label{eq:seir:beta}
\left\{
\begin{array}{lll}
\dot{S}_j(t)=- \beta \frac{I_j}{N_j} S_j(t),
& &\dot{E}_j(t)= \beta \frac{I_j}{N_j} S_j(t) - \mu E_j(t), \\
\dot{I}_j(t)=\mu E_j(t) -\gamma I_j(t),
& &\dot{R}_j(t)=\gamma I_j(t)
\end{array}
\right.
\end{equation}
We denote by~$(s_j(t),\, e_j(t),\,\imath_j(t),\,r_j(t))$ the rescaled (percentage) quantities of susceptible, infected, removed at time~$t$ at the node~$j$ normalised to the number~$N_j$ of individuals associated with that node. Then, we obtain
\begin{equation}\label{eq:seir:rescaling}
\left\{
\begin{array}{lll}
\dot{s}_j(t)=- \beta \, \imath_j(t) s_j(t), &
&\dot{e}_j(t)= \beta \, \imath_j(t) s_j(t) - \mu e_j(t),\\
\dot{\imath}_j(t)=\mu e_j(t) -\gamma \, \imath_j(t),
& &\dot{r}_j(t)=\gamma \, \imath_j(t),
\end{array}
\right.
\end{equation}
where~$\dot{\imath}_j$ stands for the derivative of the function~$\imath_j$. 

Now, we take a node~$j$ that is connected with the other nodes as encoded in matrix~$\hat{A}$. Then~$S_j(t)$ can change due to the contribution of susceptible people that come from~$j$ reached an adjacent node~$k$ and met infectious people in that node. Then the contribution to~$\dot{S}_j$ due to the interactions in node~$k$ is given by the the~$p^{out}_{jk} S_j = \hat{a}_{jk} s_j$ susceptible people that met a population in node~$k$ with a proportion of infectious people given by
\[
\frac{\#\{\text{infectious people in node }k\}}{\#\{\text{total people in node }k\}}
=
\frac{\sum_{l=1}^n p^{out}_{lk} I_l }{\sum_{l=1}^n \hat{a}_{lk}} =
\sum_{l=1}^n p^{in}_{lk} \imath_l .
\]

Let the vectors~${\bf{X}}(t)=(x_1(t),\,x_2(t),\,\dots,\,x_n(t))^T \in \mathbb{R}^n$, with~$X=S,\,E,\,I,\,R$, the \emph{$SEIR$ model on the graph~$G$} is the following
\begin{equation}\label{eq_seir_graph}
\left\{
\begin{array}{lll}
\dot{\vS }(t)=- \beta \Diag (\vS (t)) \hat{B} \vI(t),
& &\dot{\vE }(t)= \beta \Diag (\vS (t)) \hat{B} \vI(t) - \mu \vE (t), \\
\dot{\vI }(t)=\mu \vE (t) -\gamma \vI (t),
& &\dot{\vR }(t)=\gamma \vI,
\end{array}
\right.
\end{equation}
where~$\hat{B} = P^{out} \Diag(M_1,\ldots,M_n)^{-1} {P^{out}}^T$, and after rescaling (obtained by a premultiplication with \mbox{$\Diag (N_1,N_2,\ldots,N_n)^{-1}$}),
\begin{equation}\label{eq:SIRgraph2}
\left\{
\begin{array}{ll}
\dot{\vs }(t)=- \beta \Diag (\vs (t)) A \vli(t),
&\dot{\ve}(t)= \beta \Diag (\vs (t)) A \vli(t) - \mu \ve (t),\\
\dot{\vli }(t)=\mu \ve (t) -\gamma \vli (t),
\dot{\vr }(t)=\gamma \vli (t),
\end{array}
\right.
\end{equation}
where~$A=P^{out}(P^{(in)})^T$. 
\begin{rem}
We have assumed that the parameter~$\beta$ is the same in all nodes. It is possible to easily introduce a different parameter for each node considering more heterogeneity in the model. Furthermore, both the parameters and the matrix~$A$ could be time-dependent: the analysis that we will make is easily adaptable to these cases as well. In particular, the change in the parameters, i.e. for~$\beta$, may represent a change in the biomedical aspects of the epidemics, while a change in the matrix~$A$ is a social activity.
\end{rem}
Owing to the classical Cauchy Lipschitz Picard Lindel\"of Theorem, the Cauchy problem obtained by coupling system~\eqref{eq:SIRgraph2} with the initial data
\begin{equation} \label{eq:idata}
 s_j(0) = s_{j0}, \quad e_j(0)= e_{j0}, \quad i_j(0) = i_{j0}, \quad r_j(0) = r_{j0}, \quad
 \text{for every~$j=1, \dots, n$}
\end{equation}
has a local in-time solution. Since~$s_j, e_j, i_j, r_j$ represent percentages of individuals, the modelling meaningful range is
\begin{equation}\label{eq:range}
 0 \leq s_j(t), e_j(t), i_j(t), r_j(t) \leq 1, \quad
 s_j(t)+ e_j(t)+ i_j(t)+ r_j(t) =1,
\end{equation}
for every~$ t \ge 0$ and every~$j=1, \dots, n$. The next result states that the solution of~\eqref{eq:SIRgraph2} does indeed attain values in the above range.
\begin{lemma}\label{l:bounds}%\label{l:zerouno}
Assume that~$0 \leq s_{j0}, e_{j0}, i_{j0}, r_{j0} \leq 1$ and~$s_{j0}+ e_{j0}+ i_{j0}+ r_{j0}=1$, for every~$j=1, \dots, n$. Then, the solution of the Cauchy problem obtained by coupling~\eqref{eq:SIRgraph2} with~\eqref{eq:idata} is global in time and satisfies~\eqref{eq:range}.
\end{lemma}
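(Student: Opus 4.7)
The plan is to combine three ingredients: a per-node conservation law, the positive invariance of the non-negative orthant, and the standard continuation theorem for ODEs. The right-hand side of~\eqref{eq:SIRgraph2} is smooth (polynomial plus linear), so the Cauchy Lipschitz argument cited above provides a unique maximal solution on some interval~$[0,T_{\max})$; the task is to show that it lies in the physical range~\eqref{eq:range} and that~$T_{\max}=+\infty$.

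First I would verify the conservation identity. Summing the four equations of~\eqref{eq:SIRgraph2} node-wise gives~$\dot s_j+\dot e_j+\dot\imath_j+\dot r_j\equiv 0$, so the initial normalisation~$s_{j0}+e_{j0}+\imath_{j0}+r_{j0}=1$ is preserved throughout~$[0,T_{\max})$ for every~$j$. This already gives the sum part of~\eqref{eq:range} and, combined with the non-negativity to be established next, provides the upper bound~$\leq 1$ on every component.

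Next I would prove non-negativity. The susceptible component is immediate: integrating the first equation yields the explicit formula~$s_j(t)=s_{j0}\exp\bigl(-\beta\int_0^t (A\vli)_j(\tau)\,d\tau\bigr)$, which is~$\geq 0$ on all of~$[0,T_{\max})$ since~$s_{j0}\geq 0$. For the remaining components I use that~$A=P^{out}(P^{in})^T$ has non-negative entries, since it is a product of stochastic matrices. Set
\[
\tau^{*}:=\sup\bigl\{t\in[0,T_{\max}): e_j(\sigma),\,\imath_j(\sigma),\,r_j(\sigma)\geq 0 \text{ for all } j \text{ and all } \sigma\in[0,t]\bigr\},
\]
and suppose by contradiction that~$\tau^{*}<T_{\max}$. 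By continuity all three quantities are still~$\geq 0$ at~$\tau^{*}$, and by the explicit formula above so is~$s_j(\tau^{*})$. I then apply a Nagumo-type subtangent check at~$\tau^{*}$: if~$e_j(\tau^{*})=0$ then
$\dot e_j(\tau^{*})=\beta s_j(\tau^{*})(A\vli)_j(\tau^{*})\geq 0$
because~$A\geq 0$ entrywise and~$\vli(\tau^{*})\geq 0$; analogously, if~$\imath_j(\tau^{*})=0$ then~$\dot\imath_j(\tau^{*})=\mu e_j(\tau^{*})\geq 0$, and if~$r_j(\tau^{*})=0$ then~$\dot r_j(\tau^{*})=\gamma\imath_j(\tau^{*})\geq 0$. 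The tangent cone to the non-negative orthant contains the vector field at~$\tau^{*}$, so the solution cannot exit the orthant immediately after~$\tau^{*}$; this contradicts the definition of~$\tau^{*}$ and forces~$\tau^{*}=T_{\max}$. (Equivalently, one can write the integral representations~$e_j(t)=e_{j0}e^{-\mu t}+\beta\int_0^t e^{-\mu(t-\tau)}s_j(\tau)(A\vli)_j(\tau)\,d\tau$, and similarly for~$\imath_j,r_j$, and bootstrap through a Picard iteration in which every iterate is non-negative because~$A\geq 0$.)

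Finally, combining the conservation identity with non-negativity yields~$0\leq s_j,e_j,\imath_j,r_j\leq 1$ on~$[0,T_{\max})$; since the solution stays in a compact subset of~$\mathbb{R}^{4n}$, the standard continuation theorem for ODEs rules out finite-time blow-up and gives~$T_{\max}=+\infty$. The delicate step I expect to be the main obstacle is the non-negativity argument, because the force of infection~$(A\vli)_j$ couples every node to every other one, so positivity cannot be proved node-by-node and must be closed simultaneously for the full vector system; what makes this possible is precisely the entrywise non-negativity of~$A$ inherited from the stochastic structure of~$P^{out}$ and~$P^{in}$.
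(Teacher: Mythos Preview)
Your argument is correct and reaches the same conclusion, but the route differs from the paper's. You handle non-negativity by a direct invariance argument: the explicit exponential formula for~$s_j$, then a Nagumo tangent-cone check (with the variation-of-constants/Picard alternative) for $e_j,\imath_j,r_j$, exploiting that~$A\ge 0$ entrywise. The paper instead perturbs the system by adding a parameter~$\varepsilon>0$ to the right-hand sides of the $e$- and $i$-equations, so that at the first hitting time of the boundary the relevant derivative is \emph{strictly} positive ($\ge\varepsilon$), which forces the hitting time to be~$0$; non-negativity for the original system then follows by continuous dependence on parameters as~$\varepsilon\to 0^+$. The perturbation trick is more elementary in that it avoids citing Nagumo/Brezis invariance and sidesteps the borderline case where the derivative at the boundary vanishes; your approach is more direct and also gives the sharper statement $s_j(t)>0$ whenever $s_{j0}>0$ without any extra work. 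Either way, the coupling through~$A$ is handled by the same structural observation you single out at the end: the entrywise non-negativity of~$A$ is exactly what makes the boundary checks (or the perturbed derivatives) come out with the right sign simultaneously for all nodes.
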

\begin{proof}
To establish the second condition in~\eqref{eq:range}, it suffices to observe that~$\dot{s}_j (t)+ \dot{e}_j (t)+ \dot{\imath \oint}_j (t)+ \dot{r}_j (t)=0$ for every~$j=1, \dots, n$. We recall that, if the solution is bounded, then it can also be extended for every~$t \ge 0$. Indeed, it suffices to show that~$s_j(t), e_j(t), i_j(t), r_j(t) \ge 0$ for every~$t$ and every~$j=1, \dots, n$. By the continuous dependence of solutions of ODEs on parameters, it suffices to establish the same property for the family of perturbed systems
\begin{equation*}
\label{eq:SIRgraph3} \left\{
\begin{array}{l}
\dot{s}^\varepsilon_j(t)=-\beta\, s^\varepsilon_j(t) \sum_{k=1}^{n} a_{jk}i^\varepsilon_k(t) \\
\dot{e}^\varepsilon_j(t)=\beta\, s^\varepsilon_j(t) \sum_{k=1}^{n} a_{jk}i^\varepsilon_k(t) -\mu e^\varepsilon_j(t) + \varepsilon \\
\dot{i}_j(t)= \mu e^\varepsilon_j(t)-\gamma i^\varepsilon_j(t) + \varepsilon, \phantom{\sum_{k=1}^{n}}\\
\dot{r}_j(t)= \gamma i^\varepsilon_j(t),
\end{array} \right.
\end{equation*}
where~$\varepsilon>0$ is a parameter converging to~$0^+$. First, we observe that, if~$s^\varepsilon_j(t)=0$ for some~$t$, then~$\dot{s}^\varepsilon_j(t)=0$. By the uniqueness part of the Cauchy Lipschitz Picard Lindel\"of Theorem, this implies that~$s^\varepsilon_j(t) \equiv 0$ if~$s^\varepsilon_j(0)=0$ and hence that~$s^\varepsilon_j(t) > 0$ for every~$t$ if~$s_{j0} > 0$. Next, we set
$$
 \bar t_\varepsilon : = \min \{ t \ge 0: \; e^\varepsilon_j( t) =0 \; \text{or} \; i^\varepsilon_j (t) =0 \; \text{for some~$j =1, \dots, n$} \},
$$
and we separately consider the following cases. If there is~$j=1, \dots, n$ such that~$e^\varepsilon_j (\bar t_\varepsilon) =0$, then, since~$s^\varepsilon_j (\bar t), i^\varepsilon_j (\bar t_\varepsilon) \ge0$, we have~$\dot{e}^\varepsilon_j (\bar t_\varepsilon) \ge \varepsilon$ and hence~$\bar t_\varepsilon=0$ and~$e^\varepsilon_j >0$ in a left neighborhood of~$0$. If there is~$j=1, \dots, n$ such that~$i^\varepsilon_j (\bar t_\varepsilon) =0$, then~$\dot{i}^\varepsilon_j (\bar t_\varepsilon) \ge \varepsilon$ and hence~$\bar t_\varepsilon=0$ and~$i^\varepsilon_j >0$ in a left neighborhood of~$0$. This implies that~$0 \leq s^\varepsilon_j(t), e^\varepsilon_j(t), i^\varepsilon_j (t)$ for every~$t\ge 0$. Since~$\dot{r}_j^\varepsilon \ge 0$, then~$r^\varepsilon_j (t) \ge 0~$ for every~$t\ge 0$. By letting~$\varepsilon \to 0^+$, we conclude the proof of the lemma.
\end{proof}
In the following, we adopt the notation~$ \vone =[1\,1\,\ldots ,1]^{\top}$,~$ \vzero =[0,\,0,\,\ldots ,0]^{\top}$, and, for any vectors~$\mathbf{x},\, \mathbf{y}\in \mathbb{R}^n$ we write~$\mathbf{x}<<\mathbf{y}\,\Leftrightarrow x_i < y_i\,i=1,2,\ldots ,n$,~$\mathbf{x}\leq \mathbf{y} \,\Leftrightarrow x_i \leq y_i\,i=1,2,\ldots ,n$ and~$\mathbf{x}<\mathbf{y}$ if~$\mathbf{x}\leq \mathbf{y}$ but~$\mathbf{x}\neq \mathbf{y}$. The asymptotic behaviour of system~\eqref{eq:SIRgraph2} is described by the following Lemma.
\begin{lemma}\label{Teo:seir1}
Considering the SEIR system~\eqref{eq:SIRgraph2}, the equilibrium points are~$\mathbf{P}_\infty =(\mathbf{s}_\infty ,\, 0 ,\, 0 ,\,\vr_\infty )$. Also, consider the Cauchy problem obtained by coupling~\eqref{eq:SIRgraph2} with~\eqref{eq:idata}. Under the same assumptions as in Lemma~\ref{l:bounds}, every trajectory converges to an equilibrium point~$\mathbf{P}_\infty =(\vs_\infty ,\, 0 ,\, 0 ,\,\vr_\infty )$ with~$\vs_\infty^{\top} \vone +\vr_\infty^{\top} \vone = \vone$.
\end{lemma}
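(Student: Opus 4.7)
The plan is to first characterise the equilibrium set by setting all the right-hand sides of \eqref{eq:SIRgraph2} to zero, and then to show convergence of every trajectory through a monotonicity argument for $\vs$ and $\vr$ together with an integrability/Barb\u{a}lat argument for $\ve$ and $\vli$.

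First I would identify the equilibria directly from \eqref{eq:SIRgraph2}. The equation $\dot{\vr}=\gamma\vli=0$ forces $\vli_\infty=\vzero$; substituting into $\dot{\vli}=\mu\ve-\gamma\vli=0$ yields $\ve_\infty=\vzero$. The remaining two equations are then satisfied for any nonnegative $\vs_\infty$, $\vr_\infty$, so the equilibrium set is exactly $\{(\vs_\infty,\vzero,\vzero,\vr_\infty): \vs_\infty,\vr_\infty\in[0,1]^n\}$, as claimed.

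Next I would establish convergence of the individual components. By Lemma~\ref{l:bounds}, all components remain in $[0,1]$ and $s_j+e_j+i_j+r_j=1$ is conserved for every $j$. Since the entries of $A$ are nonnegative and $\vli(t)\ge\vzero$, we have $\dot{s}_j=-\beta s_j(A\vli)_j\le 0$, so each $s_j(t)$ is nonincreasing, bounded below by $0$, and hence converges to some $s_{j,\infty}\ge 0$. Similarly $\dot{r}_j=\gamma i_j\ge 0$ makes $r_j(t)$ nondecreasing and bounded above by $1$, hence convergent to some $r_{j,\infty}\le 1$. Integrating $\dot{r}_j=\gamma i_j$ from $0$ to $t$ and using $r_j(t)\le 1$ yields
\begin{equation*}
\int_0^\infty i_j(\tau)\,d\tau \le \frac{1-r_j(0)}{\gamma}<\infty.
\end{equation*}
From the ODE the derivative $\dot{i}_j=\mu e_j-\gamma i_j$ is bounded on $[0,\infty)$, so $i_j$ is uniformly continuous; a nonnegative uniformly continuous integrable function on $[0,\infty)$ must tend to $0$ (Barb\u{a}lat's lemma), giving $i_j(t)\to 0$. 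Integrating $\mu e_j=\dot{i}_j+\gamma i_j$ over $[0,t]$ shows
\begin{equation*}
\mu\int_0^t e_j(\tau)\,d\tau = i_j(t)-i_j(0)+\gamma\int_0^t i_j(\tau)\,d\tau,
\end{equation*}
and the right-hand side is uniformly bounded as $t\to\infty$; uniform continuity of $e_j$ (again from boundedness of $\dot{e}_j$) plus Barb\u{a}lat then gives $e_j(t)\to 0$.

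Finally, passing to the limit $t\to\infty$ in the conservation identity $s_j(t)+e_j(t)+i_j(t)+r_j(t)=1$ yields $s_{j,\infty}+r_{j,\infty}=1$ for every $j$, which in vector form is exactly the asserted relation (the statement $\vs_\infty^\top\vone+\vr_\infty^\top\vone=n$ componentwise, or equivalently $\vs_\infty+\vr_\infty=\vone$). The main obstacle I anticipate is the step showing $\vli(t)\to\vzero$ and $\ve(t)\to\vzero$: on its own, $L^1$-integrability of $i_j$ does not imply pointwise decay, and this is precisely where the Barb\u{a}lat-type argument (relying on the uniform continuity obtained from the bounded right-hand side of the ODE) is essential. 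The coupling through $A$ does not actually enter this last step in a nontrivial way, since the decay can be read off componentwise from the structure of the last two equations of \eqref{eq:SIRgraph2}.
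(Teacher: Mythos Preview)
Your proof is correct, but it takes a somewhat different route from the paper's.

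For the equilibrium characterisation you do exactly what the paper does. For the convergence part, the paper avoids Barb\u{a}lat entirely by exploiting two further monotone quantities: since $\dot{\imath}_j+\dot{r}_j=\mu e_j\ge 0$ and $\dot{s}_j+\dot{e}_j=-\mu e_j\le 0$, the sums $i_j+r_j$ and $s_j+e_j$ are monotone, so (together with the monotonicity of $s_j$ and $r_j$ you already noted) all four components have limits. The paper then invokes the standard fact that a bounded trajectory of an autonomous ODE that converges must converge to an equilibrium, which forces $e_{j,\infty}=i_{j,\infty}=0$. Your approach instead establishes $i_j\to 0$ and $e_j\to 0$ directly via $L^1$-integrability plus uniform continuity. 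The paper's argument is slightly more elementary (pure monotonicity, no Barb\u{a}lat), while yours has the advantage of identifying the limits of $e_j$ and $i_j$ explicitly without appealing to the ``limit must be an equilibrium'' step. Either way the conclusion is the same.
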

\begin{proof}
Let~$(\vs, \ve, \vli, \vr)$ be an equilibrium point. From the fourth equation in~\eqref{eq:SIRgraph2}, we conclude that~$i_j=0$ for every~$j=1, \dots, n$. By plugging this equality in the third equation of~\eqref{eq:SIRgraph2}, we conclude that~$e_j=0$ for every~$j=1, \dots, n$. This implies that the equilibria are in the form~$\mathbf{P}_\infty =(\vS_\infty ,\, 0 ,\, 0 ,\,\vR_\infty )$. We are left to prove the second part of the lemma. We recall Lemma~\ref{l:bounds} and by using the fourth equation in~\eqref{eq:SIRgraph2} we conclude that, for every~$j=1, \dots, n$, the function~$r_j$ is monotone non-decreasing and hence has a limit as~$t \to + \infty$. Also, the limit is confined between~$0$ and~$1$. By adding the third and the fourth equation in~\eqref{eq:SIRgraph2} we get that ~$i_j + r_j$ is monotone non-decreasing and also has a limit as~$t \to + \infty$. We conclude that~$i_j$ has a limit as~$t \to + \infty$. By using the first equation in~\eqref{eq:SIRgraph2} we infer that, for every~$j=1, \dots, n$,~$s_j$ is monotone non-increasing and hence has a limit, which is confined between~$0$ and~$1$, as~$t\to + \infty$. By adding the first and the second equation in~\eqref{eq:SIRgraph2} we get that, for every~$j=1, \dots, n$, ~$s_j + e_j$ is monotone non-increasing and hence has a limit as~$t \to + \infty$. We eventually conclude that~$e_j$ has a limit for~$t\to + \infty$. Since the limit point is finite, it must be an equilibrium. The condition~$\vs_\infty^{\top} \vone +\vr_\infty^{\top} \vone =\vone$ follows from the equality~$s_j (t) + e_j(t) + i_j (t) + r_j (t) =1$ for every~$j=1, \dots, n$ and~$t \ge 0$.
\end{proof}
We now focus on the transient behaviour of system~\eqref{eq:SIRgraph2}. If~$\vs(t) \ge \vzero$, then the matrix~$B(t)=Diag(\vs(t))\,A$ is a non-negative, irreducible matrix. Owing to the Perron-Frobenius Theorem~\cite{FC1997}, this implies that~$B (t)$ has a positive eigenvalue, which we denote by~$\lambda_M (t)=\rho (B(t))$, equal to the spectral radius. Also, there is an associated positive eigenvector, which we denote by~$\mathbf{V}_M(t)>>0$. Note that, if~$\vs(t) \to 0$, then~$\lambda_M(t) \to 0$. The next result focuses on the transient behaviour of the solutions of~\eqref{eq:SIRgraph2}.
\begin{theorem}\label{Teo:seir2}
Consider system~\eqref{eq:SIRgraph2}, let~$\lambda_M (t)$ be the dominant eigenvalue of~$B(t)$ and~$\mathbf{V}_M(t)>>0$ the corresponding positive left eigenvector.
\begin{itemize}
\item[(1)] Assume that for some~$\tau\geq 0$ we have~$\beta \lambda_M(\tau )\leq \gamma$ then~$q_\tau (t)=\mathbf{V}_M(\tau )^{\top}\, (\ve(t)+\vli(t))$ is monotonically decreasing to~$0$ on~$[\tau, + \infty[$.
\item[(2)] Assume that~$\beta \lambda_M (\tau) > \gamma$ for some~$\tau \ge 0$, then there is~$ t^*>\tau$ such that~$q_\tau(t)= \mathbf{V}_M(\tau )^{\top}\,(\ve (t)+ \vli (t))$ is monotone non decreasing on~$[\tau ,t^*[$.
\item[(3)] If the limit point of the trajectory under consideration is~$(0, 0, 0, 1)$, then there is~$\tau >0$ such that~$\beta \lambda_M (\tau) \leq \gamma$ and
item (1) applies.
\end{itemize}
\end{theorem}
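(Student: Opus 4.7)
The plan is to derive a single scalar differential inequality for $q_\tau(t) = \mathbf{V}_M(\tau)^\top(\ve(t) + \vli(t))$ and read off all three items from it. Adding the second and third equations in~\eqref{eq:SIRgraph2} cancels the $\mu\ve$ contributions and yields $\dot{\ve} + \dot{\vli} = (\beta B(t) - \gamma I)\vli(t)$ with $B(t) = \Diag(\vs(t))\,A$, so
\begin{equation*}
\dot{q}_\tau(t) = \mathbf{V}_M(\tau)^\top \bigl(\beta B(t) - \gamma I\bigr) \vli(t).
\end{equation*}
The first equation in~\eqref{eq:SIRgraph2} together with Lemma~\ref{l:bounds} gives $\dot{s}_j = -\beta s_j \sum_k a_{jk} i_k \le 0$, so $\vs(t)\le\vs(\tau)$ componentwise for $t\ge\tau$; since $A\ge 0$, this forces $B(t)\le B(\tau)$ entrywise, and strict positivity of the left Perron--Frobenius eigenvector $\mathbf{V}_M(\tau)\gg\vzero$ allows me to contract on the left to obtain $\mathbf{V}_M(\tau)^\top B(t)\le \lambda_M(\tau)\,\mathbf{V}_M(\tau)^\top$ componentwise, with equality at $t=\tau$. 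Combined with $\vli(t)\ge\vzero$ this delivers the central inequality
\begin{equation*}
\dot{q}_\tau(t) \le \bigl(\beta \lambda_M(\tau) - \gamma\bigr)\,\mathbf{V}_M(\tau)^\top \vli(t), \qquad t\ge\tau,
\end{equation*}
again with equality at $t=\tau$.

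Items (1) and (2) are then immediate. In case (1), $\beta\lambda_M(\tau)\le\gamma$ makes the right-hand side non-positive, so $q_\tau$ is non-increasing on $[\tau,+\infty)$; Lemma~\ref{Teo:seir1} delivers $\ve(t),\vli(t)\to\vzero$, so the monotone limit of $q_\tau$ must be $0$. In case (2), the equality at $t=\tau$ gives $\dot{q}_\tau(\tau)=(\beta\lambda_M(\tau)-\gamma)\,\mathbf{V}_M(\tau)^\top\vli(\tau)>0$ whenever $\vli(\tau)\neq\vzero$, and continuity of $\dot{q}_\tau$ (inherited from $C^1$-regularity of the solution) yields a maximal $t^\ast>\tau$ with $\dot{q}_\tau\ge 0$ on $[\tau,t^\ast)$. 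For (3), if the trajectory converges to $(\vzero,\vzero,\vzero,\vone)$ then $\vs(t)\to\vzero$, hence $B(t)\to 0$ entrywise, and since the spectral radius of a non-negative matrix is dominated by its maximum row sum, $\lambda_M(t)=\rho(B(t))\to 0$; therefore $\beta\lambda_M(\tau)\le\gamma$ for every sufficiently large $\tau$, and item~(1) applies.

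The main technical obstacle is justifying the chain of componentwise inequalities contracted against the left Perron--Frobenius eigenvector: strict positivity $\mathbf{V}_M(\tau)\gg\vzero$ is essential in order to turn the entrywise bound $B(t)\le B(\tau)$ into the vector inequality $\mathbf{V}_M(\tau)^\top B(t) \le \lambda_M(\tau)\,\mathbf{V}_M(\tau)^\top$, and this positivity rests on irreducibility of $A$ together with $\vs(\tau)\gg\vzero$ (itself guaranteed by positive initial data, as in the proof of Lemma~\ref{l:bounds}). A minor subtlety in (2) arises when $\vli(\tau)=\vzero$ but $\ve(\tau)\neq\vzero$: then $\dot{\vli}(\tau)=\mu\ve(\tau)\neq\vzero$, so $\vli$ becomes componentwise positive immediately after $\tau$ and the argument above applies on any interval $[\tau',t^\ast)$ with $\tau'$ slightly larger than $\tau$; the degenerate case $\ve(\tau)=\vli(\tau)=\vzero$ corresponds to an equilibrium and is trivial.
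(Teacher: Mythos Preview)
Your proof is correct and follows essentially the same approach as the paper: add the second and third equations of~\eqref{eq:SIRgraph2}, contract on the left by the positive Perron--Frobenius eigenvector~$\mathbf{V}_M(\tau)$, use the componentwise monotonicity~$\vs(t)\le\vs(\tau)$ to bound~$\mathbf{V}_M(\tau)^\top B(t)$ by~$\lambda_M(\tau)\,\mathbf{V}_M(\tau)^\top$, and then read off each item from the resulting scalar inequality together with Lemma~\ref{Teo:seir1}. Your treatment is in fact slightly more careful than the paper's in item~(2), where you explicitly handle the borderline case~$\vli(\tau)=\vzero$, $\ve(\tau)\neq\vzero$; the paper simply records $\dot q_\tau(\tau)\ge 0$ without isolating this degeneracy.
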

\begin{proof}
We first establish item (1). By multiplying the sum of the second equation and the third equation in~\eqref{eq:SIRgraph2} by~$\mathbf{V}_M(\tau )$ we get
\[
\mathbf{V}_M (\tau)^{\top} \left[ \dot{\ve}(t)+ \dot{\vli}(t)\right] = \vV_M (\tau)^{\top} (\beta\,\Diag(\vs (t)) A \vli(t) - \gamma \vli (t)).
\]
Since~$\vs (t)$ is monotone non increasing (see the proof of Lemma~\ref{Teo:seir1}), then
$$\vV_M (\tau)^{\top} (\beta\,\Diag(\vs (t)) \leq \vV_M (\tau)^{\top} (\beta\,\Diag(\vs(\tau)),$$
for every~$\tau \ge t$. This yields
\[
\frac{d\,\vV_M (\tau)^{\top} (\ve (t) +{\vli}(t))}{dt}\leq \vV_M (\tau)^{\top} (\beta\,\Diag(\vs (\tau )) A \vli (t) - \gamma \vli (t)),\,\,t\geq \tau.
\]
Since~$\vV_M (\tau)^{\top} (\beta\,\Diag(\vs(\tau )) B)=\lambda_M (\tau) \vV_M (\tau)^{\top}$, then
\[
\frac{d\,\vV_M (\tau)^{\top} ({\ve}(t)+\vli (t))}{dt}\leq \left( \beta \lambda_M(\tau ) -\gamma \right)\vV_M (\tau)^{\top} \vli (t),
\leq 0.
\]
To establish the last inequality we have used the inequalities~$\left( \beta \lambda_M(\tau ) -\gamma \right)\leq 0$,~$\vV_M>>0$, and~$\vli (t)\ge 0$. The above inequality implies that~$\vV_M (\tau)^{\top} ({\ve}(t)+\vli (t))$ is monotone non increasing on~$[\tau, + \infty[$, and henceforth converging as~$t \to + \infty$. Owing to Lemma~\ref{Teo:seir1}, the limit is~$0$. To establish item (2), it suffices to point out that, if~$\beta \lambda_M (\tau) > \gamma$, then
$$
\frac{d}{dt}\left( \vV_M(\tau )^{\top} (\ve (t)+ \vli (t)) \right)_{t=\tau}
\ge 0.
$$
To establish item (3), we point out that, if~$\vs (t)$ converges to~$0$, then~$\lambda_M (t)$ converges to~$0$ and hence the condition~$\beta \lambda_M (\tau) \leq \gamma$ is satisfied for sufficiently large~$\tau$.
\end{proof}
\begin{rem}
Theorem~\ref{Teo:seir2} implies that, if the parameters~$\beta$,~$\gamma$, and the matrix~$A$ can change over time, the conditions of the beginning of the epidemic spread or a decrease in quantity~$\ve (t)+ \vli (t)$ could occur several times. This may justify the occurrence of multiple epidemic waves.
\end{rem}
\begin{rem} If the graph~$G$ is a complete graph~$K_n$ ($K_n$ is a simple undirected graph where every pair of distinct vertices is connected by a unique edge), then we recover the scalar SEIR model by setting~$s(t)=\sum_j s_j(t)$,~$e(t)=\sum_j e_j(t)$,~$i(t)=\sum_j i_j(t)$,~$r(t)=\sum_j r_j(t)$.
\end{rem}
\begin{figure}[t]
\centering
\begin{tabular}{cc}
(a)\quad\includegraphics[height=100pt]{./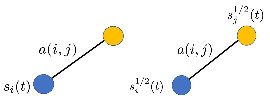}
&(b)\includegraphics[height=100pt]{./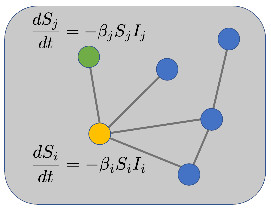}
\end{tabular}
\caption{(a) Un-symmetric and symmetric weights for the SEIR models on a graph, (b) definition of the SEIR model on graphs starting from the governing differential equations of the 1D SEIR model at each node of the input graph.\label{fig:EDGE-WEIGHTS}}
\end{figure}
\section{Spectrum of the SEIR model on graphs\label{sec:spectra}} 
In this section, we always assume that all the components of~$\vs (t)$ are strictly positive. Owing to the proof of Lemma~\ref{l:bounds}, this amounts to assuming that they are strictly positive at~$t=0$. The coefficient matrix \mbox{$B(t):=\Diag(\vs (t))A$} of the SEIR model on a general simple graph, can be interpreted as a weighted adjacency matrix, where the weight associated with the edge \mbox{$(i,j)$} is \mbox{$b_{ij}(t):=s_{i}(t)a_{ij}$}; indeed, the weighted adjacency matrix is not symmetric and the weighted graph is directed. To study the spectrum of \mbox{$B(t)$} and eventually work with a symmetric weighted adjacency matrix, we introduce its ``symmetrisation'' (Fig.,~\ref{fig:EDGE-WEIGHTS})
\begin{equation*}
B_{sym}(t):=\Diag(S(t)) ^{1/2} A\Diag(S(t)) ^{1/2},
\end{equation*}
where the weight associated with the edge \mbox{$(i,j)$} is \mbox{$b^{sym}_{ij}(t):=s_{i}^{1/2}(t)a_{ij}s_{j}^{1/2}(t)$}. Furthermore, the matrices~$ B(t)$ and~$ B_{sym}(t)$ have the same eigenvalues; in fact,
\begin{equation*}
\begin{split}
 B(t)-\nu I
&=\Diag(S(t)) ^{1/2}\left(\Diag(\vs (t)) ^{1/2}\, A\,\Diag( \vs (t)) ^{1/2}-\nu I \right)\Diag( \vs(t)) ^{-1/2}\\
&=Diag(\vs (t)) ^{1/2}\left(B_{sym}(t)-\nu I \right)\Diag( \vs (t)) ^{-1/2}.
\end{split}
\end{equation*}
Let~$ \mathbf{v}~$ be an eigenvector of \mbox{$ B(t)$} with eigenvalue~$\nu$. Then,
\begin{equation*}
\begin{split}
 &B(t) \mathbf{v} =\nu \mathbf{v} \iff
\Diag( \vs (t)) ^{{\color{black}-}1/2} B \mathbf{v} =\nu\Diag(\vs (t)) ^{-1/2} \mathbf{v} \iff
 \mathbf{y} :=\Diag(\vs (t)) ^{-1/2} \mathbf{v} ,\\
 &Diag(\vs (t))^{{\color{black}-}1/2} B\Diag(\vs (t)) ^{1/2} \mathbf{y} =\nu \mathbf{y}
 \iff B_{sym}(t)\mathbf{v}=\nu \mathbf{v}.
 \end{split}
\end{equation*}
Indeed, \mbox{$ \mathbf{y} :=\Diag(\vs (t)) ^{-1/2} \mathbf{v}~$} is an eigenvector of~$ B_{sym}(t)$ with eigenvalue~$\nu$, i.e., the spectrum of~$ B(t)$ is real and it uniquely identifies the spectrum of~$ B_{sym}(t)$, and vice versa. Note that the eigenvectors \mbox{$ \mathbf{y} _{i}:=\Diag(\vs (t)) ^{-1/2} \mathbf{v} _{i}$}, \mbox{$i=1,\ldots,n$} are not orthonormal.

From these properties, we consider the reduced (the vector~$\vr$ can be reconstructed from vectors~$\vs$,~$vli$,~$\ve$) \emph{SEIR} and \emph{symmetric SEIR models on a general simple graph} (Fig.,~\ref{fig:ED-SEIR-TEST})
\begin{equation}\label{eq:SYMMETRIC-SEIR}
\left\{
\begin{array}{l}
\dot{\vs}=-\beta\Diag(\vs (t)) {\color{black}A} \vli
=-\beta B(t) \vli \\
\dot{\ve}=\beta B(t) \vli -\mu \ve \\
\dot{\vli }=\mu \ve -\gamma \vli
\end{array}
\right.\qquad
\left\{
\begin{array}{l}
\dot{\vs }=-\beta B_{sym}(t) \vli \\
\dot{\ve }=\beta B_{sym}(t) \vli -\mu \ve \\
\dot{\vli }=\mu \ve -\gamma \vli .
\end{array}
\right.
\end{equation}
\textbf{Bounds to the spectrum of SEIR on graphs}
Let~$d_{max}$ be the maximum degree of a node in the input graph~${G}$, and let~$d_{avg}$ be the average degree of a node in~${G}$. Let \mbox{$\nu_{1}\geq\nu_{2}\geq\ldots\nu_{n}$} be the eigenvalues of the adjacency matrix~$\tilde{A}$ of~${G}$. Then, the bound \mbox{$d_{avg}\leq\nu_{1}\leq d_{max}$} is applied to estimate the variation of the eigenvalues in linear time. The maximum eigenvalue is computed efficiently through the Arnoldi method~\cite{GOLUB1989}. We now derive upper and lower bounds to the eigenvalues of~$ B(t)$, or equivalently \mbox{$ B_{sym}(t)$} (same eigenvalues):
\begin{equation*}
{\small{\begin{split}
\nu_{1}( B(t))
&=\nu_{1}( B_{sym}(t))
=\max_{ \mathbf{v} \neq 0 }\frac{ \mathbf{v} ^{\top}\Diag(\vs (t)) ^{1/2} {\color{black}A}\Diag( \vs (t)) ^{1/2} \mathbf{v}^{\top}}{ \mathbf{v}^{\top}\mathbf{v} }\\
&=\max_{ \mathbf{y} \neq \vzero }\frac{ \mathbf{y}^{\top} {\color{black}A} \mathbf{y} }{ \mathbf{y}^{\top}\Diag(\vs (t)) ^{-1} \mathbf{y} },\quad \mathbf{y} := {\color{black}Diag(\vs (t)) ^{1/2}} \mathbf{v} ,\\
&\leq\frac{\nu_{1}({\color{black} \tilde A} )+1}{\min_{i}\{s_{i}^{-1}(t)\}}
\leq(\nu_{1}({\color{black} \tilde A)} +1) \max_{i}\{\|s_{i}\|_{\infty}\}
\leq (d_{max}+1)\max_{i}\{\|s_{i}\|_{\infty}\}.
\end{split}}}
\end{equation*}
In an analogous way,
\begin{equation*}
\begin{split}
\nu_{1}( B_{sym}(t))
&={\color{black} \max}_{ \mathbf{v} \neq 0 }\frac{ \mathbf{v}^{\top} B_{symm}(t) \mathbf{v} }{ \mathbf{v}^{\top}\mathbf{}v}
\geq \frac{ \vone ^{\top} B_{sym}(t) \vone }{ \vone ^{\top} \vone }\geq\frac{1}{n}\left( \vone^{\top} B_{sym}(t) \vone \right)\\
&\geq\frac{1}{n}\nu_{1}( A)\sum_{i}s_{i}(t)
\geq( {\color{black} \nu_1(\tilde A)}+1)\min_{i}\{s_{i}(\cdot)\}=(d_{avg}+1)\min_{i}\{s_{i}(\cdot)\}.
\end{split}
\end{equation*}
\textbf{SEIR model and graph Laplacian}
Let~$ D~$ be the diagonal matrix whose entries are the degrees \mbox{$ d :=(d_{i})_{i}$} of the nodes of the input graph and \mbox{$ L := D -\tilde A$} the corresponding Laplacian matrix. Then, we rewrite the SEIR model as
\begin{equation}\label{eq:DIFFUSIVE-SEIR}
\begin{split}
\dot{\vs }
&=-\beta\Diag(\vs (t)) \left[ D - D + {\color{black}A}\right] \vli (t)
=-\beta\Diag(\vs (t)) \left[ D - L +I_n\right] \vli (t)\\
&=-\beta\left[\Diag(\vs (t)) D -\Diag(\vs (t)) L +\Diag(\vs (t))\right] \vli (t)\\
&=-\beta\left[\Diag( (d+1) \circ \vs (t))-\Diag(\vs (t)) L \right] \vli (t),
\end{split}
\end{equation}
where \mbox{$\Diag(\vs (t)) L~$} is the weighted Laplacian matrix.
The continuous counterpart is related to the diffusion with the Laplace-Beltrami operator (Fig.,~\ref{fig:DIFFUSION-SIMULATION}).
\begin{figure}[t]
\centering
\begin{tabular}{cccc}
\includegraphics[height=0.18\textwidth]{./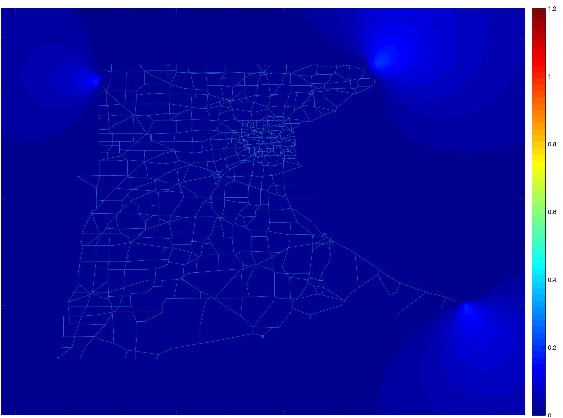}
&\includegraphics[height=0.18\textwidth]{./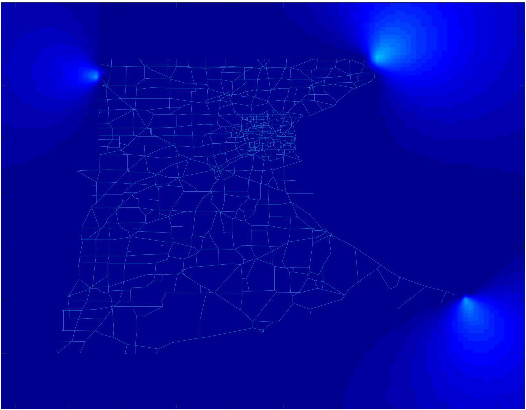}
&\includegraphics[height=0.18\textwidth]{./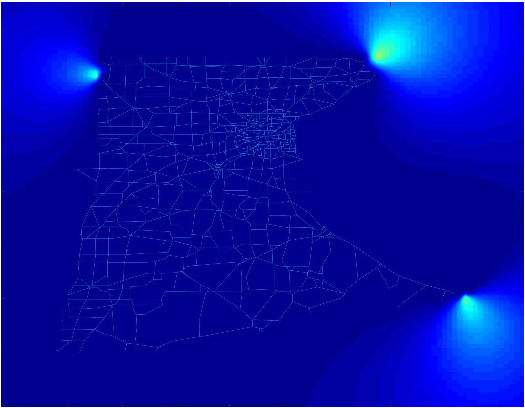}
&\includegraphics[height=0.18\textwidth]{./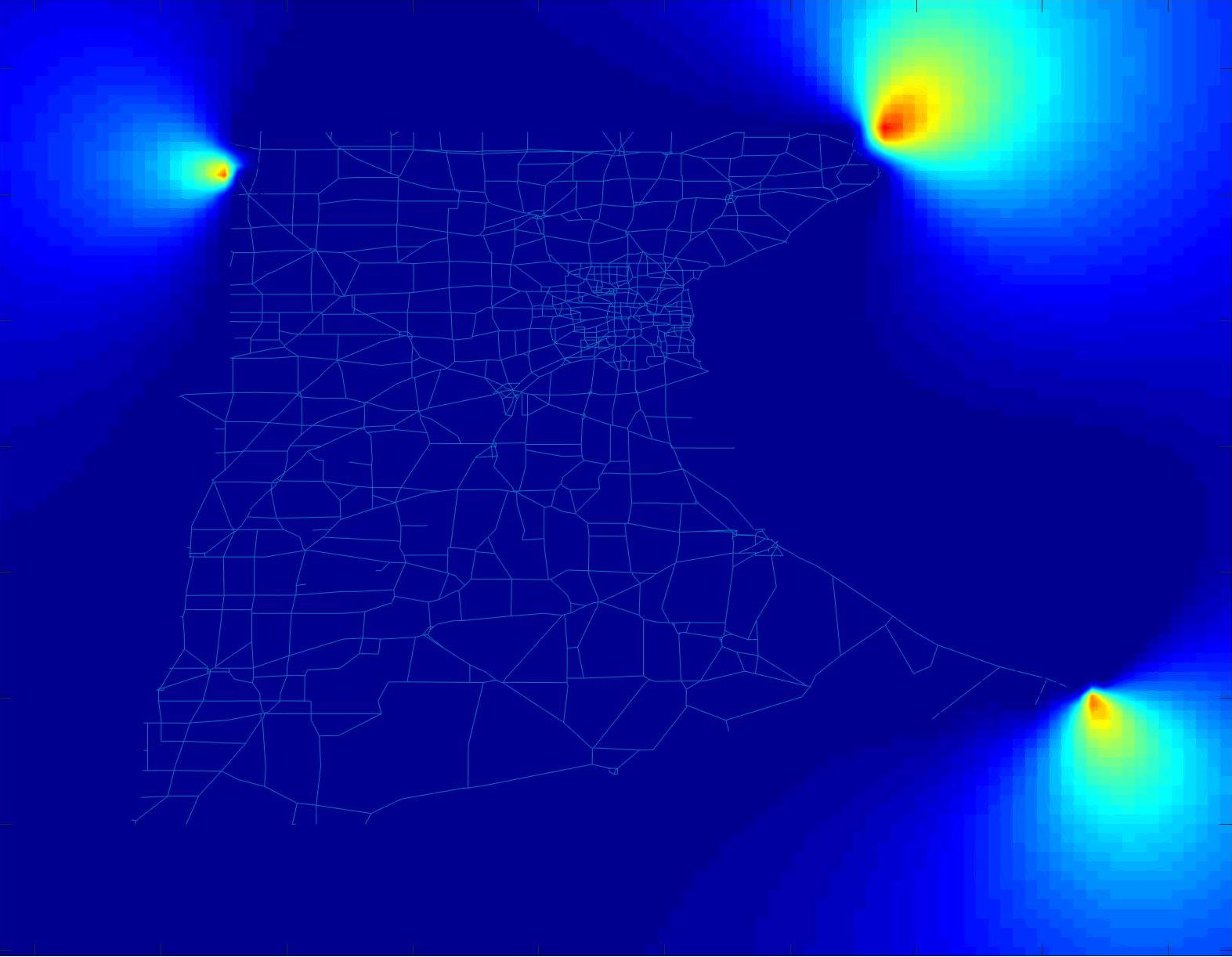}\\
\includegraphics[height=0.18\textwidth]{./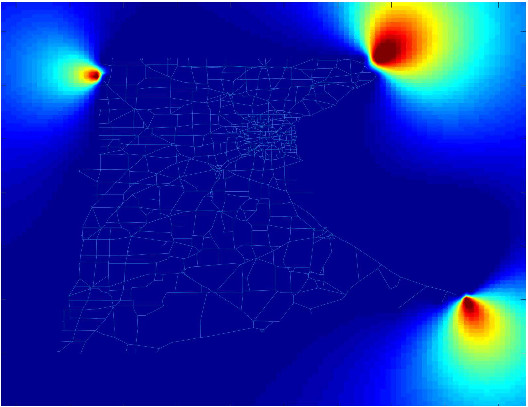}
&\includegraphics[height=0.18\textwidth]{./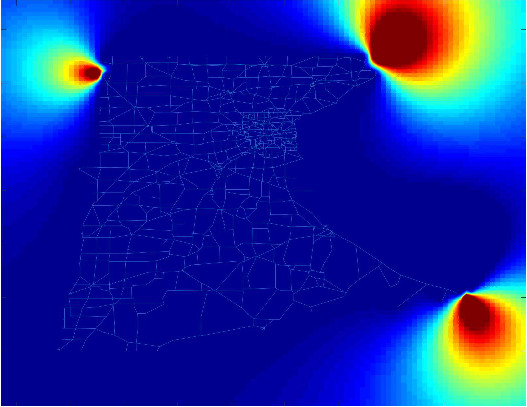}
&\includegraphics[height=0.18\textwidth]{./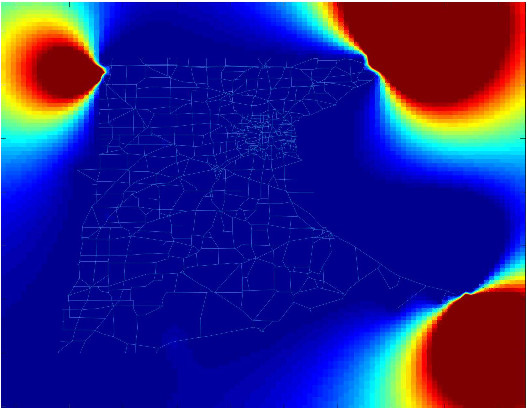}
&\includegraphics[height=0.18\textwidth]{./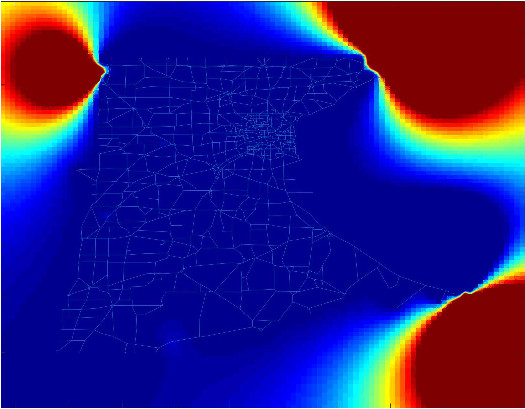}
%\includegraphics[height=0.16\textwidth]{./3D-GEOSIR-IMAGES/GEO-SIR-F.jpg}
%&\includegraphics[height=0.16\textwidth]{./3D-GEOSIR-IMAGES/GEO-SIR-E.jpg}
%&\includegraphics[height=0.16\textwidth]{./3D-GEOSIR-IMAGES/GEO-SIR-D.jpg}
%&\includegraphics[height=0.16\textwidth]{./3D-GEOSIR-IMAGES/GEO-SIR-C.jpg}
\end{tabular}
\caption{Colormap of the evolution of the variable \mbox{$I(\cdot)$} (time increases from top to bottom, and from left to right) on a graph with constant weights, according to the symmetric SEIR model in Eq. (\ref{eq:SYMMETRIC-SEIR}).\label{fig:ED-SEIR-TEST}}
\end{figure}
\section{Continuum limit of epidemiological models on graphs\label{sec:graphon}}
We now discuss the continuum limit of compartmental epidemiological models on graph sequences for the case of the SEIR model only, but we are confident our results extend to a large class of compartmental models. Considering the SEIR model defined on a sequence of graphs where the number ~$n$ of vertices diverges, we want to analyse the limit~$n \to + \infty$. To achieve this, we rely on a recently developed theory based on the notion of \emph{graphon}, which is introduced in Sect.~\ref{ss:graphon}. In Sects. ~\ref{ss:convergence},~\ref{sec:GRAPHON-LIMIT}, we discuss the continuous limit of the epidemiological model and the convergence of the SEIR model defined on a converging sequence of graphs. In Sect.~\ref{ss:samplings}, we show that a stronger notion of convergence can be recovered by introducing suitable deterministic and random samplings. Then, we present the semi-discretisation of the SEIR model on a graphon (Sect.~\ref{sec:semid}) and numerical examples (Sect.~\ref{sec:ER}). The analysis in Sects.~\ref{ss:convergence},~\ref{ss:samplings} is based on~\cite{ADS}.
\begin{figure}[t]
\centering
\begin{tabular}{ccc}
\includegraphics[height=110pt]{./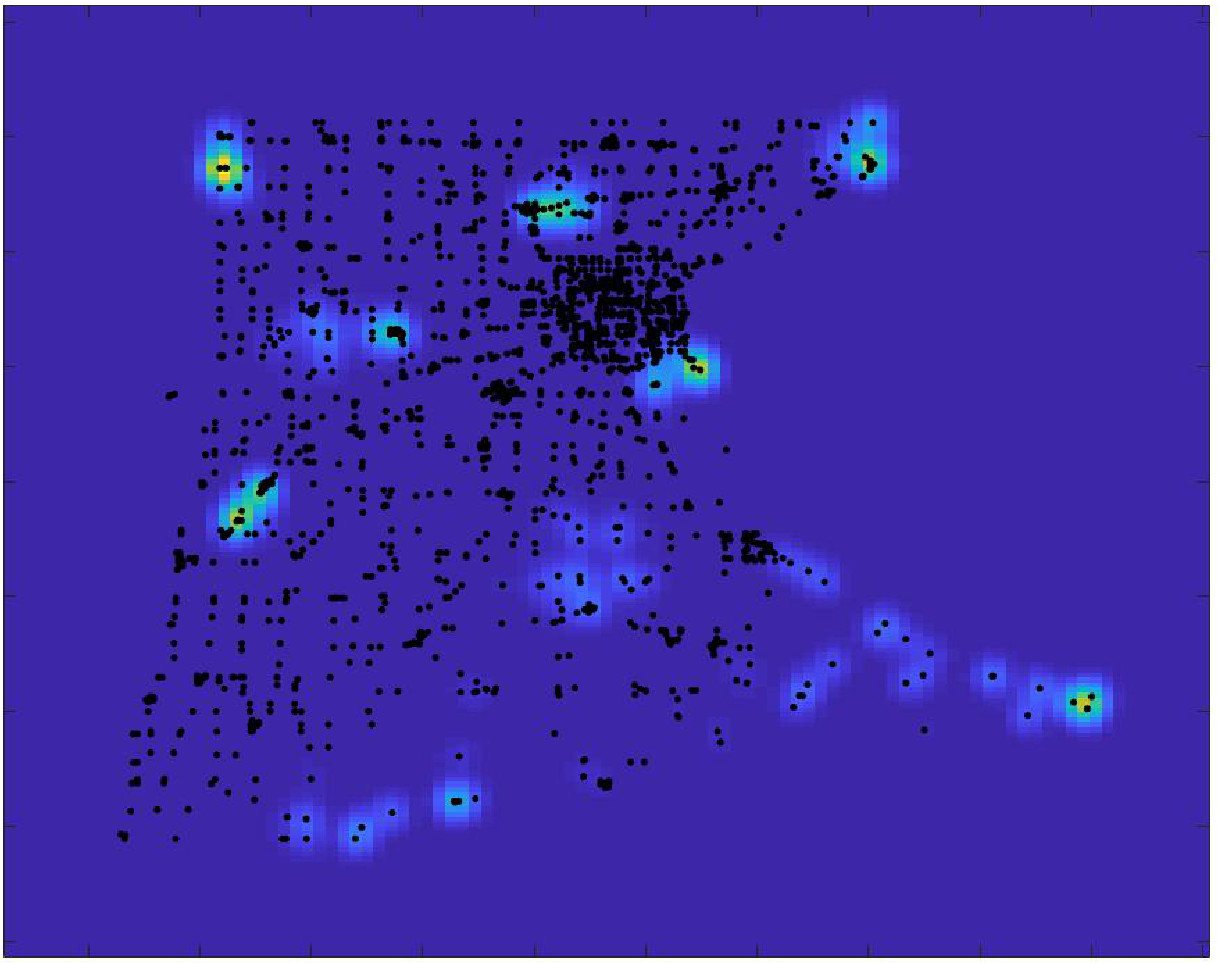}
&\includegraphics[height=110pt]{./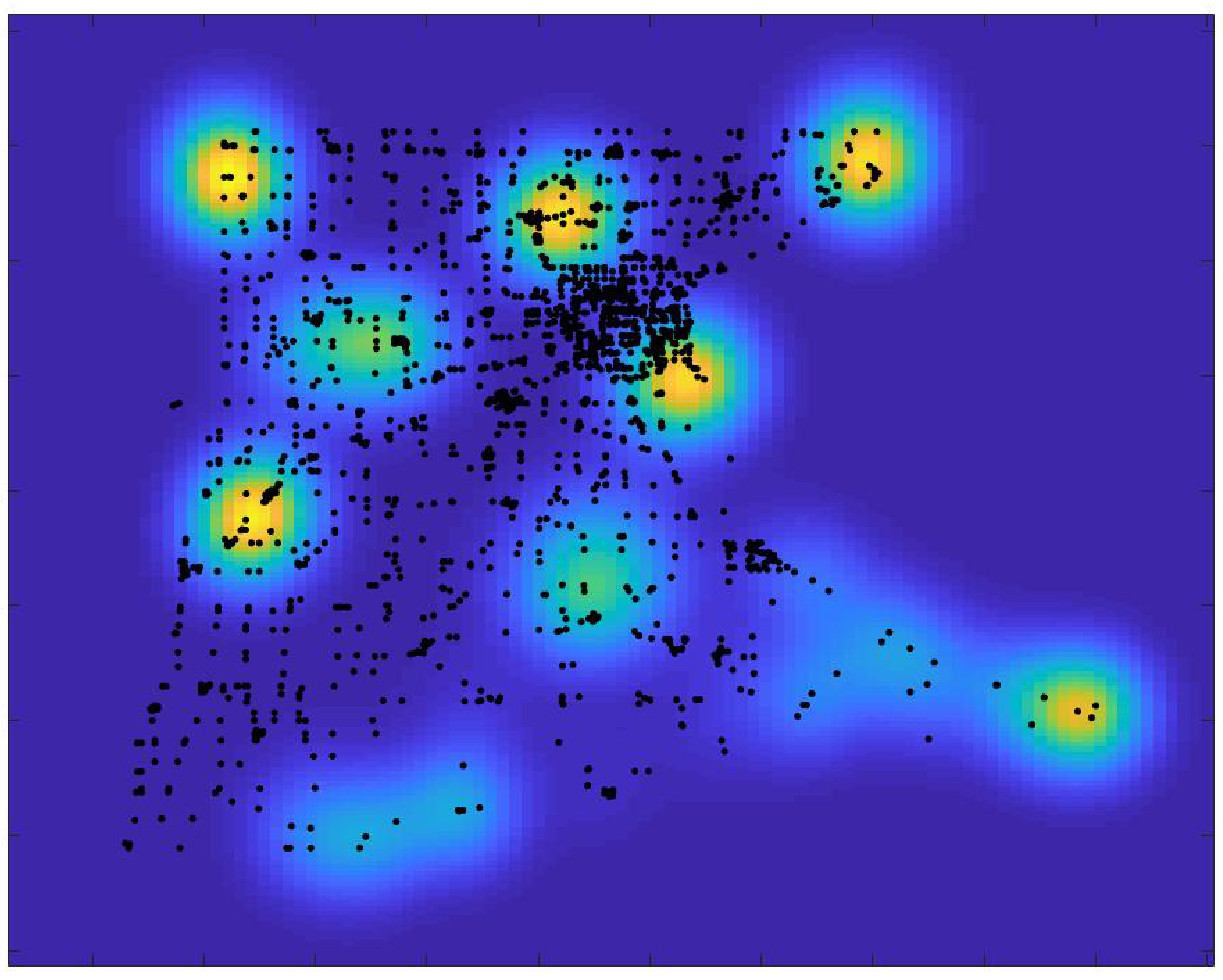}
&\includegraphics[height=110pt]{./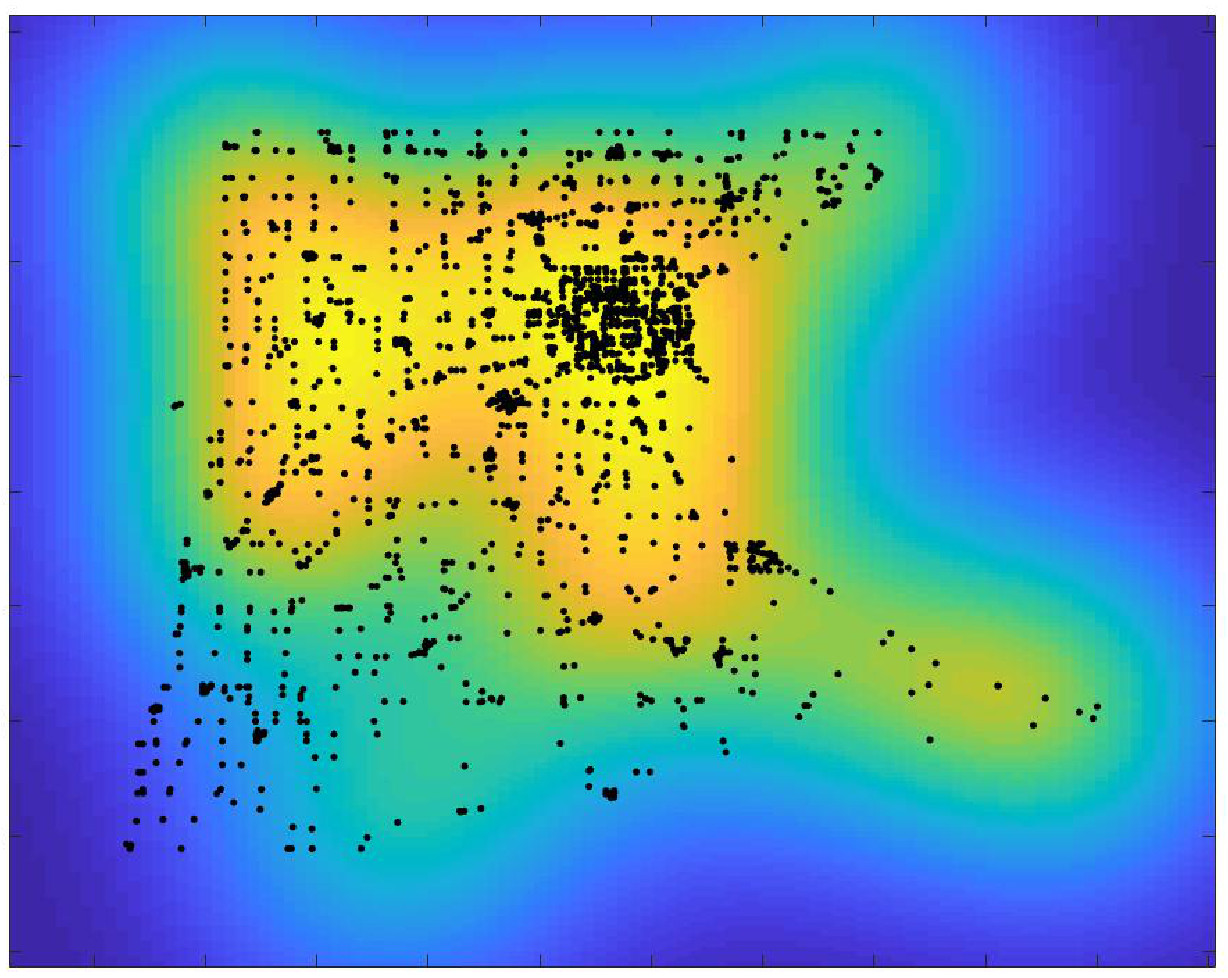}\\
\includegraphics[height=110pt]{./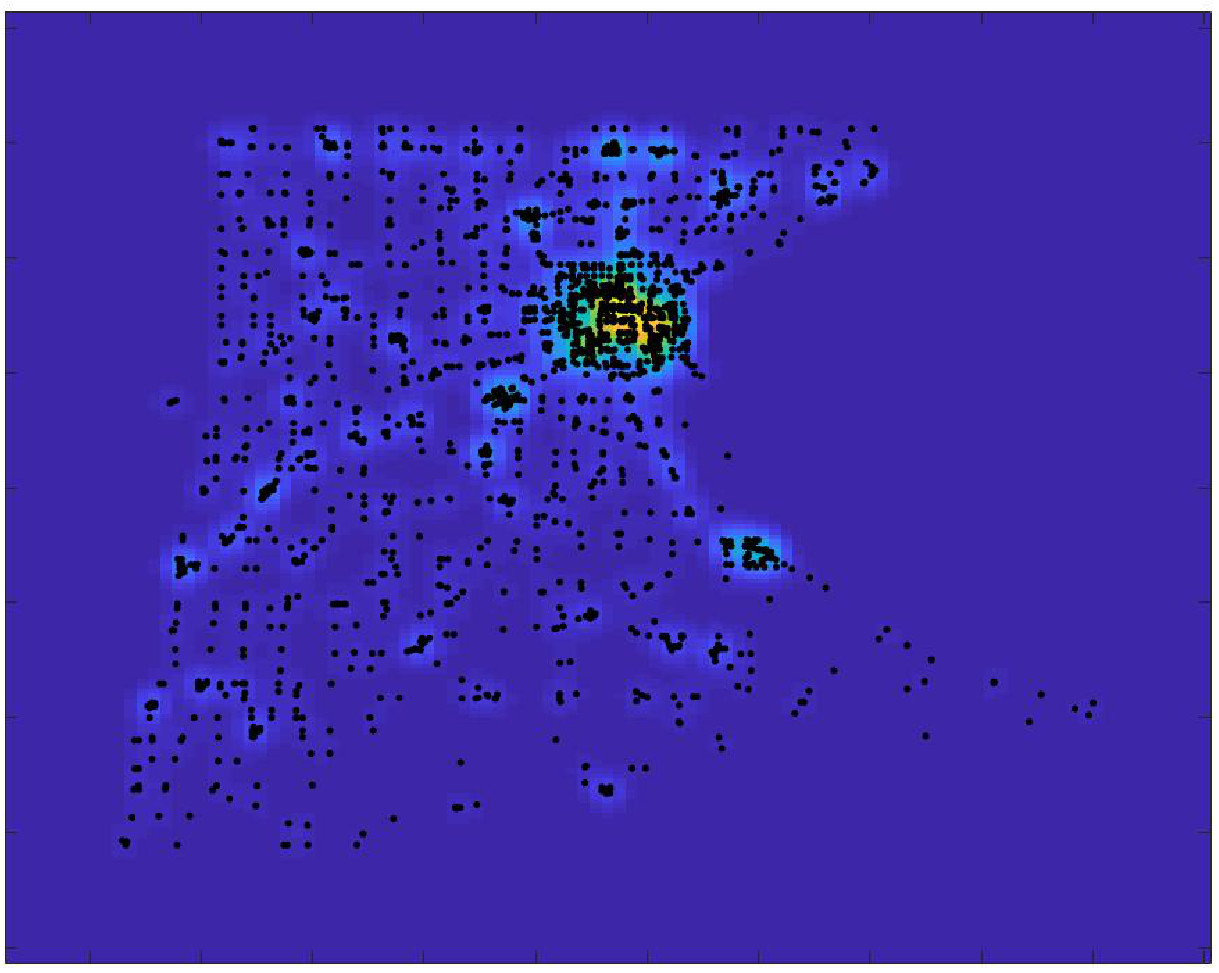}
&\includegraphics[height=110pt]{./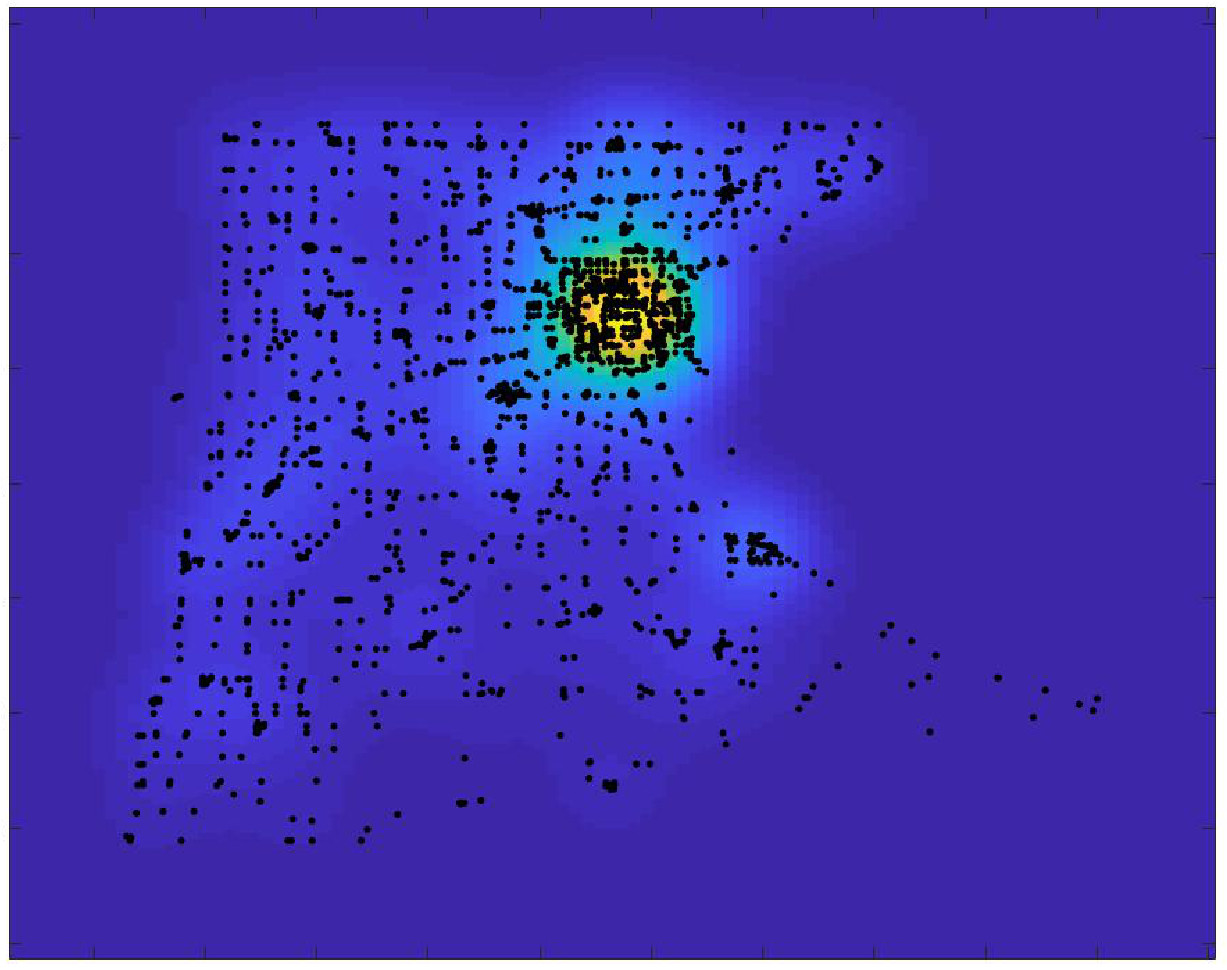}
&\includegraphics[height=110pt]{./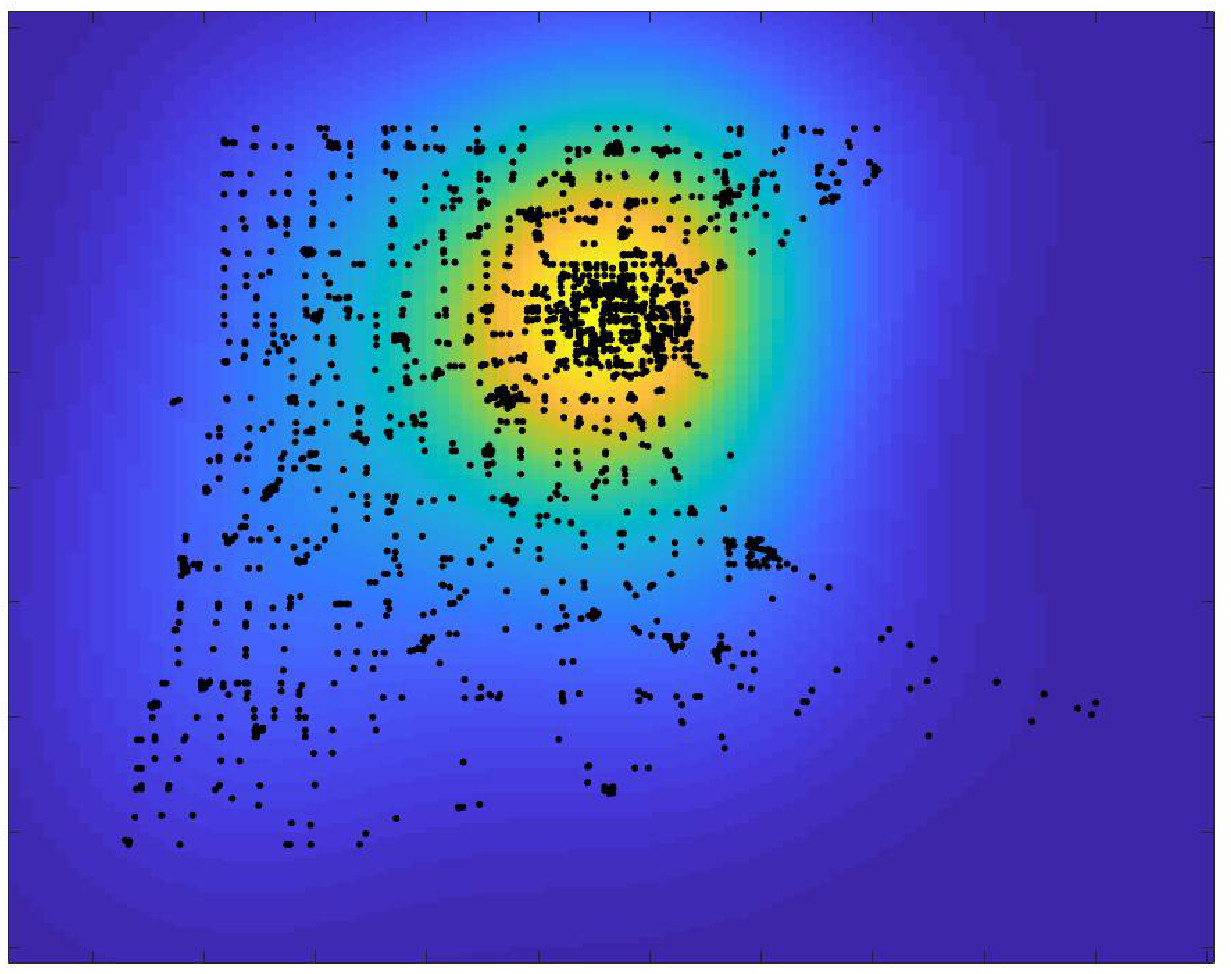}
%airport-cont-1.jpg
\end{tabular}
\caption{Colormap of the evolution of the variable \mbox{$I(\cdot)$} (time increases from left to right) on a graph with (1st row) constant weights and (2nd row) positive weights (larger weights in the selected seed point), according to the symmetric SEIR model in Eq. (\ref{eq:DIFFUSIVE-SEIR}).\label{fig:DIFFUSION-SIMULATION}}
\end{figure}
\subsection{Graphons\label{ss:graphon}}
In recent years, a new paradigm for understanding the continuum limit of graph sequences has emerged with the theory of \emph{graphons}~\cite{BCCZ18,BCCZ19,BCLSV06,BCLSV08,BCLSV12,LS07}.
\begin{definition} \label{d:graphon}
A graphon is an integrable function~$W:[0,1]^2\to\rr$ satisfying~$W(x, y) = W(y, x)$ for a.e.~$x,y\in[0,1]$.
\end{definition}
\textbf{The graphon associated with a given graph}
The basic idea underpinning the theory of graphons is to identify the set of nodes of a given graph with the interval~$[0, 1]$. In this way, we can identify the adjacency matrix of an undirected graph with a symmetric function defined on the square~$[0, 1]^2$, that is with a graphon. More precisely, consider a weighted undirected graph~$\G$, with vertices~$\{1, \dots, n\}$. Let~$(\tilde a_{jk})_{j, k=1, \dots, n}$ be its adjacency matrix. Note that we can consider fairly general adjacency matrices as we only assume that the coefficients are nonnegative and symmetric. To construct the graphon associated to~$\G$, consider a partition of~$[0, 1[$ into~$n$ isometric intervals, and term~$I^n_j$ the~$j$-th interval, that is~$I^n_j : = [(j-1)n^{-1}, j n^{-1}[$,~$j=1, \dots,n$. The graphon~$W_{\G}~:~[0, 1[ \times [0, 1[\to \rr$ associated to the graph~$\G$ is defined by setting \mbox{$$$W_{\G}(x, y) = \tilde a_{jk} \; \text{if~$(x, y) \in I^n_j \times I^n_k$}$}. Fig.,~\ref{fig1} illustrates the above notion.\\

\textbf{Converging sequences of graphons}
We define the so-called \emph{cut-norm} of a given graphon by setting
\begin{equation*}
		%\label{cut1}
		\|W\|_{\Box}:=\sup_{S,T\subseteq[0,1]}\left\vert\int_{S\times T}W(x,y)\,dxdy\right\vert , \quad
 \text{for every graphon~$W \in L^\infty ([0, 1]^2)$}.
	\end{equation*}
In the above expression, the supremum is taken over all pairs of measurable subsets~$S, T \subseteq [0,1]$. The notion of cut-norm was first introduced by Frieze and Kannan~\cite{FK99}, and its fundamental importance for the theory of graphons was recently unveiled in several papers, see in particular~\cite{BCCZ18,BCCZ19,BCLSV06,BCLSV08,BCLSV12} and the references therein. Here, we limit ourselves to some handwaving remarks. In particular, we point out that the notion of cut-norm for the graphons associated with a sequence of graphs is connected with the notion of \emph{left convergence} for graph sequences, see~\cite[Theorem 3.8]{BCLSV08}. Very loosely speaking, a sequence of graphs is left convergent if the local structure of the graphs is asymptotically stable, in the sense that asymptotically the graphs contain the ``same number of copies" of any given subgraph. For the study epidemics spread, previous work has addressed several aspects such as spectral representations of graphons~\cite{SHUANG2019}, sensitivity analysis of SIS epidemics~\cite{VIZUETE2020}, and Finite state graphon games~\cite{AURELI2022}.

\subsection{The continuum limit of epidemiological models\label{ss:convergence}}
We now discuss our main result concerning the continuum limit of the SEIR model defined on sequences of graphs. For the system \eqref{eq:SIRgraph2}, we now introduce the possibility that the parameters~$\beta$,~$\gamma$,~$\mu$, are positive time-dependent functions. In this way, it is possible to introduce in the model any changes over time due to external interventions (e.g. lockdown or social distancing) rather than the evolution of the biomedical parameters related to the epidemic. We first have to introduce some notation. Fix a graph~$\G_n$ with~$n$ vertices~$\{1, \dots, n\}$ and consider the following SEIR model defined on the set of vertices:
\begin{equation}\label{SIR_G}
		\begin{cases}
		\frac{ds_j^n(t)}{dt}=-s_j^n(t)\f1{n}\sum_{k=1}^{n}\beta_k^n(t)a_{jk}^n
		i_k^n(t)& \\
		\frac{de_j^n(t)}{dt}=s_j^n(t)\f1{n}\sum_{k=1}^{n}\beta_k^n(t)a_{jk}^n i_k^n(t)-\mu_j^n(t) e_j^n(t)
		 &\qquad j=1,\,\dots,\,n\\
		\frac{di_j^n(t)}{dt}= \mu_j^n(t) e_j^n(t)-\gamma_j^n(t) i_j^n(t) & \\
 		\frac{dr_j^n(t)}{dt}= \gamma_j^n(t) i_j^n(t) & \\
		% s_j(t)+i_j(t)+r_j(t)=1\,,\qquad \forall t\geq0\,, & \\
		s_j^n(0)=s_{j,0}^n,\,e_j^n(0)=e_{j,0}^n,\, i_j^n(0)=i_{j,0}^n,\,r_j^n(0)=r_{j,0}^n .
		\end{cases}
	\end{equation}
Also,~$(a^n_{jk})_{j, k=1, \dots, n}$ denotes the weighted adjacency matrix of the graph~$\G_n$ with~$a^n_{jj}=1$. Note that~\eqref{SIR_G} boils down to~\eqref{eq:SIRgraph2} provided the coefficients do not depend on time and
$$
 a_{jk} =
 \left\{
 \begin{array}{ll}
    \tilde a_{jk} & j \neq k \\
    1 & j=k.
 \end{array}
 \right.
$$
In the following, we use a more compact notation and denote by~$\svet^n, \evet^n,\ivet^n,\rvet^n:[0,+\infty)\to[0,1]^n$ the vectors with~$j$-th component given by~$s_j^n, e_j^n, i^n_j, r^n_j$,~$j=1, \dots, n$, respectively. We will assume the functions~$\beta_k^n(t)$,~$\gamma_j^n(t)$,~$\mu_j^n(t)$ are continuous. More general hypotheses can however be considered considering also discontinuous, but bounded, as for the theory of the Carath\'{e}odory differential equations, see e.g.~\cite{FIL88}: the following results also extend to this case. We now aim at discussing the limit~$n \to + \infty$ of~\eqref{SIR_G} by applying the theory of graphons. To achieve this, we, first of all, have to regard the functions~$\svet^n,\ivet^n,\rvet^n: \rr^+ \to[0,1]^n$ as functions defined on~$\rr^+ \times [0, 1]$. To this end, in the following, we always identify any function~$\texttt{u}: \rr^+ \to\rr^n$,~$\texttt{u}=\left(u_j\right)_{j\in V(\G_n)}$, with the piecewise constant function
\begin{equation}\label{uG->u01}
\texttt{u}_n(t, x):=\sum_{j=1}^{n}u_j (t) \mathbbm{1}_{I_j^n}(x)\,
\end{equation}
where~$\mathbbm{1}_{I_j^n}$ is the characteristic function of the interval~$I_j^n=[ (j-1)n^{-1}, jn^{-1}[$. Furthermore, for a function~$v(t,x)$,~$(t,x)\in (\mathbb{R}^+\times [0,1])$, we set~$v_j^n(t):=\langle v (t, \cdot) \rangle_{I^n_j }$, where we denote with~$\langle v \cdot \rangle_{I^n_j }$ the average of the function on the set~$I^n_j : =[(j-1)n^{-1}, jn^{-1}[$, and~$v_n(t,x)=\sum_{j=1}^{n} v_j^n (t) \mathbbm{1}_{I_j^n}(x)$. Whenever we say that a sequence of functions defined on the vertices converges to a limit function defined on~$[0,1]$, we always mean that the corresponding piecewise constant functions are defined as in \eqref{uG->u01} converge to the given limit. We are now ready to introduce the discrete graphon representation of~\eqref{SIR_G}
\begin{equation}
		\label{SEIR_N}
		\begin{cases}
		\partial_t s_n(t,x)=-s_n(t,x) \int_0^1\beta_n(t,y)W_{\G_n}(x,y)i_n(t,y)\,dy, & \\
		\partial_t e_n(t,x)=s_n(t,x),
 		{\int_0^1\beta_n(t,y)W_{\G_n}(x,y)i_n(t,y)\,dy}-\mu_n(t,x)e_n(t,x),
 		&x\in[0,1],\\
		\partial_ti_n (t,x)= \mu_n(t,x)e(t,x)-\gamma_n(t,x)i(t,x), & \\
 		\partial_tr_n(t,x)=\gamma_n(t,x)i_n(t,x), & \\
		% s(t,x)+i(t,x)+r(t,x)=1\,,\qquad\qquad\forall t\geq0\,, & \\
		s_n(0,x)=s_0(x),\,e_n(0,x)=e_0(x),\,i_n(0,x)=i_0(x),\,r_n(0,x)=r_0(x),
		\end{cases}
	\end{equation}	
where~$s_0(x),\,e_0(x),\,i_0(x),\,r_0(x)$ are suitable piecewise constant functions. The dynamics of the discrete graphon model can be studied using the original ODE system.
\begin{lemma}\label{Lem:disgraphon}
Let~$n>1$ and~$(s_j^n(t),\,e_j^n(t),\,i_j^n(t),\, r_j^n(t)),\, t\geq 0,\,j=1,...,n$ the solution of the Cauchy problem \eqref{SIR_G}, then the corresponding piecewise constant functions~$(s_n(t,x),\,e_n(t,x),\, i_n(t,x),\, r_n(t,x))$, is the solution, formulated almost everywhere, of the Cauchy problem \eqref{SEIR_N},
$t\geq 0,\,x\in [0,1]$.
\end{lemma}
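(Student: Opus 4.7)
The plan is to verify \eqref{SEIR_N} pointwise in time and almost everywhere in $x$ by exploiting the piecewise constant structure shared by $s_n,e_n,i_n,r_n,\beta_n,\gamma_n,\mu_n$ and the graphon $W_{\G_n}$ on the common partition $\{I_j^n\}_{j=1}^n$. Fix $t\ge 0$ and pick $x\in[0,1]\setminus\bigcup_j\{j/n\}$, so that there is a unique index $j$ with $x\in I_j^n$. By the definition \eqref{uG->u01} of the lift, $s_n(t,x)=s_j^n(t)$, and therefore $\partial_t s_n(t,x)$ exists and equals $\dot s_j^n(t)$, since differentiation is performed at fixed $x$ and the coefficient $\mathbbm{1}_{I_j^n}(x)$ is constant in $t$. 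The analogous identities hold for $e_n,i_n,r_n$.

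Next I would compute the nonlocal term. For $x\in I_j^n$,
\[
\int_0^1 \beta_n(t,y) W_{\G_n}(x,y) i_n(t,y)\,dy
= \sum_{k=1}^n \int_{I_k^n} \beta_k^n(t)\, a_{jk}^n\, i_k^n(t)\,dy
= \frac{1}{n}\sum_{k=1}^n \beta_k^n(t)\, a_{jk}^n\, i_k^n(t),
\]
because $\beta_n(t,y)=\beta_k^n(t)$, $i_n(t,y)=i_k^n(t)$, $W_{\G_n}(x,y)=a_{jk}^n$ are all constant on $I_k^n$, and $|I_k^n|=1/n$. Multiplying by $s_n(t,x)=s_j^n(t)$ yields exactly the right-hand side of the first equation of \eqref{SIR_G}, which equals $\dot s_j^n(t)=\partial_t s_n(t,x)$. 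The same computation with $-\mu_n(t,x)e_n(t,x)=-\mu_j^n(t)e_j^n(t)$ added handles the $e$-equation, and the $i$- and $r$-equations are even simpler since they are purely local in $x$: for $x\in I_j^n$, $\mu_n(t,x)e_n(t,x)-\gamma_n(t,x)i_n(t,x)=\mu_j^n(t)e_j^n(t)-\gamma_j^n(t)i_j^n(t)=\dot i_j^n(t)=\partial_t i_n(t,x)$, and analogously for $r_n$. The boundary points $x=j/n$ form a Lebesgue-null set, so the identities hold a.e.\ in $x$.

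Finally, for the initial data, choosing $s_0, e_0, i_0, r_0$ as the piecewise constant functions $s_0(x)=\sum_j s_{j,0}^n\,\mathbbm{1}_{I_j^n}(x)$, etc., yields $s_n(0,x)=s_0(x)$ a.e., and likewise for the other components, matching \eqref{SEIR_N}. Since the system \eqref{SEIR_N} is to be understood almost everywhere in $x$ and pointwise in $t$, this concludes the verification.

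The argument is essentially a bookkeeping exercise; the only point that requires some care is recognising that $\partial_t$ commutes trivially with the lift \eqref{uG->u01} because the spatial indicator functions $\mathbbm{1}_{I_j^n}$ are time-independent, so no distributional subtlety arises, and that the null set $\{j/n:j=0,\dots,n\}$ can be safely discarded. Uniqueness for \eqref{SEIR_N} in the class of piecewise constant functions on the partition then follows from the uniqueness of the underlying ODE solution granted by Cauchy–Lipschitz, closing the equivalence.
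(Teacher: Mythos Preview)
Your proof is correct and follows essentially the same approach as the paper's: both observe that $\partial_t s_n(t,x)$ exists and equals $\dot s_j^n(t)$ for $x\in I_j^n$, then split the integral $\int_0^1 \beta_n W_{\G_n} i_n\,dy$ over the subintervals $I_k^n$ to recover the discrete sum $\frac{1}{n}\sum_k \beta_k^n a_{jk}^n i_k^n$. The only cosmetic difference is that the paper carries the sum $\sum_j(\cdots)\mathbbm{1}_{I_j^n}(x)$ throughout rather than fixing $x\in I_j^n$ first, and it does not explicitly remark on uniqueness.
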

\begin{proof}
First, we observe that the map~$t \mapsto (s_n (t),\,e_n(t),\,i_n(t),\, r_n(t))$ is differentiable for almost everywhere~$x\in [0,1]$ and~$t>0$. Then, combining for example the~$s_j^n$ components from \eqref{SIR_G},
\begin{equation*}\label{eq:superpos}
\begin{split}
\partial_t s_n(t,x)
&= - \sum_{j=1}^{n} \left( s_j^n(t)\f1{n}\sum_{k=1}^{n}\beta_k^n(t)a_{jk}^n i_k^n(t) \right) \mathbbm{1}_{I_j^n}(x) \\
&= - \sum_{j=1}^{n} s_j^n(t) \mathbbm{1}_{I_j^n}(x) \left( \int_{(k-1)/n}^{k/n} \beta_k^n(t)a_{jk}^n i_k^n(t) \right) \\
&= - \sum_{j=1}^{n} s_j^n(t) \mathbbm{1}_{I_j^n}(x) \left( \int_0^1\beta_n(t,y)W_{\G_n}(x,y)i_n(t,y)\,dy \right) \\
&= -s_n(t,x) \int_0^1\beta_n(t,y)W_{\G_n}(x,y)i_n(t,y)\,dy.
\end{split}
\end{equation*}
The same can be done for the other components of the system \eqref{SEIR_N}. Finally, for the initial condition just use the definition of the functions~$s_n(0,x),\,e_n(0,x),\,i_n(0,x),\,r_n(0,x)$.
\end{proof}
If we consider system \eqref{SIR_G} or, equivalently, the system \eqref{SEIR_N} as a quadrature rule which uses a piecewise constant approximation, we can formally introduce the limit ``continuous counterpart'' of~\eqref{SIR_G}, or \eqref{SEIR_N}:
\begin{equation}
		\label{SIR_W}
		\begin{cases}
		\partial_t s(t,x)=-s(t,x) \int_0^1\beta(t,y)W(x,y)i(t,y)\,dy & \\
		\partial_t e(t,x)=s(t,x)
	 	\int_0^1\beta(t,y)W(x,y)i(t,y)\,dy-\mu(t,x)e(t,x)
 		& \qquad x\in[0,1]\\
		\partial_ti(t,x)= \mu(t,x)e(t,x)-\gamma(t,x)i(t,x) & \\
 		\partial_tr(t,x)=\gamma(t,x)i(t,x) & \\
		% s(t,x)+i(t,x)+r(t,x)=1\,,\qquad\qquad\forall t\geq0\,, & \\
		s(0,x)=s_0(x),\,e(0,x)=e_0(x),\,i(0,x)=i_0(x),\,r(0,x)=r_0(x)
		\end{cases}
	\end{equation}
which is the graphon SEIR model (G-SEIR model).
\begin{figure}[t]
\centering
\begin{tabular}{c}
\includegraphics[height=180pt]{./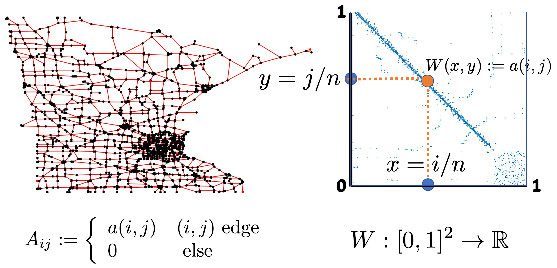}
\end{tabular}
\caption{Generation of the graphon starting from the adjacency matrix of the input graph.\label{fig1}}
\end{figure}
\subsection{Well-posedness and properties of the graphon limit\label{sec:GRAPHON-LIMIT}}
The graphon form \eqref{SIR_W} of the SEIR model is an infinite-dimensional bilinear dynamical system, where each point~$x\in [0,1]$ corresponds to a (sub-)community. While the dynamics of \eqref{SEIR_N} describes the evolution of a piecewise constant macroscopic approximation. In the following, we will make the following assumptions on the data of the graphon model \eqref{SIR_W}.
\begin{assumptionp}{G-SEIR model}\label{Ass:GSEIR} {\it Assume that the following hypothesis holds.
\begin{itemize}
\item[i)]~$W\in L^\infty ([0,1]^2)$ is a non negative graphon~$W\geq 0$ a.e. on~$[0,1]^2$, then~$W\in L^p ([0,1]^2),\, p\geq 1$. There is a constant~$K_w>0$ such that
$ess\sup |W |\leq K_w$.
\item[ii)]~$\beta,\,\gamma , \, \mu \in L^\infty (\mathbb{R}_+ \times [0,1])$, there are constants~$c_0>0,\,K_0>$ such that~$\inf \beta,\, \inf \gamma ,$
$\inf \mu \geq c_0$, and~$ess\sup \vert\beta\vert,\,ess\sup \vert\gamma\vert,\,ess\sup \vert\mu\vert\leq K_0$.
\item[iii)]~$s_{0},e_0, i_{0},r_{0} \in L^\infty([0,1])$, and ~$0 \leq s_0, e_0, i_0, r_0 \leq 1$, ~$s_{0}(x)+e_0(x) +i_{0}(x)+r_{0}(x)=1$ a.e. on~$[0,1]$.
\end{itemize}
}
\end{assumptionp}
Furthermore, we denote by~$B$ the Banach space~$B_{\infty}=(L^\infty [0,1])^4$, with norm~$\| \textbf{u}\|_{B_\infty}~=~\sum_{i=1}^{4} \| u_i \|_{L^\infty}$, where
$\textbf{u}=(u_1,\,u_2,\,u_3,\, u_4)\in B_{\infty}$ and~$\| v \|_{L^\infty} = \| v \|_{L^\infty ([0,1])} = ess\sup_{x\in [0,1]}~|v(x)|$; the positive cone in~$B_\infty$
\[
B_\infty^+ = \left\{ \textbf{u}=(u_1,\,u_2,\,u_3,\, u_4)\in B_{\infty}\,:\, u_i(x)\geq 0\,\,a.e.\, on \,[0,1] \right\},
\]
is a proper cone in~$B_\infty$ with non-empty interior; finally the space~$C^0([0,T];B_\infty )$ is the space of continuous function~$v:[0,T] \rightarrow B_\infty$ with norm
\begin{equation*}\label{eq:norm:C0B}
\| v \|_{C^0([0,T];B_\infty )} ~=~ \sup_{0\leq t \leq T}~\| v(t) \|_{B_\infty}.
\end{equation*}
Then the system \eqref{SIR_W} can be studied as an ordinary differential equation in Banach space~$B_\infty$. To demonstrate the well-posedness of the problem \eqref{SIR_W} we first consider some a-priori estimates for the solution and then we prove the existence and local uniqueness of the solution, and then extend it to a global solution for every~$t>0$. While it is straightforward to verify an upper bound for the solution, it is necessary to establish the non-negativity of the solution itself. In particular, we will prove the invariance of the cone~$B_\infty^+$ with respect to the Cauchy problem \eqref{SIR_W}.

Let~$X$ a Banach space and~$T>0$, we consider a function~$G: [0,T) \times X \rightarrow X$ and we we suppose that the solution~$u:[0,T) \rightarrow X$ of the following Cauchy problem
\begin{equation}\label{eq:general:ode}
%\left\{
%\begin{array}{ll}
u^\prime (t) = G(t,u(t)),~t\in [0,T), \qquad
u(0)=u_0\in X,
%\end{array}
%\right.
\end{equation}
exists and is unique. We recall that a set~$A\subset X$ is said to be forward invariant with respect to~$G$ if the solution~$u(t),~t\in [0,T)$ of \eqref{eq:general:ode} takes values in~$A$ provided that~$u_0\in A$ (see e.g.~\cite{ODEBanach} section 5.2). Theorems on flow-invariance are well-known in the case~$X=\mathbb{R}^n$, for the general case we refer the reader to~\cite{RedWal1975,RedWal1986,Brezis1970}. In most applications, the set~$A$ has a structure which makes the forward invariance easier to show. Suppose that the positive cone~$X_+$ is a non-empty closed subset of~$X$,~$X^*_+$ the space of all continuous linear functionals on~$X_+$, from the Theorem 1. and the property {\it (f)} of section 4 in~\cite{RedWal1975}, we can deduce the following result.
\begin{lemma}\label{lem:nonneg}
Let~$X_+$ a proper cone of~$X$ with non-empty interior, if for all~$(t,y) \in [o,T) \times \partial X_+$, and for all~$f\in X_+^*$, such that~$f(y)=0$ we have~$f(G(t,y))\geq 0$, then~$X_+$ is a forward invariant set for the Cauchy problem \eqref{eq:general:ode}.
\end{lemma}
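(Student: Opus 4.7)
The plan is to combine the dual characterisation of the cone $X_+$ with a perturbation trick that upgrades the weak subtangential hypothesis to a strict one. Since $X_+$ is a closed proper convex cone, the separation theorem gives $y \in X_+$ if and only if $f(y) \geq 0$ for every $f \in X_+^*$, so forward invariance of $X_+$ reduces to showing that $t \mapsto f(u(t))$ remains non-negative on $[0,T)$ for each $f \in X_+^*$.

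The difficulty is that the hypothesis only provides the non-strict inequality $f(G(t,y)) \geq 0$ at contact points, which does not by itself preclude a trajectory from grazing $\partial X_+$ and then escaping. To circumvent this I would introduce, for each small $\varepsilon>0$, a perturbed Cauchy problem $u_\varepsilon'(t) = G(t, u_\varepsilon(t)) + \varepsilon v_0$, $u_\varepsilon(0) = u_0 + \varepsilon v_0$, where $v_0$ is a fixed element of $\mathrm{int}(X_+)$. The crucial geometric input is that $v_0 + B_\delta(0) \subset X_+$ for some $\delta > 0$, which forces $f(v_0) \geq \delta \|f\|_*$ for every $f \in X_+^*$, and hence $f(v_0) > 0$ whenever $f \neq 0$. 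Combined with the lemma's hypothesis this yields the strict condition $f(G(t,y) + \varepsilon v_0) \geq \varepsilon f(v_0) > 0$ for every $y \in \partial X_+$ and every non-zero $f \in X_+^*$ with $f(y)=0$.

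With the strict subtangential condition in hand, a first-exit-time argument shows that $u_\varepsilon(t) \in X_+$ throughout its maximal existence interval: if $\bar t$ were the first exit time, continuity and closedness of $X_+$ give $u_\varepsilon(\bar t) \in \partial X_+$; Hahn-Banach produces a non-trivial supporting functional $f_{\bar t} \in X_+^*$ with $f_{\bar t}(u_\varepsilon(\bar t)) = 0$; and the chain rule together with the strict inequality yields $\tfrac{d}{dt} f_{\bar t}(u_\varepsilon(t))\bigl|_{t=\bar t} > 0$. A first-order expansion then contradicts the existence of a sequence $t_k \downarrow \bar t$ along which $u_\varepsilon(t_k)$ lies outside $X_+$, which is exactly the content of Theorem~1 in \cite{RedWal1975}. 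Letting $\varepsilon \to 0^+$, continuous dependence of the solution of~\eqref{eq:general:ode} on the forcing term and on the initial datum gives $u_\varepsilon \to u$ uniformly on compact sub-intervals of $[0,T)$, and the closedness of $X_+$ transfers the inclusion $u_\varepsilon(t) \in X_+$ to the limit.

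The main obstacle, and the step that requires the most care, is the exit-time contradiction in the perturbed system: one must extract from the sequence of functionals separating the escaping points from $X_+$ a non-trivial supporting functional at $u_\varepsilon(\bar t)$ compatible with the strict subtangential inequality. In a general Banach space this hinges on the cone being closed and proper with non-empty interior (so Hahn-Banach actually delivers a non-zero supporting functional) and on some weak-$\ast$ compactness of normalised separating functionals; both are available in the present setting and together with the strict inequality from the perturbation they close the argument, so that in the end the result is imported from the general flow-invariance theorem of \cite{RedWal1975,RedWal1986,Brezis1970} rather than being reproved from scratch.
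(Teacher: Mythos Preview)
The paper does not actually prove this lemma: immediately before the statement it records that the result ``can be deduced'' from Theorem~1 and property~(f) of Section~4 in~\cite{RedWal1975}, and then simply states the lemma. Your proposal is consistent with this, since you likewise end by importing the conclusion from~\cite{RedWal1975,RedWal1986,Brezis1970}; the preceding paragraphs correctly sketch the standard perturbation and first-exit-time mechanism behind those references (shift by an interior point of the cone to upgrade the subtangential condition to a strict one, rule out escape by a supporting-functional argument, then let the perturbation vanish using continuous dependence). So your approach and the paper's coincide at the level of logical structure---both appeal to Redheffer--Walter---with the only difference that you spell out the skeleton of that argument while the paper merely cites it.
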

Let~$C^1(\mathbb{R}_+;B_\infty)$ the Banach space of differentiable function~$v:\mathbb{R}_+ \rightarrow B_\infty$, we state our main result about the Cauchy problem
\eqref{SIR_W}.
\begin{theorem}\label{teo:SEIR:W}
Let's assume that hypotheses {\bf (\ref{Ass:GSEIR})} hold. Then there exists a unique solution~$(s,~e,~i,~r)\in C^1(\mathbb{R}_+;B_\infty)$ of the Cauchy problem \eqref{SIR_W}.
\end{theorem}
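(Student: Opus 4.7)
The plan is to recast the system \eqref{SIR_W} as an abstract Cauchy problem $u'(t) = G(t, u(t))$, $u(0) = u_0$ in the Banach space $B_\infty = (L^\infty[0,1])^4$, where $u = (s,e,i,r)$ and $G$ reads off the four right-hand sides. The first task is to show that $G$ is well-defined and locally Lipschitz. Under Assumption~\ref{Ass:GSEIR}, the integral operator $T_{W,\beta}[i](x) := \int_0^1 \beta(t,y) W(x,y) i(y)\,dy$ maps $L^\infty[0,1]$ into itself with operator norm bounded by $K_w K_0$; the nonlinear coupling $s \cdot T_{W,\beta}[i]$ is therefore a bounded bilinear map on $B_\infty$, and on any ball $\{\|u\|_{B_\infty} \le M\}$ satisfies $\|s_1 T_{W,\beta}[i_1] - s_2 T_{W,\beta}[i_2]\|_{L^\infty} \le K_w K_0 M (\|s_1-s_2\|_{L^\infty} + \|i_1-i_2\|_{L^\infty})$. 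The remaining linear terms $\mu e$ and $\gamma i$ contribute Lipschitz constants bounded by $K_0$. The classical Picard--Lindel\"of theorem in Banach spaces then yields a unique local solution $u \in C^1([0,T_0); B_\infty)$ for some $T_0 > 0$.

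The second step is to derive a uniform a-priori bound on $u$, which will preclude finite-time blow-up and allow continuation to $\mathbb{R}_+$. Summing the four equations in \eqref{SIR_W} gives $\partial_t(s + e + i + r) = 0$, so the conservation law $s(t,x) + e(t,x) + i(t,x) + r(t,x) = 1$ holds a.e.\ and for all $t \in [0, T_0)$. Combined with non-negativity, this yields $0 \le s, e, i, r \le 1$ a.e.\ and $\|u(t)\|_{B_\infty} \le 4$. Non-negativity itself can be established in either of two ways. The direct route is Lemma~\ref{lem:nonneg}: the positive cone $B_\infty^+$ is proper with non-empty interior, and on $\partial B_\infty^+$ one checks that if the $s$-component vanishes then $\partial_t s = 0$, if $e$ vanishes then $\partial_t e = s T_{W,\beta}[i] \ge 0$ (using $s, i \ge 0$), if $i$ vanishes then $\partial_t i = \mu e \ge 0$, and if $r$ vanishes then $\partial_t r = \gamma i \ge 0$, so the required tangency inequality $f(G(t,u)) \ge 0$ holds for every admissible positive functional. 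Alternatively, one can adapt the perturbation argument of Lemma~\ref{l:bounds}, adding a small $\varepsilon > 0$ to the $e$, $i$, $r$ equations, proving strict componentwise positivity of the perturbed flow using the uniqueness part of Picard--Lindel\"of at the first hitting time of zero, and then letting $\varepsilon \to 0^+$ via continuous dependence.

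Once the a-priori bound $\|u(t)\|_{B_\infty} \le 4$ is available, global existence follows from the standard maximality argument: if $[0, T_{\max})$ were the maximal interval of existence with $T_{\max} < +\infty$, the solution would have to escape every bounded set in $B_\infty$ as $t \to T_{\max}^-$, contradicting the uniform bound; hence $T_{\max} = +\infty$ and $u \in C^1(\mathbb{R}_+; B_\infty)$. The main obstacle I anticipate lies in making the non-negativity step fully rigorous in $L^\infty$: because the dual of $L^\infty$ is not concretely identifiable with $L^1$ (it involves finitely additive measures on $[0,1]$), the verification of the tangency condition in Lemma~\ref{lem:nonneg} requires some care about which positive functionals can support $\partial B_\infty^+$. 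For this reason I would favour the perturbation approach, which is pointwise a.e.\ in $x$ and sidesteps any dual-cone subtleties, with Lemma~\ref{lem:nonneg} serving as conceptual justification rather than the working tool.
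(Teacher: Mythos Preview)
Your proposal is correct and follows the same overall architecture as the paper: local Lipschitz estimates on $G$ give local well-posedness via Picard--Lindel\"of in $B_\infty$, the conservation law $s+e+i+r=1$ together with non-negativity yields the a-priori bound $\|u(t)\|_{B_\infty}\le 4$, and standard continuation then furnishes the global solution. The only divergence is in the non-negativity step. The paper does commit to Lemma~\ref{lem:nonneg}, and it resolves precisely the dual-cone worry you raised with the following device: if $f$ is a positive continuous functional on $L^\infty$ with $f(v)=0$ for some $v\ge 0$, then for any $z\in L^\infty$ the order sandwich $-\|z\|_{L^\infty}\,v \le zv \le \|z\|_{L^\infty}\,v$ forces $f(zv)=0$. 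Since each component of $G(t,u)$ decomposes into terms of the form $z\cdot v$ (with $v$ the boundary component and $z\in L^\infty$) plus terms that lie in $L^\infty_+$ because the remaining components are non-negative, the tangency inequality $f(G(t,u))\ge 0$ follows without ever identifying $(L^\infty)^*$. This order-theoretic trick makes Lemma~\ref{lem:nonneg} the cleaner route here; your perturbation alternative can also be made to work, but executing the first-hitting-time argument at the level of essential infima in $L^\infty$ (where the infimum need not be attained and pointwise-in-$x$ reasoning is entangled with the nonlocal coupling through $i$) is at least as delicate as the dual-cone verification you were hoping to sidestep.
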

\begin{proof}
\textit{Local existence and uniqueness.} The system \eqref{SIR_W} is in the form of \eqref{eq:general:ode} where~$X=B_\infty$ and for~$\textbf{u}=(s,~e,~i,~r)$
{\small{
\begin{equation*}\label{eq:def:Gu}
G(t,\textbf{u})=
 \left[
\begin{array}{c}
G_1(t,\textbf{u}) \\
G_2(t,\textbf{u}) \\
G_3(t,\textbf{u}) \\
G_4(t,\textbf{u})
\end{array}
\right]
~=~
 \left[
\begin{array}{l}
 -s(t,x) \int_0^1\beta(t,y)W(x,y)i(t,y)\,dy \\
 s(t,x) \int_0^1\beta(t,y)W(x,y)i(t,y)\,dy-\mu(t,x)e(t,x) \\
 \mu(t,x)e(t,x)-\gamma(t,x)i(t,x) \\
 \gamma(t,x)i(t,x)
\end{array}
\right].
\end{equation*}}}
Let~$\mathbf{u}=((s,~e,~i,~r),\,\mathbf{\hat{u}}=((\hat{s},~\hat{e},~\hat{i},~\hat{r})\in B_\infty$ such that~$\|\mathbf{u} \|_{B_\infty},\,\| \mathbf{\hat{u}} \|_{B_\infty} \leq \bar{C}$ for a suitable constant~$\bar{C}>0$. Then,
\begin{equation*}
\begin{split}
&\vert G_1(t,\mathbf{u}) -G_1 (t,\mathbf{\hat{u}})\vert\\
&= \left\vert -s(t,x) \int_0^1\beta(t,y)W(x,y)i(t,y)\,dy + \hat{s}(t,x)\int_0^1\beta(t,y)W(x,y)\hat{i}(t,y)\,dy \right\vert \\
&\leq \left\vert -s(t,x) \int_0^1\beta(t,y)W(x,y)i(t,y)\,dy + \hat{s}(t,x) \int_0^1\beta(t,y)W(x,y){i}(t,y)\,dy \right\vert \\
&+ \left\vert -\hat{s}(t,x) \int_0^1\beta(t,y)W(x,y)i(t,y)\,dy + \hat{s}(t,x) \int_0^1\beta(t,y)W(x,y){i}(t,y)\,dy \right\vert \\
&\leq K_w \bar{C} K_0 ( \| s - \hat{s}\|_{L^\infty} + \| i - \hat{i}\|_{L^\infty}).
\end{split}
\end{equation*}
Similarly for the other components of~$G$,
\[
\begin{array}{l}
 \vert G_2(t,\mathbf{u}) -G_2 (t,\mathbf{\hat{u}})\vert \leq K_w \bar{C} K_0 ( \| s - \hat{s}\|_{L^\infty} + \| i - \hat{i}\|_{L^\infty}) + K_0 \| e - \hat{e}\|_{L^\infty}, \\
 \vert G_3(t,\mathbf{u}) -G_3 (t,\mathbf{\hat{u}})\vert \leq K_0 ( \| e - \hat{e}\|_{L^\infty} + \| i - \hat{i}\|_{L^\infty})\\
 \vert G_4(t,\mathbf{u}) -G_4 (t,\mathbf{\hat{u}})\vert \leq K_0 \| i - \hat{i}\|_{L^\infty}
\end{array}
\]
therefore the following estimate holds uniformly with respect to time,
\begin{equation*}\label{eq:lip:G}
\| G(t,\mathbf{u}) -G (t,\mathbf{\hat{u}})\|_{B_\infty} \leq C^*~\| \mathbf{u} - \mathbf{\hat{u}}\|_{B_\infty},
\end{equation*}
where~$C^*=\max \{ K_w\bar{C}K_0,~K_0\}$. Then~$G$ is a local Lipschitz function and we have the existence and uniqueness of a local solution~$\mathbf{u}(t)$, for~$t\in [0,\tau ]$, with~$\tau >0$ for the Cauchy problem \eqref{SIR_W} (see e.g. Section 1 of~\cite{ODEBanach}). Moreover the solution~$\mathbf{u}\in C^1([0,\tau ]; B_\infty)$.

\textit{Global solution,~$t\in\mathbb{R}_+$.} Adding the equations of the system \eqref{SIR_W} we obtain
\[
\partial_t (s(t,x)+e(t,x)+i(t,x)+r(t,x))=0\,\, a.e.\,\,on\,\, [0,1].
\]
From the assumptions on the initial data, it can be concluded that
\[
s(t,x)+e(t,x)+i(t,x)+r(t,x)=1\,\, a.e.\,\,on\,\, [0,1].
\]
Applying Lemma~\ref{lem:nonneg} with~$X=B_\infty$ and~$X^+=B_\infty^+$ it is possible to show that the components of the solution are almost everywhere non-negative. First of all, a null component~$s,~e,~i,~r$ belongs to the boundary~$\partial B_\infty^+$ of~$B_\infty^+$, and~$G(t,\mathbf{u})$ can be rewritten as a sum of vectors with all but one null component. For a single function~$v\in L^\infty_+$, the positive cone of the space~$L^\infty$, and a continuous linear functional~$f$ on~$L^\infty_+$ such that~$f(v)=0$, then for all~$z\in L^\infty$, we have~$f(z~v)=0$. Since~$v$ is a.e. non-negative, we have
\[
-\| z \|_{L^\infty}~v~\leq z~v \leq ~\| z \|_{L^\infty}~v,\qquad
-\| z \|_{L^\infty}~f(v)~\leq f(z~v) \leq ~\| z \|_{L^\infty}~f(v),
\]
but~$f(v)=0$, then~$f(z~v)=0$. For example, for the component~$G_1$, and other equal to zero, if~$s \geq 0$ a.e.,~$-(\int_0^1\beta(t,y)W(x,y)i(t,y)\,dy)\in L^\infty [0,1]$, and the value of the functional is equal to zero. The same happens for all the terms of the decomposition of~$G (t,\mathbf{u})$, then the hypotheses of the Lemma~\ref{lem:nonneg} are verified and the positive cone~$B_\infty^+$ is a set forward invariant for the Cauchy problem \eqref{SIR_W}. As a consequence~$0 \leq s(t,x),\, e(t,x),\, i(t,x),\, r(t,x)$ almost everywhere, so the solution is uniformly bounded and we can define it globally for each~$t\in\mathbb{R}_+$.
\end{proof}
\begin{rem}
We point out that it is not possible to apply techniques based on monotone~\cite{Show1997} or quasi-monotone operators~\cite{Volk1972} because of the sign in the term of the first two components of~$G$. In the case of a SIR model, without the Exposed sub-population, if we do not consider the Removed class it is possible to rewrite the final system with a quasi-monotone operator. We also observe that Theorem~\ref{teo:SEIR:W} can be applied also to the discrete graphon model \eqref{SEIR_N} with minor changes.
\end{rem}
We now consider the convergence of the solutions of the discrete graphon model \eqref{SEIR_N} (or equivalently of the solutions of the discrete model \eqref{SIR_G}) to the solutions of the continuous graphon model \eqref{SIR_W}. Since we consider non-negative graphs we will adopt for convergence instead of the cut-norm \eqref{cut1} the~$L^1$ norm. For bounded graphon~$W$ the following inequalities hold (see~\cite{LO12}, section 12.4),
\[
\|W\|_{\Box} \leq \| W \|_{L^1}\leq \| W \|_{L^2} \leq \| W \|_{L^1}^{1/2}
\]
and convergence with respect to the~$L^1$ norm implies convergence with respect to the cut-norm. Moreover for a~$0-1$ valued graphon~$W$ which is close to our graphon for our epidemiological application, and for a sequence~$\{W_n\}$ of graphons such that~$\|W_n -W \|_{\Box} \rightarrow 0$, then~$\|W_n -W \|_{L^1} \rightarrow 0$ (\cite{LO12}, Proposition 8.24).
\begin{theorem}\label{t:grafoni1} Let~$T>0$, and~$(\G_n)_{n}$ be a sequence of undirected graphs with vertices~$\{1, \dots, n\}$ such that the entries~$a_{jk}^n$ of the~$n$-th weighted adjacency matrix satisfy the following assumptions: i)~$a_{jk}^n=a_{kj}^n$ for every~$j, k=1, \dots, n$; ii)~$a_{jk}^n\geq0$, for every~$j, k=1, \dots, n$; iii) there is~$K_G>0$ such that
 \begin{equation}\label{bound_deg_G}
 \sup_{n\in\nn} \sup_{j=1, \dots, n}\f{1}{n}\sum_{k=1}^na_{jk}^n \leq K_G.
 \end{equation}	
Also, assume that there is a nonnegative graphon~$W\in L^\infty \left( [0,1]^2\right)$ such that
\begin{equation}\label{e:conv_tloc1}
 \left\| W_{\G_n}-W\right\|_{L^1} \to 0\qquad\text{ as }n\to+\infty\,.
 \end{equation}
 Assume furthermore that~$s_{j, 0}^n,e_{j,0}^n,i_{j,0}^n,r_{j,0}^n\geq0$ satisfy~$s_{j,0}^n+e_{j,0}^n+i_{j,0}^n+r_{j,0}^n=1$, that
 ~$$
 \| \beta^n_j \|_{L^\infty(\rr^+)}, \| \mu^n_j \|_{L^\infty(\rr^+)}, \| \gamma^n_j \|_{L^\infty(\rr^+)} \leq M, \; \text{for every~$j=1, \dots, n$ and~$n \in \mathbb N$}
 ~$$
 for a suitable constant~$M>0$ and that
 ~$$
 \beta^n \to \beta, \; \gamma^n \to \gamma, \; \mu^n \to \mu \;
 \text{strongly in~$L^1 (\mathbb R_+ \times [0, 1])$},
 ~$$
 for some~$\beta, \gamma, \mu \in L^\infty (\mathbb R_+ \times [0, 1])~$ such that~$\inf \beta ,\,\inf \gamma , \, \inf \mu >0$.
Let~$\mathbf{u}_n = (s_n,~e_n,~i_n,~r_n)$,~$n=1,2,\ldots$, the solution of the discrete graphon Cauchy problem \eqref{SEIR_N} for~$t\in [0.T]$, with initial data
$s_n(0,x),~e_n(0,x),~i_n(0,x),~r_n(0,x) \in L^\infty([0,1])$, such that~$0 \leq s_n(0,x), e_n(0,x), i_n(0,x), r_n(0,x) \leq 1$, ~$s_n(0,x)+e_n(0,x) +i_n(0,x)+r_n(0,x)=1$ a.e. on~$[0,1]$. Suppose that
\[
 s_n(0,x) \to s_0(x) , \, e_n(0,x) \to e_0(x) , \, i_n(0,x) \to i_0(x) , \, r_n(0,x) \to r_0(x) , \,
 \text{in~$L^\infty ([0, 1])$},
\]
with~$0 \leq s_0, e_0, i_0, r_0 \leq 1$, ~$s_0(x)+e_0(x) +i_0(x)+r_0(x)=1$ a.e. on~$[0,1]$. Then the sequence~$\{ \mathbf{u}_n \}$ converges to the solution~$\mathbf{u}$ of the continuous graphon Cauchy problem \eqref{SIR_W},~$t\in [0.T]$, in the norm of the space~$C^0([0,T];B_\infty )$.
\end{theorem}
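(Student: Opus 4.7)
The plan is to derive a Gronwall-type inequality for $\mathbf{v}_n(t) := \mathbf{u}_n(t) - \mathbf{u}(t)$ in the $B_\infty$ norm, then conclude by integration. Existence, uniqueness and the a priori bound $0 \leq s_n, e_n, i_n, r_n \leq 1$ for each $n$ follow from Theorem \ref{teo:SEIR:W}, applied verbatim to the continuous graphon problem \eqref{SIR_W} and, with essentially no change, to the discrete graphon problem \eqref{SEIR_N}. First I would rewrite both systems in integral form and subtract component by component; introducing the kernels $\mathcal{K}_n(\tau,x):=\int_0^1 \beta_n(\tau,y) W_{\G_n}(x,y) i_n(\tau,y)\,dy$ and $\mathcal{K}(\tau,x)$ defined analogously, the $s$-component reads
$$s_n(t,x) - s(t,x) = \bigl(s_n(0,x)-s(0,x)\bigr) - \int_0^t \bigl[(s_n-s)\mathcal{K}_n + s(\mathcal{K}_n-\mathcal{K})\bigr](\tau,x)\,d\tau,$$
and the standard add-and-subtract decomposition gives
$$\mathcal{K}_n-\mathcal{K} = \int_0^1 \beta_n W_{\G_n}(i_n-i)\,dy + \int_0^1 \beta_n(W_{\G_n}-W)i\,dy + \int_0^1 (\beta_n-\beta)Wi\,dy.$$
The $e$-, $i$- and $r$-equations are treated in the same way, with $(\mu_n-\mu)e$ and $(\gamma_n-\gamma)i$ contributing additional structural remainders.

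Second, I would exploit the a priori bounds. Since every component of $\mathbf{u}_n$ and $\mathbf{u}$ lies in $[0,1]$, since $\beta_n, \mu_n, \gamma_n$ and their limits are bounded by $M$, and since \eqref{bound_deg_G} together with Assumption \ref{Ass:GSEIR} gives $\esssup_x \int_0^1 W_{\G_n}(x,y)\,dy \leq K_G$ and the analogous bound for $W$, one has $|\mathcal{K}_n|, |\mathcal{K}| \leq M K_G$. Taking $\esssup_x$ of each integral identity, isolating the ``diagonal'' pieces that depend on $\mathbf{v}_n$ itself, and summing the four components produces
$$\|\mathbf{v}_n(t)\|_{B_\infty} \leq \|\mathbf{v}_n(0)\|_{B_\infty} + C \int_0^t \|\mathbf{v}_n(\tau)\|_{B_\infty}\,d\tau + R_n(t),$$
with $C=C(M,K_G)$ and $R_n(t)$ collecting the structural remainders that do not contain $\mathbf{v}_n$. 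Gronwall's lemma then delivers $\|\mathbf{v}_n\|_{C^0([0,T];B_\infty)} \leq \bigl(\|\mathbf{v}_n(0)\|_{B_\infty} + \sup_{[0,T]}R_n\bigr)e^{CT}$, and $\|\mathbf{v}_n(0)\|_{B_\infty}\to 0$ by hypothesis.

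The genuine obstacle is to show $\sup_{t \in [0,T]} R_n(t) \to 0$. Using the $L^\infty$ bounds on $i$ and on the coefficients, this reduces to controlling quantities of the form $\esssup_x \int_0^T\!\int_0^1 |W_{\G_n}-W|(x,y)\,dy\,d\tau$ together with $\esssup_x \int_0^T\!\int_0^1 W(x,y)\,|\beta_n-\beta|(\tau,y)\,dy\,d\tau$ and its $\mu$- and $\gamma$-analogues. The subtlety is that plain $L^1([0,1]^2)$ convergence $\|W_{\G_n}-W\|_{L^1}\to 0$ does not by itself control the $L^\infty_x L^1_y$ norm that the $B_\infty$ setting requires. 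To bridge this gap I would exploit the piecewise-constant structure of $W_{\G_n}$ on the product partition $\{I_j^n \times I_k^n\}$, so that $\esssup_x$ reduces to a finite maximum over $n$ intervals, combined with the degree cap \eqref{bound_deg_G} acting as an equi-integrability condition; the $L^1$-convergences of $W_{\G_n}$, $\beta_n$, $\mu_n$, $\gamma_n$ can then be upgraded to the required uniform bound. Once $\sup_{[0,T]}R_n\to 0$ is established, the Gronwall estimate closes the argument.
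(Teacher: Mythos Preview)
Your overall strategy---integral (mild) formulation, add-and-subtract decomposition of the nonlinear term, then Gronwall---is exactly the route the paper takes, and your intermediate estimates match theirs line for line. In one respect you are actually \emph{more} careful than the paper: you correctly isolate the real difficulty, namely that working in $B_\infty = (L^\infty[0,1])^4$ forces the structural remainder $R_n$ to be controlled in an $L^\infty_x L^1_y$-type norm, whereas hypothesis~\eqref{e:conv_tloc1} only supplies $L^1([0,1]^2)$ convergence. The paper's proof simply records the remainder as ``$\|\beta W - \beta_n W_{\G_n}\|_{L^1}$'' inside the Gronwall inequality and asserts it tends to zero, without distinguishing the two norms or explaining how the $\esssup_x$ is absorbed.

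That said, your proposed resolution of this gap does not go through. The piecewise-constant structure of $W_{\G_n}$ together with the degree cap~\eqref{bound_deg_G} is \emph{not} enough to upgrade $L^1([0,1]^2)$ convergence to $L^\infty_x L^1_y$ convergence. A concrete obstruction: take $W\equiv 0$ and let $\G_n$ be the star on $n$ vertices (node $1$ joined to every other node, so $a^n_{1k}=a^n_{k1}=1$ and $a^n_{jk}=0$ otherwise). Then $W_{\G_n}\to 0$ in $L^1([0,1]^2)$, the degree bound holds with $K_G=1$, and $W_{\G_n}$ is $\{0,1\}$-valued; yet for every $n$ and every $x\in I_1^n$ one has $\int_0^1 W_{\G_n}(x,y)\,dy = 1$, so $\esssup_x \int_0^1 |W_{\G_n}-W|(x,y)\,dy = 1 \not\to 0$. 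The ``equi-integrability'' heuristic cannot save this, because the failure is not lack of integrability in $y$ but lack of uniformity in $x$ on a set of vanishing measure---precisely what the $L^\infty_x$ norm detects and the $L^1$ hypothesis cannot see. So the final sentence of your sketch (``the $L^1$-convergences \dots\ can then be upgraded to the required uniform bound'') is where the argument breaks, and the paper's own proof does not close this point either.
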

\begin{proof}
The assumptions {\bf (\ref{Ass:GSEIR})} are verified. The Cauchy problems \eqref{SIR_W} and \eqref{SEIR_N} can be rewritten equivalently as integral equations (mild solution),
\begin{equation*}\label{eq:mild:solution}
\mathbf{u}(t,x)= \mathbf{u}(0,x)+\int_{0}^{t} G(\tau ,\mathbf{u}(\tau ,x)) d\tau,\qquad
\mathbf{u}_n (t,x)= \mathbf{u}_n (0,x)+\int_{0}^{t} G_n(\tau,\mathbf{u}_n(\tau,x)) d\tau ,
\end{equation*}
where
\begin{equation*}\label{eq:def:Gn}
G_n(t,\textbf{u}_n)=
 \left[
\begin{array}{c}
G_{n1}(t,\textbf{u}) \\
G_{n2}(t,\textbf{u}) \\
G_{n3}(t,\textbf{u}) \\
G_{n4}(t,\textbf{u})
\end{array}
\right]
~=~
 \left[
\begin{array}{l}
 -s_n(t,x) \int_0^1\beta_n(t,y)W_{\G_n}(x,y)i_n(t,y)\,dy \\
 s_n(t,x) \int_0^1\beta_n(t,y)W_{\G_n}(x,y)i_n(t,y)\,dy-\mu_n(t,x)e_n(t,x)\\
 \mu_n(t,x)e_n(t,x)-\gamma_n(t,x)i_n(t,x) \\
 \gamma_n(t,x)i_n(t,x)
\end{array}
\right].
\end{equation*}
In the following, we will omit explicitly writing the variables~$x$ and~$y$ to simplify the notation when the context is clear.
Subtracting component by component~$\mathbf{u}$ from~$\mathbf{u}_n$ as mild solutions we have,
\begin{equation*}\label{eq:u:meno:un}
{\small{
\begin{array}{l}
s(t)-s_n(t) = (s(0)-s_n(0))+{\int_{0}^{t} \left[s_n(\tau) \int_0^1\beta_n(\tau)W_{\G_n} i_n(\tau )\,dy-
 s(\tau) \int_0^1\beta (\tau)W i (\tau )\,dy \right] d\tau} , \\
\\
e(t)-e_n(t) = (e(0)-e_n(0))+\int_{0}^{t} \left[s(\tau) \int_0^1\beta(\tau)W i(\tau )\,dy-
 s_n(\tau) \int_0^1\beta (\tau)W_{\G_n} i_n (\tau )\,dy \right] d\tau\\
~~~~~~~~~~~+\int_{0}^{t}\left[ \mu_n(\tau )e_n(\tau )-\mu (\tau )e (\tau)\right] d\tau, \\
\\
i(t)-i_n(t) = (i(0)-i_n(0))+\int_{0}^{t} \left[ (\mu (\tau )e (\tau )-\mu_n(\tau )e_n(\tau )) +(\gamma_n(\tau )i_n(\tau )-\gamma (\tau )i (\tau )) \right] d\tau, \\
r(t)-r_n(t) = (i(0)-i_n(0))+\int_{0}^{t} \left[ \gamma (\tau )i (\tau )- \gamma_n(\tau )i_n(\tau ) \right] d\tau.
\end{array}}}
\end{equation*}
For the first component, we have the following estimate,
\begin{equation*}
{\small{\begin{split}
&\vert s(t)-s_n(t)\vert \leq \vert s(0)-s_n(0)\vert+\int_{0}^{t} \left\vert s_n \int_0^1\beta_n W_{\G_n} i_n \,dy-
 s \int_0^1\beta W i \,dy \right\vert d\tau\\
&= \vert s(0)-s_n(0) \vert + \int_{0}^{t} \left\vert - s \int_0^1\beta W i \,dy +s_n \int_0^1 (\beta_n W_{\G_n} -\beta W) i \,dy +
s_n \int_0^1\beta W i_n \,dy \right\vert d\tau \\
&= \vert s(0)-s_n(0) \vert + \int_{0}^{t} \vert (s_n-s) \int_0^1\beta W i \,dy +s_n\int_0^1 (\beta_n W_{\G_n} -\beta W) i_n \,dy\\
&+s_n \int_0^1\beta W (i_n-i) dy \vert d\tau\\
&\leq \|s(0)-s_n(0) \|_{L^\infty} +\int_{0}^{t} \left[ K_w K_0 \left( \|s-s_n \|_{L^\infty} + \|i-i_n \|_{L^\infty} \right) +\|\beta W -\beta_n W_{\G_n}\|_{L^1} \right] d\tau.
\end{split}}}
\end{equation*}
Similarly, for the other components of~$(\mathbf{u}-\mathbf{u}_n)$ we obtain
\begin{equation*}
\begin{array}{l}
|e(t)-e_n(t)|
\leq \|e(0)-e_n(0) \|_{L^\infty}+\int_{0}^{t} \left[ K_w K_0 \left( \|s-s_n \|_{L^\infty} + \|i-i_n \|_{L^\infty} \right) \right.\\
\left.+\|\beta W -\beta_n W_{\G_n}\|_{L^1} \right] d\tau + \int_{0}^{t} K_0 \|e -e_n \|_{L^\infty} d\tau, \\ \\
|i(t)-i_n(t)| \leq \|i(0)-i_n(0) \|_{L^\infty} + \int_{0}^{t} K_0 \left( \|e-e_n \|_{L^\infty} + \|i-i_n \|_{L^\infty} \right) d\tau, \\ \\
|r(t)-r_n(t)| \leq \|r(0)-r_n(0) \|_{L^\infty} + \int_{0}^{t} K_0 \|i-i_n \|_{L^\infty} d\tau.
\end{array}
\end{equation*}
Let~$D(t)= \| \mathbf{u}(t) - \mathbf{u}_n(t) \|_{B_\infty}$,~$\hat{C}=\max \{ K_0,\,K_0K_W \}$, for~$t\in [0,T]$
\begin{equation*}
D(t) \leq \| \mathbf{u}(0) - \mathbf{u}_n(0) \|_{B_\infty} + T \| \beta W -\beta_n W_{\G_n}\|_{L^1} + \hat{C}\int_{0}^{t} D(\tau ) d\tau,
\end{equation*}
then, from Gronwall's Lemma,
\begin{equation*}
D(t) \leq \left[ \| \mathbf{u}(0) - \mathbf{u}_n(0) \|_{B_\infty} + T \| \beta W -\beta_n W_{\G_n}\|_{L^1} \right] (e^{\hat{C}t}-1).
\end{equation*}
The last term in the previous inequalities is bounded by the quantity~$(e^{\hat{C}T}-1)$, while~$\| \mathbf{u}(0) - \mathbf{u}_n(0) \|_{B_\infty} \rightarrow 0$, and
$\| \beta W -\beta_n W_{\G_n}\|_{L^1} \rightarrow 0$ as~$n\to+\infty$. Then~$D(t) \rightarrow 0$ uniformly with respect to time as~$n\to+\infty$.
\end{proof}
\begin{rem}
The conditions~\eqref{e:conv_tloc1} and \eqref{bound_deg_G} are satisfied under fairly reasonable assumptions. More precisely, assume that~$\{\G_n\}_{n\in \nn}$ is a sequence of graphs such that the corresponding sequence of graphons~$\{W_{\G_n}\}_{n \in \nn}$ is uniformly bounded in~$L^\infty$ and non-negative. Then there is a graphon~$W$ and a re-labelling of the vertices of~$\G_n$ such that~\eqref{e:conv_tloc1} holds. Note furthermore that any sequence of graphs with uniformly bounded adjacency matrices satisfies also \eqref{bound_deg_G}. One could also consider the case of time-dependent graphs, see~\cite{ADS} for the precise statement in this case.
\end{rem}
\subsection{Convergence for deterministic and random samplings\label{ss:samplings}}
Before introducing our next result, we make some heuristic considerations. Theorem~\ref{t:grafoni1} in the previous paragraph describes the continuum limit of~\eqref{SIR_G}. In particular, we can conclude that we have the convergence result for any sequence of graphs such that the corresponding sequence of graphons is uniformly bounded. In this sense, the statement of Theorem~\ref{t:grafoni1} goes ``from the sequence of graphs to the limit graphon".
\begin{figure}[t]
\centering
\begin{tabular}{c|c}
(a)\includegraphics[height=0.23\textwidth]{./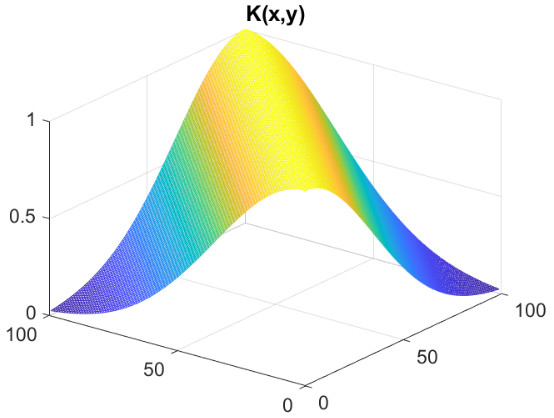}
&(b)\includegraphics[height=0.23\textwidth]{./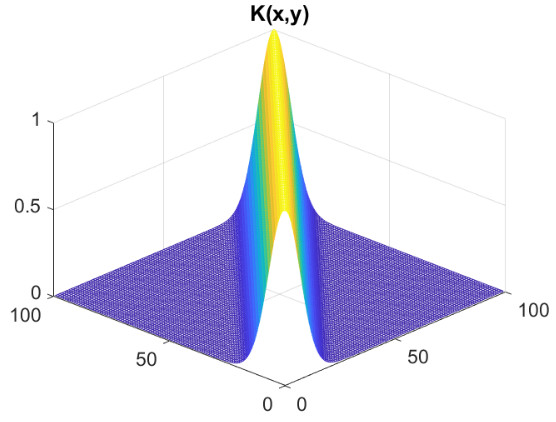}
\end{tabular}
\begin{tabular}{ccc}
\hline
\includegraphics[height=0.23\textwidth]{./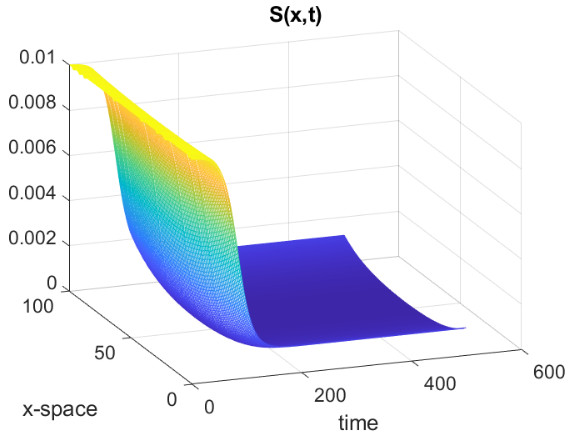}
&\includegraphics[height=0.23\textwidth]{./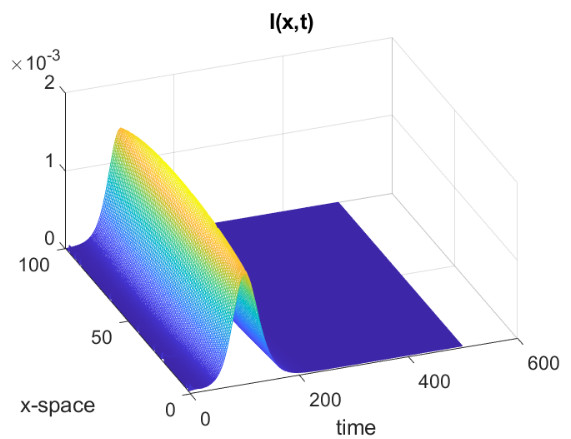}
&\includegraphics[height=0.23\textwidth]{./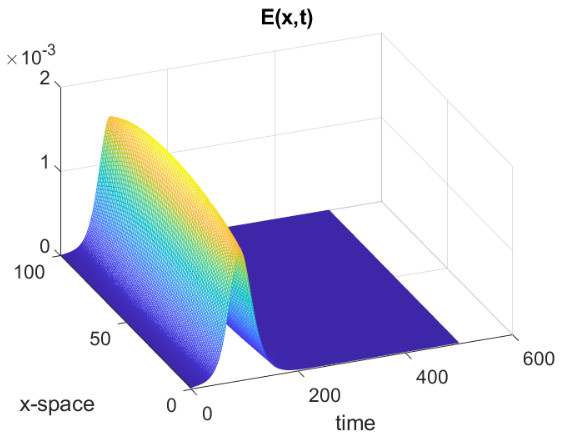}\\
(a) & &\\
\hline
\includegraphics[height=0.23\textwidth]{./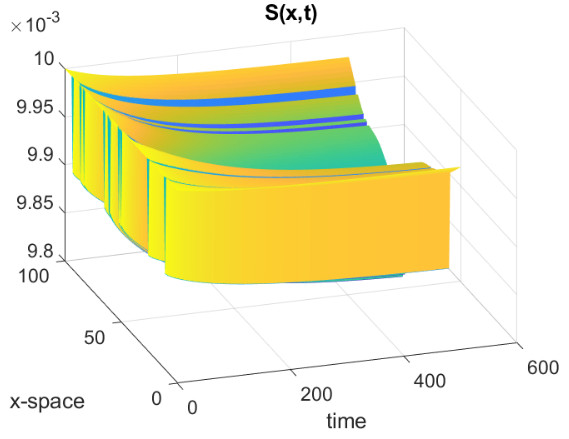}
&\includegraphics[height=0.23\textwidth]{./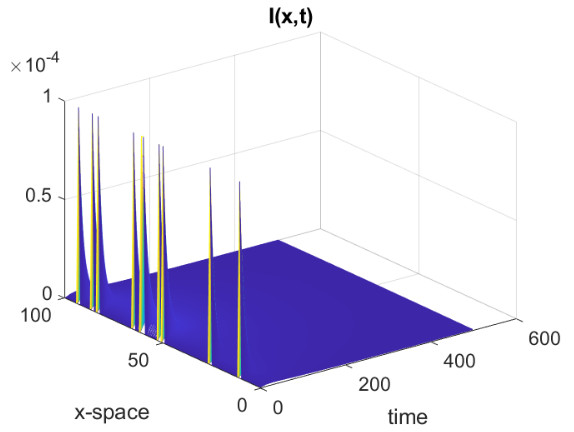}
&\includegraphics[height=0.23\textwidth]{./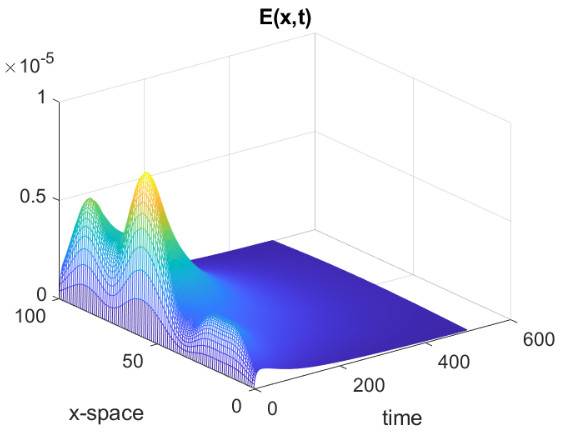}\\
(b) & &
\end{tabular}
\caption{A simple example: discretisation with~$n=100$ points: (a,b) selected Gaussian Kernel~$W_G(x,y)$, see \eqref{eq:graphon-gaussian}, and behaviour of the variables (b)~$S(x,t)$,~$I(x,t)$, and~$E(x,t)$ computed with a forward Euler scheme in time and composite mid-point rule for the integral.\label{fig:2D-SEIR-TEST}}
\end{figure}

We now investigate the reverse path ``from the limit graphon to the approximating sequence of graphs". More precisely, we fix a limit graphon~$W$ and we ask ourselves whether or not one can construct approximating sequences of graphs, initial data and coefficients in such a way that the corresponding solutions of~\eqref{SIR_G} provide a good approximation of the solution of~\eqref{SIR_W}. Our next result reports a possible answer to the above question.
\begin{theorem}
		\label{t:grafoni2}
		Let~$W\in L^\infty([0,1]^2)$ satisfy~$W \ge 0$ a.e. on~$[0, 1]^2$ and
		$$
		 \esssup_{x \in [0, 1]} \int_0^1 W(x, y) dy \leq K
		$$
		for some suitable constant~$K>0$. Assume that ~$\beta,\mu,\gamma\in L^\infty\left(\rr^+\times[0,1]\right)$ satisfy~$\beta, \mu, \gamma \ge 0$ and set
		\begin{equation} \label{e:betagamma}
 \beta_j^n(t):=\langle\beta (t, \cdot) \rangle_{I^n_j },\quad
 \mu_j^n(t):=\langle\mu (t, \cdot) \rangle_{ I^n_j}, \gamma_j^n(t):=\langle\gamma (t, \cdot) \rangle_{ I^n_j},
 \quad \text{for every~$j=1, \dots, n$},
		\end{equation}
		where~$\langle\beta \cdot \rangle_{I^n_j }$ denotes the average of the function on the set~$I^n_j : =[(j-1)n^{-1}, jn^{-1}[$. Assume furthermore that~$s_0, e_0, i_0,r_0\in L^\infty([0,1];[0,1])$ satisfy~$s_0+e_0+i_0+r_0=1$ a.e. in~$[0,1]$. Set
		\begin{equation*} \label{e:idata}
	 	s_{0,j}^n:=\langle s_0\rangle_{I^n_j}, \quad e_{0,j}^n:=\langle e_0\rangle_{I^n_j},
 \quad i_{0,j}^n:=\langle i_0\rangle_{I^n_j},
 \quad r_{0,j}^n:=\langle r_0\rangle_{I^n_j}.
		\end{equation*}
Then there is a deterministic and a random sampling of the graphon~$W$ such that, for any~$T>0$,
\begin{equation} \label{e:strongconv}
\left\|\normalfont(\svet^n,\evet^n,\ivet^n,\rvet^n)-(s,e,i,r)\right\|_{C^0(0,T;L^2([0,1],[0,1]^4))} \to0 \quad \text{ as }n\to+\infty\,,
\end{equation}
where~$(\normalfont\svet^n,\evet^n,\ivet^n,\rvet^n):\rr^+ \to[0,1]^4$ solves \eqref{SIR_G} and~$s,e,i,r:[0,T]\times[0,1]\to[0,1]$ satisfies \eqref{SIR_W} in the sense of distributions. The convergence result in in~\eqref{e:strongconv} holds almost surely in the case of random sampling.
\end{theorem}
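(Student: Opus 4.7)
The plan is to reduce the statement to (a minor $L^2$-adaptation of) Theorem~\ref{t:grafoni1}, after exhibiting concrete deterministic and random samplings of $W$ and checking that all the hypotheses are matched. For the \emph{deterministic sampling}, I would set
\[
a_{jk}^n := n^{2}\int_{I_j^n\times I_k^n} W(x,y)\,dx\,dy = \langle W\rangle_{I_j^n\times I_k^n},
\]
so that the associated graphon $W_{\G_n}$ is exactly the conditional expectation of $W$ onto the product partition $\{I_j^n\times I_k^n\}_{j,k}$. A standard martingale / Lebesgue differentiation argument then gives $W_{\G_n}\to W$ in $L^p([0,1]^2)$ for every $1\le p<+\infty$, and in particular in $L^1$. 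The bound $n^{-1}\sum_{k=1}^n a_{jk}^n = \int_0^1 W_{\G_n}(x,y)\,dy \le K$ for any $x\in I_j^n$ follows at once from the $\esssup$ hypothesis and matches~\eqref{bound_deg_G}. The averages~\eqref{e:betagamma} and the averaged initial data converge, by the same argument, strongly in $L^1(\mathbb{R}^+\times [0,1])$ (for the parameters) and in $L^2([0,1])$ (for the initial data, although not in $L^\infty$ in general).

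For the \emph{random sampling}, I would draw i.i.d.\ uniform samples $X_1,\dots,X_n$ on $[0,1]$, relabel them in increasing order, and set $a_{jk}^n:=W(X_j,X_k)$; the parameters and initial data are sampled analogously as $\beta_j^n(t):=\beta(t,X_j)$, $\mu_j^n(t):=\mu(t,X_j)$, $\gamma_j^n(t):=\gamma(t,X_j)$, $s_{0,j}^n:=s_0(X_j)$, and so on. Classical graphon-theoretic arguments (see~\cite{LO12,BCLSV08}) based on Hoeffding concentration and a Borel--Cantelli step give $\|W_{\G_n}-W\|_{\Box}\to 0$ almost surely; since $W$ is bounded and non-negative, this upgrades to $\|W_{\G_n}-W\|_{L^1}\to 0$ almost surely. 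A Glivenko--Cantelli-type argument applied to the sampled parameters and initial data disposes of the remaining convergences off a single exceptional null set.

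Once the inputs are in place, I would revisit the Gronwall chain at the end of the proof of Theorem~\ref{t:grafoni1}, replacing the $B_\infty$ estimates by $L^2$ estimates. The a priori bound $0\le s,e,i,r,s_n,e_n,i_n,r_n\le 1$, obtained by the cone-invariance argument already used in the proof of Theorem~\ref{teo:SEIR:W}, makes it possible to split
\[
s\int_0^1\!\beta W\,i\,dy - s_n\int_0^1\!\beta_n W_{\G_n}\,i_n\,dy
\]
into three pieces, two of which are controlled pointwise in $x$ by $|s-s_n|$ and a uniform bound on $\int\beta W i\,dy$, together with a cross piece controlled by $\|i-i_n\|_{L^2}$ and the $L^1$-residual $\|\beta W-\beta_n W_{\G_n}\|_{L^1}$. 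Setting
\[
E(t):=\|(s-s_n,e-e_n,i-i_n,r-r_n)(t,\cdot)\|_{L^2([0,1];\mathbb{R}^4)}^{2},
\]
the resulting integral inequality
\[
E(t) \le C\,E(0) + C\int_0^t E(\tau)\,d\tau + R_n(T),
\]
with $R_n(T)\to 0$ (almost surely in the random case), yields~\eqref{e:strongconv} by Gronwall's lemma.

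The delicate point, and the main obstacle I expect, is the uniformity in time of the random convergence: one must secure a \emph{single} exceptional null set on which the sampled parameters $\beta(\cdot,X_j),\mu(\cdot,X_j),\gamma(\cdot,X_j)$ simultaneously converge to $\beta,\mu,\gamma$ in $L^1(\mathbb{R}^+\times[0,1])$. When the coefficients are merely $L^\infty$ in time this forces an approximation step: first pass through a countable dense family of coefficients that are continuous in time, for which a pointwise LLN applied on a dense sequence of time slices together with uniform boundedness gives the claim, and then conclude in general by a density/diagonal argument in $L^1$. Putting all this together closes the proof.
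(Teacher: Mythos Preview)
The paper does not actually prove Theorem~\ref{t:grafoni2}: immediately after the statement it says that ``the proof of the above result and the explicit construction of the deterministic and random sampling is provided in~\cite{ADS} and is inspired by the argument in~\cite{Med19}.'' So there is nothing to compare against line by line; one can only check your proposal against the Medvedev-type template the paper points to. At that level your plan is the right one: average-over-cells for the deterministic sampling, point-evaluation at i.i.d.\ uniforms for the random sampling, and then an $L^2$ rerun of the Gronwall chain from Theorem~\ref{t:grafoni1} because the averaged initial data converge only in $L^p$, $p<\infty$, not in $L^\infty$. That is exactly the skeleton of~\cite{Med19}.

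Two genuine issues, though.

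\textbf{(1) You redefine quantities the statement has already fixed.} In the random case you set $\beta_j^n(t):=\beta(t,X_j)$, $s_{0,j}^n:=s_0(X_j)$, etc. But the theorem \emph{prescribes} these as the interval averages~\eqref{e:betagamma}; only the sampling of $W$ (the $a_{jk}^n$) is yours to choose. If you use the averages, as the statement requires, then $\beta_n\to\beta$, $\mu_n\to\mu$, $\gamma_n\to\gamma$ and the initial data all converge \emph{deterministically}, and the ``delicate point'' you flag at the end --- securing a single null set for the time-dependent random coefficients --- simply does not arise. So your hardest paragraph is spent on a difficulty created by misreading the hypotheses.

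\textbf{(2) The cut-to-$L^1$ upgrade is not valid at the generality you claim.} You write that $\|W_{\G_n}-W\|_\Box\to 0$ a.s.\ ``upgrades'' to $\|W_{\G_n}-W\|_{L^1}\to 0$ because $W$ is bounded and non-negative. That implication is false in general; as the paper itself recalls (citing~\cite[Prop.~8.24]{LO12}), the upgrade holds when the \emph{limit} graphon is $\{0,1\}$-valued, which is not assumed here. For the weighted sampling $a_{jk}^n=W(X_j,X_k)$ the correct route --- and the one used in~\cite{Med19} --- is a direct law-of-large-numbers/second-moment computation showing $\|W_{\G_n}-W\|_{L^2}\to 0$ a.s., bypassing the cut norm entirely. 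Your Gronwall step only needs an $L^1$ (or $L^2$) residual on $\beta W-\beta_n W_{\G_n}$, so this direct argument is both simpler and sufficient.

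With these two corrections your outline goes through and matches the approach the paper defers to.
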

Note that the convergence result in~\eqref{e:strongconv} is stronger than the one in~Theorem~\ref{t:grafoni1}. The proof of the above result and the explicit construction of the deterministic and random sampling is provided in~\cite{ADS} and is inspired by the argument in~\cite{Med19}. In the case of deterministic sampling, we can also handle the time-dependent case, that is~$W$ can also depend on the time variable. Also, through proper rescaling and suitable further assumptions, we can consider the case where~$W\in L^2 ([0,1]^2)$, see~\cite{ADS} for the technical details. To conclude, we point out that Theorem~\ref{t:grafoni2} may be relevant in the view of numerical considerations, and indeed in~\cite{ADS} we provide several numerical experiments that use the sampling constructed in the proof of Theorem~\ref{t:grafoni2}.
\begin{figure}[t]
\centering
\includegraphics[height=0.25\textwidth]{./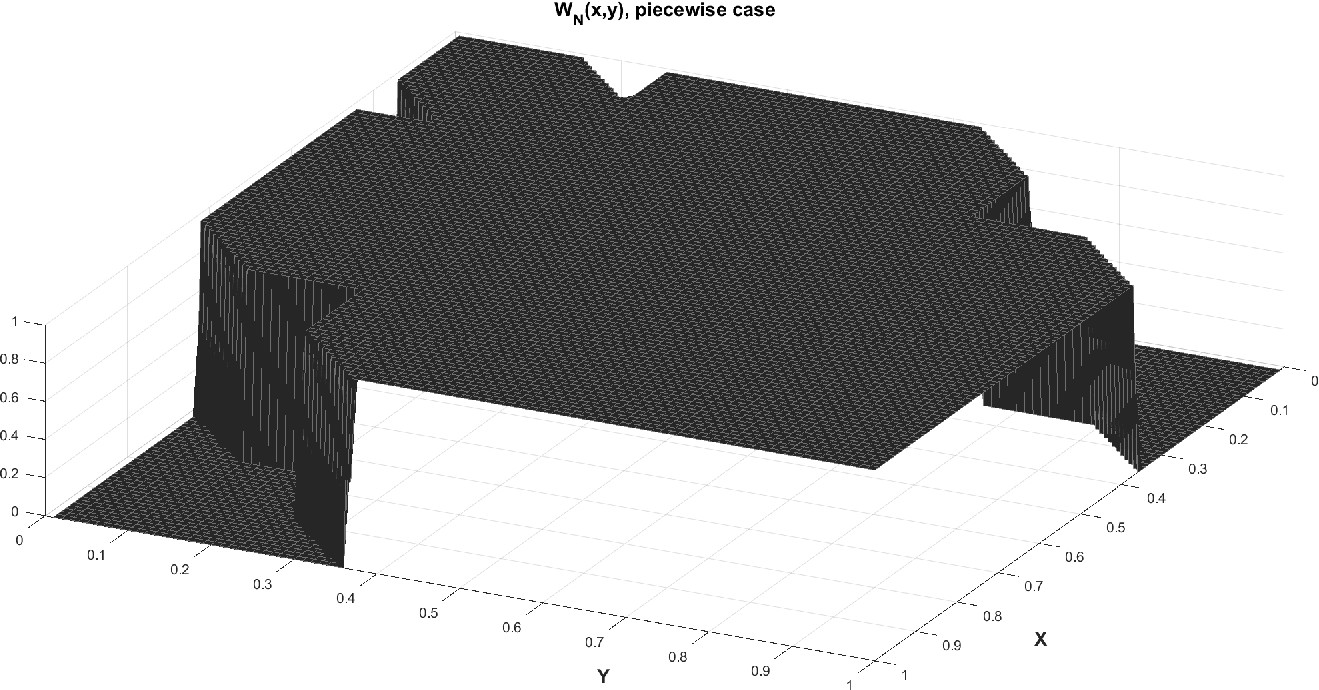}%
\includegraphics[height=0.25\textwidth]{./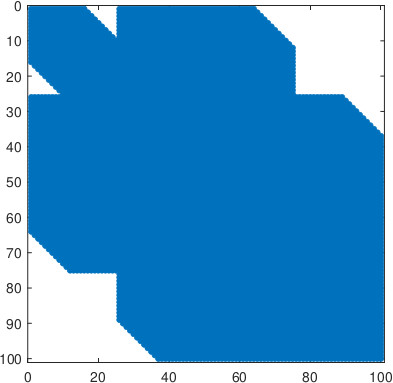}
\caption{A simple example of a piecewise graphon (or step graphon) for a block model of the epidemic spread.\label{fig:2D-BLOCK-MODEL}}
\end{figure}
\subsection{Semidiscretization of the SEIR model on a graphon}\label{sec:semid}
We consider the graphon operator
\begin{equation}\label{def:Tw}
(T_Wf)(x)=\int_0^1 W(x,y) f(y) dy, \qquad f \in L^2 ([0, 1]),
\end{equation}
which enjoys the following properties.
\begin{lemma}\label{l:compact}
Assume~$W \in L^2([0, 1]^2)$, then the map~$T_W$ defined by~\eqref{def:Tw} is a linear, bounded and compact operator from~$L^2([0, 1])$ to~$L^2 ([0, 1])$.
\end{lemma}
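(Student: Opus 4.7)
The plan is to verify the three properties (linearity, boundedness, compactness) separately, relying on the classical Hilbert--Schmidt structure of the integral operator $T_W$.

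Linearity is immediate from the linearity of the integral in $f$, so I would dispatch it in a single sentence. For boundedness, I would apply the Cauchy--Schwarz inequality pointwise in $x$: for a.e.\ $x \in [0,1]$,
\begin{equation*}
 |(T_W f)(x)|^2 \leq \Bigl(\int_0^1 W(x,y)^2\,dy\Bigr)\Bigl(\int_0^1 f(y)^2\,dy\Bigr).
\end{equation*}
Integrating in $x$ over $[0,1]$ and using Fubini gives $\|T_W f\|_{L^2}^2 \leq \|W\|_{L^2([0,1]^2)}^2 \|f\|_{L^2}^2$, so $T_W$ is a bounded linear operator with norm at most $\|W\|_{L^2([0,1]^2)}$.

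For compactness, I would use the standard density/approximation argument that makes $T_W$ into a Hilbert--Schmidt operator. Fix an orthonormal basis $\{\varphi_k\}_{k \in \mathbb{N}}$ of $L^2([0,1])$; then $\{\varphi_i(x)\varphi_j(y)\}_{i,j}$ is an orthonormal basis of $L^2([0,1]^2)$, and one may expand
\begin{equation*}
 W(x,y) = \sum_{i,j=1}^\infty c_{ij} \varphi_i(x) \varphi_j(y), \qquad \sum_{i,j} |c_{ij}|^2 = \|W\|_{L^2([0,1]^2)}^2 < \infty.
\end{equation*}
Define the truncated kernel $W_N(x,y) := \sum_{i,j=1}^N c_{ij} \varphi_i(x)\varphi_j(y)$, so that $T_{W_N}$ has finite rank (at most $N$) and is therefore compact. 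By the boundedness estimate already proved, applied to the kernel $W - W_N$,
\begin{equation*}
 \|T_W - T_{W_N}\|_{\mathrm{op}} \leq \|W - W_N\|_{L^2([0,1]^2)} \longrightarrow 0 \quad \text{as } N \to \infty.
\end{equation*}
Since the space of compact operators on a Hilbert space is closed under the operator norm, $T_W$ itself is compact.

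The main technical point is the compactness step, though it is routine once one recognises $T_W$ as Hilbert--Schmidt; the only mild subtlety is to make sure the truncated kernels are genuinely finite rank (which is clear from the separated-variables form) and to cite the closedness of $\mathcal{K}(L^2)$ in operator norm. No step should present a serious obstacle given the hypothesis $W \in L^2([0,1]^2)$.
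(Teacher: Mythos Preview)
Your proof is correct. The boundedness step is essentially identical to the paper's (they phrase it as H\"older, you as Cauchy--Schwarz, which is the same thing here). For compactness, however, you take a genuinely different route.

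The paper argues compactness directly by sequences: given a bounded sequence $\{f_n\}$ in $L^2([0,1])$, it extracts a weakly convergent subsequence $f_n \rightharpoonup f$, observes that $(T_W f_n)(x) \to (T_W f)(x)$ for a.e.\ $x$ (since $W(x,\cdot)\in L^2$ for a.e.\ $x$ by Fubini, and weak convergence is tested against $L^2$ functions), and then upgrades this to strong $L^2$ convergence via the dominating function $x\mapsto \|W(x,\cdot)\|_{L^2}\sup_n\|f_n\|_{L^2}$ and the Lebesgue Dominated Convergence Theorem. You instead recognise $T_W$ as a Hilbert--Schmidt operator and approximate $W$ in $L^2([0,1]^2)$ by separable finite sums, obtaining finite-rank operators converging to $T_W$ in operator norm. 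Your argument is the textbook Hilbert--Schmidt proof and yields a bit more structure (e.g.\ the Hilbert--Schmidt norm of $T_W$ equals $\|W\|_{L^2}$), while the paper's argument is more self-contained in that it avoids invoking an orthonormal basis of $L^2([0,1]^2)$ and the closedness of the compact operators. Either is perfectly adequate for the lemma as stated.
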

\begin{proof}
We first point out that, owing to the H\"older inequality,
\begin{equation*}
\begin{split}
&\| T_W f \|^2_{L^2([0, 1])} = \int_0^1 \left( \int_0^1 W(x, y) f(y) dy\right)^2 dx\\
&\leq\int_0^1 \| W(x, \cdot)\|^2 _{L^2([0, 1])}\| f\|^2 _{L^2([0, 1])} dx
= \| W\|^2 _{L^2([0, 1]^2)}\| f\|^2 _{L^2([0, 1])},
\end{split}
\end{equation*}
and this shows that the linear map~$T_W$ attains values in~$L^2([0, 1])$ and that it is a bounded (and henceforth continuous) operator.
\begin{figure}[t]
\centering
\begin{tabular}{c|c}
(a)\includegraphics[height=0.35\textwidth]{./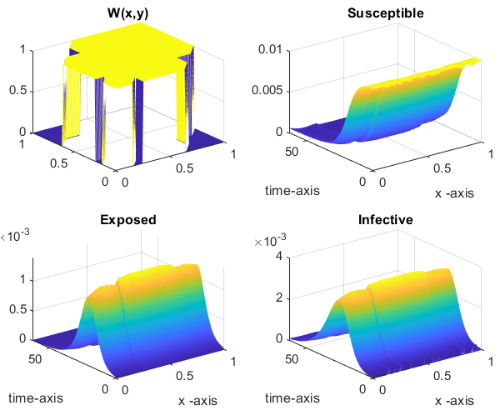}
&(b)\includegraphics[height=0.35\textwidth]{./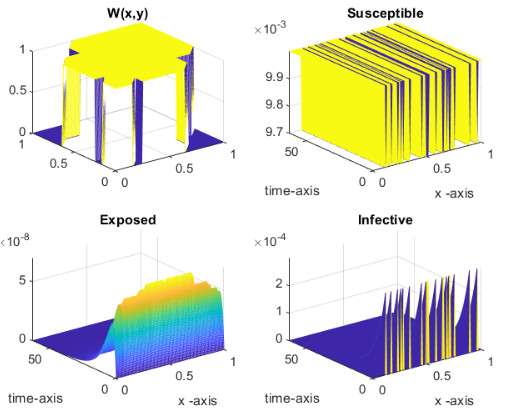}
\end{tabular}
\caption{A simple block model with a step graphon for which (a) the epidemic spreads and (b) does not spread.\label{fig:2D-SEIR-BLOCK-YES}}
\end{figure}

We are left to show that~$T_W$ maps bounded sets in relatively compact sets. Let us fix a bounded set~$B \subset L^2([0, 1])$ and a sequence~$\{ T_W f_n\}_{n \in \mathbb N}$ with~$\{ f_n \}_{n \in \mathbb N} \subset B$. Up to subsequences (which we do not relabel), the sequence~$f_n$ weakly converges in~$L^2([0, 1])$ to some limit function~$f$. To conclude, it suffices to show that~$T_W f_n$ strongly converges to~$T_W f$ in~$L^2([0, 1])$. First, we point out that~$(T_W f_n)(x)$ pointwise converges to~$(T_W f)(x)$ for a.e.~$x \in [0, 1]$. Next, we use again the H\"older inequality and get
$$
 \vert (T_W f )(x)\vert = \left\vert \int_0^1 W(x, y) f_n(y) dy \right\vert
 \leq
 \| W(x, \cdot)\|_{L^2([0, 1])}
 \| f_n \| _{L^2([0, 1])},
$$
for a.e.~$x \in [0, 1]$.
Since the sequence~$\{ f_n \}_{n \in \mathbb N}$ is bounded in~$L^2([0, 1])$, then the right hand side of the above inequality belongs to~$L^2([0, 1])$. By the Lebesgue Dominated Convergence Theorem, we conclude that~$T_W f_n$ strongly converges to~$T_W f$ in~$L^2([0, 1])$.
\end{proof}
To simulate the SEIR model on a graphon, see system (\ref{SIR_W}), we first consider a discretisation for the space variable~$x$ compatible with the result in the Theorem~\ref{t:grafoni2}. Then, starting from the uniform partition of the set~$[0,1]$ with the intervals~$\mathbbm{1}_{I_j^n}$, where~$I_j^n=[ (j-1)n^{-1}, jn^{-1}[$,~$j=1,\ldots,n$, we approximate the Graphon operator is defined by~\eqref{def:Tw} by the composite midpoint rule,
\begin{equation}\label{def:midpoint}
(\bar{T}_Wf)(x_k)=\frac{1}{n}\sum_{j=1}^{n} W(x_k,x_j) f(x_j),
\end{equation}
where~$x_k=(2k-1)/n$,~$k=1,\ldots,n$. The corresponding semi-discrete scheme is the following
\begin{equation}
		\label{SIR_W_semi}
		\begin{cases}
		\partial_t s(t,x_k)=-s(t,x_k)\frac{1}{n}\sum_{j=1}^{n} \beta(t,x_j)W(x_k,x_j)i(t,x_j), & \\
		\partial_t e(t,x_k)=s(t,x_k),
 \frac{1}{n}\sum_{j=1}^{n} \beta(t,x_j)W(x_k,x_j)i(t,x_j),\\
		\partial_ti(t,x_k)= \mu(t,x_k)e(t,x_k)-\gamma(t,x_k)i(t,x_k), & \\
 \partial_tr(t,x_k)=\gamma(t,x_k)i(t,x_k), & \\
		% s(t,x)+i(t,x)+r(t,x)=1\,,\qquad\qquad\forall t\geq0\,, & \\
		s(0,x_k)=s_0(x_k),\,e(0,x_k)=e_0(x_k),\,i(0,x_k)=i_0(x_k),\,r(0,x_k)=r_0(x_k),
		\end{cases}
	\end{equation}	
\mbox{$k=1,\ldots,n,$}. If the probability of disease transmission per contact parameter~$\beta$ is constant, the same results of the Lemma~\ref{Teo:seir1}, and Theorem~\ref{Teo:seir2} hold for the solution of the semi-discrete system (\ref{SIR_W_semi}). It follows that the dynamics of the transmission of the epidemic are determined in particular by the values of the eigenvalues of the operator~$\bar{T}_W$. We point out that~$W(x_k,x_j)$ represents a piecewise constant graphon, and then we focus on the spectral properties of the operator~$T_W$ corresponding to these last types of graphon. In the following, we assume~$W\in\cal{W}_0$, where~$\cal{W}_0$ is the set of all graphons attaining values in~$[0, 1]$.
%
%\begin{figure}[t]
%\centering
%\includegraphics[height=0.5\textwidth]{./3D-GEOSIR-IMAGES/no-epidi.jpg}
%\caption{A simple block model with a step graphon for which epidemic does not spread.\label{fig:2D-SEIR-BLOCK-NO}}
%\end{figure}

Owing to Lemma~\ref{l:compact}, the operator~$T_W$ has a discrete spectrum, i.e. a countable multi-set of nonzero eigenvalues~$\{\lambda_1,\, \lambda_2,\ldots \}$ such that~$\lambda_n\rightarrow 0$. In particular, every nonzero eigenvalue has a finite multiplicity. If we consider the case of spreading epidemics in meta-populations, a natural community structure is the stochastic block model where each block represents a region, a city, a village, or other subsets of the population. The step graphons, related to a suitable partition of~$[0,1]$, with finite rank, encompass the properties of the stochastic block models. Now, the advantage provided by graphons lies in the fact that instead of considering a network with a large number of nodes, we consider a reduced model that maintains the same properties. A suitable graphon for a block models is a step function~$W$ such that a partition~$P=\{P_1,\, P_2,\ldots ,P_n\}$ of measurable sets of~$[0,1]$ exists, and~$W$ is constant on every product set~$P_i\times P_j$. Here we will consider the case of uniform partition with sub-intervals~$I_j^n$ defined above.

A graphon~$W$ can be used to generate a random graph using a sampling method, e.g. see~\cite{LO12}. Let~$N\in\mathbb{N}$, we say that the graph~$G$ is sampled from~$W$ if it is obtained through:
\begin{enumerate}
\item fixing deterministic latent variables~$\{u_i=\frac{i}{N} \}$,~$i=1,\, 2, \ldots N$;
\item taking~$N$ vertices~$\{1,\,2, \ldots ,\,N \}$ and randomly adding undirected edges between vertices~$i$ and~$j$ independently with probability
$W(u_i,u_j)$ for all~$i>j$.
\end{enumerate}
We can define the operator norm,
\begin{equation*}\label{eq:op:norm}
|||T_W||| : = \underset{\underset{\| f\|_{L^2}=1}{f\in L^2[0,1]}}{\sup}\,\| T_Wf \|_{L^2} .
\end{equation*}
Let~$T_{W_G}$ the graphon operator related to a graph sampled from~$W$, following ~\cite{APSS2020} it is possible to obtain an asymptotic estimate of the difference between the graphons in the operator norm. In particular, for a fixed value~$0< \nu < 1/e$ with probability~$1-\nu$,
\begin{equation}\label{est:prob:Tw}
|||T_W- T_{W_G}||| = O((\log{N}/N)^{1/2})\,\,\,N\rightarrow \infty .
\end{equation}
For graphons in~$\cal{W}_0$, the operator norm is equal to the largest, in modulus, eigenvalue, which is also real and positive, of the operator~$\lambda_1(T_W)$:
$|||T_W||| = \lambda_1(T_W)$. Then, using (\ref{est:prob:Tw}), with probability at least~$1-\nu$
\begin{equation*}\label{est:prob:lam}
\vert\lambda_1(T_W)-\lambda_1(T_{W_G})\vert=O((\log{N}/N)^{1/2})\,\,\,N\rightarrow \infty ,
\end{equation*}
and we have an approximation for the condition governing the transmission of the epidemic on the graph~$G$ obtained from~$W$. Moreover, in general, for a graphon~$W$, the eigenvalues form two sequences~$\mu_1(W)\geq \mu_2(W)\geq \ldots \geq 0$, and~$\mu^\prime_1(W)\leq \mu^\prime_2(W)\leq \ldots \leq 0$ converging to zero. Let~$\{ W_n \}_{n=1}^\infty$ be a sequence of uniformly bounded graphons, converging in the cut norm to a graphon~$W$. Then, see~\cite{BCLSV12}, for every~$i\geq 1$,
\begin{equation*}\label{eq:limit:lam}
\lim_{n\rightarrow\infty}\mu_i(W_n)=\mu_i(W),\,\,\,\lim_{n\rightarrow\infty}\mu^\prime_i(W_n)=\mu^\prime_i(W).
\end{equation*}
This implies that if a sequence of graphs converges in the cut norm to a graphon with a simple spectral characterisation by a few non-zero eigenvalues, as is the case for step graphon, then the sequence of graphs admits a simple low-dimensional spectral approximation. Furthermore, if the size of the graphs in the sequence is increasing then the low-dimensional approximation performs better. As a consequence, if a step graphon represents sufficiently well the social structure related to the spread of an epidemic, we can consider a low-dimensional model for an approximation of the behaviour of the phenomena on a large underlying network.

\subsection{Epidemic spread on a graphon: numerical examples\label{sec:ER}}
We conclude by showing some numerical simulations to illustrate some cases of the behaviour of a SEIR model on a (discrete) graphon. The numerical simulations were done using the semi-discrete system (\ref{SIR_W_semi}) coupled with the simple forward Euler scheme with a constant time step~$\tau$ for integration in time. We point out that more accurate and advanced schemes can be considered with an appropriate stability and convergence analysis. This type of extension and analysis goes beyond the scope of this work and we refer, for example, to~\cite{ADS} and our forthcoming work for more details. As a first example, we consider a Gaussian kernel like graphon~$W(x,y)$,
\begin{equation}\label{eq:graphon-gaussian}
W_G(x,y) = C_W e^{-\frac{1}{2}\left(\frac{(x-x_0)^2}{\sigma_x^2}+ \frac{(y-y_0)^2}{\sigma_y^2} \right)},
\end{equation}
where~$C_W$ is a suitable parameter in order to normalise the values of~$W(x,y)$, and~$(x_0,y_0)$ are the points on the diagonal of the square~$[0,1]\times [0,1]$. In all the simulations that we will show we have used a uniform partition of~$[0,1]$ with~$n=100$ subintervals. The parameters for the SEIR model are the following, from~\cite{GPV2020} and about the recent epidemic of SARS-CoV-2 in Italy,~$\beta=0.74$,~$mu=0.5$,~$\gamma=0.14$. In Fig.,~\ref{fig:2D-SEIR-TEST}, we represent the numerical results for two cases, controlled by a Gaussian graphon ($\sigma_x=\, \sigma_y = 0.5$): (a) spread, (b) no spread of the epidemic. In the second numerical example we consider a simple block model, the corresponding graphon is a piecewise function which is represented graphically in Fig.,~\ref{fig:2D-BLOCK-MODEL}. We perform a numerical simulation such that the condition~$\beta \lambda_1 (T_W) > \gamma$ is satisfied (compare with the Theorem~\ref{Teo:seir2}) and the epidemic spread as in the classical SEIR models (Fig.,~\ref{fig:2D-SEIR-BLOCK-YES}(a)). Finally, in Fig.,~\ref{fig:2D-SEIR-BLOCK-YES}(b) a numerical test with a block model is shown but without the spread of the epidemic.

\section{Conclusions and future work\label{sec:majhead}}
We have considered SEIR models defined on graphs, and discussed their basic features and in particular the spectral properties of the weighted adjacency matrix. We have also investigated the continuum limit obtained by considering graphs with a diverging number of nodes. This analysis provides a useful step in understanding the spread of diseases in populations with complex social structures. Possible further developments include the development of inferential methods for data on the emerging phase of epidemics, the extension of metapopulation models to more complex forms of human social structure, the development of metapopulation models able to capture the spatial population structure, the development of computationally efficient methods for calculating key epidemiological model quantities, and the integration of within- and between-host dynamics in models. In this work, we have also provided some preliminary experiments to support the possibility of approximating the dynamics of the epidemic for a large network by considering a low-dimensional approximation. More precisely, we focus on the case of the block model corresponding to a piecewise constant graphon. 
we point out that the extension to the case of directed graphs becomes essential to deal with realistic cases. However, this implies a revision of the theory of the graphons  considering non-symmetric kernels.

In future work, we will use this approximation to study control problems. A simple model in the case of a heterogeneous network was introduced in~\cite{ABN2021} while the linear case for a graphon has been done, for instance, in~\cite{GC2020}, we plan to analyse the non-linear case. Moreover, in some applications, it is necessary to estimate an underlying graphon to perform some network analysis. Some non parametric estimation methods have been proposed, and some are provably consistent. However, if certain useful features of the nodes (e.g. age, social group, health information) are available, it is possible to incorporate this source of information to help with the estimation, see e.g.~\cite{SWL2020}, using both the adjacency matrix and node features. 

Finally, a recent model for the spread of opinion on a graphon was introduced and analysed in~\cite{AN2022} using a~$L^2$ approach both for the convergence of the discrete system to the continuous one and for the analysis of the continuous problem. In this way, analytical solutions are available in the piecewise graphon case. We want to extend this type of approach also to the case of epidemiological models.\\

\textbf{Acknowledgements}
We thank the Reviewers for their thorough review and constructive comments, which helped us to improve the technical part and presentation of the revised paper. The authors would also like to thank Simone Dovetta (POLITO), Laura Spinolo (CNR-IMATI), and several colleagues that have given us helpful suggestions and advice; in particular, the COVID-19 modelling group organised by the CNR-IMATI. G. Naldi acknowledges the support of the project DIT.AD021.001 (Numerical Analysis and Scientific Computing) of IMATI-CNR (Italy).
\textbf{Conflict of interest statement} On behalf of all authors, the corresponding author states that there is no conflict of interest.

\end{document}